\documentclass[reqno,a4paper]{amsart}
\footskip = 30pt 
\setlength{\parindent}{0pt} 
\setlength{\parskip}{6pt plus 2pt minus 1pt}
\usepackage{fancyhdr}
\usepackage{etoolbox}
%
\makeatletter
\let\old@tocline\@tocline
\let\section@tocline\@tocline
\newcommand{\subsection@dotsep}{4.5}
\newcommand{\subsubsection@dotsep}{4.5}
\patchcmd{\@tocline}
  {\hfil}
  {\nobreak
     \leaders\hbox{$\m@th
        \mkern \subsection@dotsep mu\hbox{.}\mkern \subsection@dotsep mu$}\hfill
     \nobreak}{}{}
\let\subsection@tocline\@tocline
\let\@tocline\old@tocline

\patchcmd{\@tocline}
  {\hfil}
  {\nobreak
     \leaders\hbox{$\m@th
        \mkern \subsubsection@dotsep mu\hbox{.}\mkern \subsubsection@dotsep mu$}\hfill
     \nobreak}{}{}
\let\subsubsection@tocline\@tocline
\let\@tocline\old@tocline

\let\old@l@subsection\l@subsection
\let\old@l@subsubsection\l@subsubsection

\def\@tocwriteb#1#2#3{%
  \begingroup
    \@xp\def\csname #2@tocline\endcsname##1##2##3##4##5##6{%
      \ifnum##1>\c@tocdepth
      \else \sbox\z@{##5\let\indentlabel\@tochangmeasure##6}\fi}%
    \csname l@#2\endcsname{#1{\csname#2name\endcsname}{\@secnumber}{}}%
  \endgroup
  \addcontentsline{toc}{#2}%
    {\protect#1{\csname#2name\endcsname}{\@secnumber}{#3}}}%

\newlength{\@tocsectionindent}
\newlength{\@tocsubsectionindent}
\newlength{\@tocsubsubsectionindent}
\newlength{\@tocsectionnumwidth}
\newlength{\@tocsubsectionnumwidth}
\newlength{\@tocsubsubsectionnumwidth}
\newcommand{\settocsectionnumwidth}[1]{\setlength{\@tocsectionnumwidth}{#1}}
\newcommand{\settocsubsectionnumwidth}[1]{\setlength{\@tocsubsectionnumwidth}{#1}}
\newcommand{\settocsubsubsectionnumwidth}[1]{\setlength{\@tocsubsubsectionnumwidth}{#1}}
\newcommand{\settocsectionindent}[1]{\setlength{\@tocsectionindent}{#1}}
\newcommand{\settocsubsectionindent}[1]{\setlength{\@tocsubsectionindent}{#1}}
\newcommand{\settocsubsubsectionindent}[1]{\setlength{\@tocsubsubsectionindent}{#1}}

\renewcommand{\l@section}{\section@tocline{1}{\@tocsectionvskip}{\@tocsectionindent}{}{\@tocsectionformat}}%
\renewcommand{\l@subsection}{\subsection@tocline{1}{\@tocsubsectionvskip}{\@tocsubsectionindent}{}{\@tocsubsectionformat}}%
\renewcommand{\l@subsubsection}{\subsubsection@tocline{1}{\@tocsubsubsectionvskip}{\@tocsubsubsectionindent}{}{\@tocsubsubsectionformat}}%
\newcommand{\@tocsectionformat}{}
\newcommand{\@tocsubsectionformat}{}
\newcommand{\@tocsubsubsectionformat}{}
\expandafter\def\csname toc@1format\endcsname{\@tocsectionformat}
\expandafter\def\csname toc@2format\endcsname{\@tocsubsectionformat}
\expandafter\def\csname toc@3format\endcsname{\@tocsubsubsectionformat}
\newcommand{\settocsectionformat}[1]{\renewcommand{\@tocsectionformat}{#1}}
\newcommand{\settocsubsectionformat}[1]{\renewcommand{\@tocsubsectionformat}{#1}}
\newcommand{\settocsubsubsectionformat}[1]{\renewcommand{\@tocsubsubsectionformat}{#1}}
\newlength{\@tocsectionvskip}
\newcommand{\settocsectionvskip}[1]{\setlength{\@tocsectionvskip}{#1}}
\newlength{\@tocsubsectionvskip}
\newcommand{\settocsubsectionvskip}[1]{\setlength{\@tocsubsectionvskip}{#1}}
\newlength{\@tocsubsubsectionvskip}
\newcommand{\settocsubsubsectionvskip}[1]{\setlength{\@tocsubsubsectionvskip}{#1}}

\patchcmd{\tocsection}{\indentlabel}{\makebox[\@tocsectionnumwidth][l]}{}{}
\patchcmd{\tocsubsection}{\indentlabel}{\makebox[\@tocsubsectionnumwidth][l]}{}{}
\patchcmd{\tocsubsubsection}{\indentlabel}{\makebox[\@tocsubsubsectionnumwidth][l]}{}{}

\newcommand{\@sectypepnumformat}{}
\renewcommand{\contentsline}[1]{%
  \expandafter\let\expandafter\@sectypepnumformat\csname @toc#1pnumformat\endcsname%
  \csname l@#1\endcsname}
\newcommand{\@tocsectionpnumformat}{}
\newcommand{\@tocsubsectionpnumformat}{}
\newcommand{\@tocsubsubsectionpnumformat}{}
\newcommand{\setsectionpnumformat}[1]{\renewcommand{\@tocsectionpnumformat}{#1}}
\newcommand{\setsubsectionpnumformat}[1]{\renewcommand{\@tocsubsectionpnumformat}{#1}}
\newcommand{\setsubsubsectionpnumformat}[1]{\renewcommand{\@tocsubsubsectionpnumformat}{#1}}
\renewcommand{\@tocpagenum}[1]{%
  \hfill {\mdseries\@sectypepnumformat #1}}

\let\oldappendix\appendix
\renewcommand{\appendix}{%
  \leavevmode\oldappendix%
  \addtocontents{toc}{%
    \protect\settowidth{\protect\@tocsectionnumwidth}{\protect\@tocsectionformat\sectionname\space}%
    \protect\addtolength{\protect\@tocsectionnumwidth}{2em}}%
}
\makeatother



\makeatletter
\settocsectionnumwidth{2em}
\settocsubsectionnumwidth{2.5em}
\settocsubsubsectionnumwidth{3em}
\settocsectionindent{1pc}%
\settocsubsectionindent{\dimexpr\@tocsectionindent+\@tocsectionnumwidth}%
\settocsubsubsectionindent{\dimexpr\@tocsubsectionindent+\@tocsubsectionnumwidth}%
\makeatother

\settocsectionvskip{10pt}
\settocsubsectionvskip{0pt}
\settocsubsubsectionvskip{0pt}



\settocsectionformat{\bfseries}
\settocsubsectionformat{\mdseries}
\settocsubsubsectionformat{\mdseries}
\setsectionpnumformat{\bfseries}
\setsubsectionpnumformat{\mdseries}
\setsubsubsectionpnumformat{\mdseries}


\let\oldtableofcontents\tableofcontents
\renewcommand{\tableofcontents}{%
  \vspace*{-\linespacing}
  \oldtableofcontents}

\setcounter{tocdepth}{3}

\usepackage{times}
\usepackage{mathrsfs} 
\usepackage{latexsym,amsopn,amssymb,amsmath}
\usepackage{amsthm}
\usepackage{amsfonts,amsbsy,amscd,stmaryrd}
\usepackage{paralist}
\usepackage{enumitem} 
%
\usepackage{bm}
\usepackage{mathrsfs} 
\usepackage[pagebackref,hypertexnames=true]{hyperref}
\hypersetup{colorlinks=true,linkcolor=RoyalBlue,%
urlcolor=RoyalBlue,citecolor=RoyalBlue}
\usepackage[usenames,dvipsnames,svgnames,table]{xcolor}

\newcommand{\C}{\mathbb{C}}

\newcommand{\N}{\mathbb{N}}

\newcommand{\R}{\mathbb{R}}

\newcommand{\Z}{\mathbb{Z}}
\newcommand{\mbs}{\mathbb{S}}
\newcommand{\ca}{\mathcal{A}}

\newcommand{\ce}{\mathcal{E}}
\newcommand{\cf}{\mathcal{F}}
\newcommand{\cg}{\mathcal{G}}
\newcommand{\ch}{\mathcal{H}}

\newcommand{\cl}{\mathcal{L}}
\newcommand{\cm}{\mathcal{M}}
\newcommand{\cn}{\mathcal{N}}

\newcommand{\cR}{\mathcal{R}}
\newcommand{\cs}{\mathcal{S}}

\newcommand{\cx}{\mathcal{X}}
\def\rond{\mathscr}
\newcommand{\ra}{\rond{A}}
\newcommand{\rb}{\rond{B}}
\newcommand{\rc}{\rond{C}}
\newcommand{\rd}{\rond{D}}
\newcommand{\re}{\rond{E}}

\newcommand{\rk}{\rond{K}}

\newcommand{\rr}{\rond{R}}

\newcommand{\rx}{\rond{X}}
\newcommand{\op}{\mathrm{op}}
\def\diam{\mathrm{diam}}
\def\dist{\mathrm{dist}}

\def\rmb{\mathrm{b}}
\def\rmc{\mathrm{c}}

\def\d{\mathrm{d}}

\def\e{\mathrm{e}}

\def\rmi{\mathrm{i}}

\def\rmu{\mathrm{u}}

\def\veps{\varepsilon}
\def\vphi{\varphi}
\def\vkappa{\varkappa}

\def\braket#1#2{\langle{#1}|{#2}\rangle}

\def\rarrow{\rightarrow}

\def\what{\widehat}
\def\what#1{\widehat{ #1\,}}

\DeclareMathOperator*{\slim}{s-lim}
\DeclareMathOperator*{\wlim}{w-lim}
\DeclareMathOperator*{\ulim}{u-lim}
\def\nin{\notin}
\def\supp{\mbox{\rm supp\! }}

\def\loc{\mathrm{loc}}

\def\spec{\mathrm{Sp}}
\def\spe{\mathrm{Sp_{ess}}}

\def\nin{\notin}

\def\ccup{\textstyle{\bigcup}}

\newcommand{\oX}{\overline{X}}

\newcommand{\oXY}{\overline{X/Y}}
\newcommand{\oXZ}{\overline{X/Z}}

\def\ol#1{\overline{#1}}

%
%
%
%
%
%
%

%
%
\usepackage{perpage}
\MakePerPage{footnote}

%

\newtheorem{theorem}{Theorem}[section]
\newtheorem{lemma}[theorem]{Lemma}
\newtheorem{proposition}[theorem]{Proposition}
\newtheorem{corollary}[theorem]{Corollary}

\newtheorem{remark}[theorem]{Remark}

\newtheorem{example}[theorem]{Example}


\vspace*{40mm}

\title[Essential Spectrum]
{On the essential spectrum \\
of elliptic differential operators  
\\[2mm]
{\tiny J\MakeLowercase{anuary} 3, 2018} }

\begin{document}

\author[V. Georgescu]{Vladimir Georgescu}
     \address{V. Georgescu, D\'epartement de Math\'ematiques,
       Universit\'e de Cergy-Pontoise, 95000 Cergy-Pontoise, France}
     \email{vladimir.georgescu@math.cnrs.fr}

\thanks{{\em AMS Subject classification (2010):} 35P05 (Primary) 
      34L05, 47A10, 47L65, 58J50, 81Q10 (Secondary).}
\thanks{{\em Key words and phrases:} Spectral analysis, essential
  spectrum, Fredholm, $C^*$-algebra, crossed product. }

\begin{abstract} 
  Let $\ca$ be a $C^*$-algebra of bounded uniformly continuous
  functions on a finite dimensional real vector space $X$ such that
  $\ca$ is stable under translations and contains the continuous
  functions that have a limit at infinity. Denote $\ca^\dagger$ the
  boundary of $X$ in the character space of $\ca$. Then to each
  operator $A$ in the crossed product $\ca\rtimes X$ one may naturally
  associate a family of bounded operators $A_\vkappa$ on $L^2(X)$
  indexed by the characters $\vkappa\in\ca^\dagger$. We show that the
  essential spectrum of $A$ is the union of the spectra of the
  operators $A_\vkappa$.  The applications cover very general classes
  of singular elliptic operators.
\end{abstract}

\maketitle 

\newpage

\tableofcontents

\newpage

\section{Introduction}\label{s:intro}

\subsection{Elliptic algebra}\label{ss:EllAlg}

Let $X=\R^d$ and $L^2=L^2(X)$.  We denote $\rb=\rb(X)$ the algebra of
bounded operators on $L^2$ and $\rk=\rk(X)$ that of compact operators.
Let $\rb_\loc$ be the space $\rb$ equipped with the \emph{local norm
  topology} defined by the family of seminorms
$\|A\|_\theta=\|A\theta(q)\|$ where $\theta\in C_0(X)$ (continuous
functions which tend to zero at infinity) and $\theta(q)$ means
multiplication by $\theta$.  Clearly this topology is metrizable and
finer than the strong operator topology.  If $\theta(x)>0\ \forall x$
then $\|\cdot\|_\theta$ is a norm on $\rb$ which on bounded subsets
defines the local norm topology.  If $(A_s)_s$ is a sequence which
converges in $\rb_\loc$ to $A$, we say that the sequence is
\emph{locally norm convergent} and write $\ulim_s A_s=A$.

If $a\in X$ then $\e^{\rmi aq}$ and $\e^{\rmi ap}$ are the unitary
operators on $L$ that act as follows:
\begin{equation}\label{eq:qp}
(\e^{\rmi aq}u)(x)=\e^{\rmi ax}u(x) 
\quad\text{and}\quad
(\e^{\rmi ap}u)(x)=u(x+a).
\end{equation}
We also use an alternative notation for the translation by $a\in X$ of
a function, namely $\tau_a(\varphi)(x)=\varphi(a+x)$, and extend it to
operators: $\tau_a(A)\doteq\e^{\rmi ap}A\e^{-\rmi ap}$ for $A\in\rb$.
The \emph{elliptic algebra of $X$} (the name will be justified page
\pageref{p:caell}) is defined by
\begin{equation}\label{eq:ellalg}
\re=\{A\in\rb \mid \lim_{a\to0}\|(\e^{\rmi ap}-1)A^{(*)}\|=0,\
\lim_{a\to0}\|\e^{-\rmi aq}A\e^{\rmi aq}-A\|=0 \} .
\end{equation}
The notation $A^{(*)}$ means that the relation must hold for both $A$
and $A^*$. Clearly $\re$ is a $C^*$-algebra. $\re_\loc$ is the set
$\re$ equipped with the local norm topology inherited from $\rb_\loc$.

Our main result requires more formalism but we can state right now the
simplest particular case, which does not require any $C^*$-algebra
background.  We denote $X^\dagger$ the set of all ultrafilters finer
than the Fr\'echet filter on $X$.  We denote $\spec(A)$ the spectrum
and $\spe(A)$ the essential spectrum of an operator $A$.

\begin{theorem}\label{th:rb}
  If $A\in\re$ then the limit
  $\ulim_{x\to\vkappa}\tau_x(A)\doteq A_\vkappa$ exists
  $\forall\vkappa\in X^\dagger$ and
\begin{equation}\label{eq:rb}
\spe (A)= \ccup_{\vkappa\in X^\dagger} \spec(A_\vkappa).
\end{equation} 
\end{theorem}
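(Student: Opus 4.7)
The plan is three-step: (i) establish existence of $A_\vkappa$ as an $\rb_\loc$-limit; (ii) prove $\spec(A_\vkappa)\subseteq\spe(A)$ for each $\vkappa$ via a singular Weyl sequence; (iii) prove the reverse inclusion by identifying $\rk$ with the kernel of the $*$-morphism $A\mapsto(A_\vkappa)_\vkappa$.

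For step (i), the two defining conditions of $\re$ make $x\mapsto\tau_x(A)$ norm-continuous from $X$ to $\rb$ (combine condition~1 with its $A^*$ version). I would then establish a compactness lemma: $A\theta(q)$ is compact for every $A\in\re$ and $\theta\in C_0(X)$, via the Fr\'echet--Kolmogorov criterion, with condition~1 giving translation-equicontinuity of the image of the $L^2$-unit ball and $\theta$ providing decay. Existence of $\ulim_{x\to\vkappa}\tau_x(A)$ then reduces, via the seminorms defining $\rb_\loc$, to norm-precompactness of $\{\tau_x(A)\theta(q)\}_{x\in X}$ in $\rk$ for each $\theta\in C_\rmc(X)$. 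On generators of the form $A=\phi(p)\psi(q)$ with $\phi\in C_0$ and $\psi\in\cbu$, this precompactness is Arzel\`a--Ascoli applied to the equicontinuous translates $\{\psi(\cdot+x)|_{\supp\theta}\}$, and the general case follows by a density argument within $\re$.

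For step (ii), given $\lambda\in\spec(A_\vkappa)$, either $\lambda$ lies in the approximate point spectrum of $A_\vkappa$ or $\bar\lambda$ in that of $A_\vkappa^*$; without loss of generality I pick normalized compactly-supported $u_n$ with $\|(A_\vkappa-\lambda)u_n\|\to 0$. Using $\tau_x(A)\to A_\vkappa$ in $\rb_\loc$ along $\vkappa$, I select $x_n$ along $\vkappa$ with $|x_n|\to\infty$ and $\|(\tau_{x_n}(A)-\lambda)u_n\|<1/n$. Then $v_n:=\e^{-\rmi x_n p}u_n$ satisfies $\|(A-\lambda)v_n\|=\|(\tau_{x_n}(A)-\lambda)u_n\|\to 0$ by unitary equivalence, while $v_n\rightharpoonup 0$ since $\supp v_n$ escapes to infinity, producing a singular Weyl sequence for $A$ at $\lambda$ and hence $\lambda\in\spe(A)$.

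For step (iii), I verify that $\pi:A\mapsto(A_\vkappa)_\vkappa$ is a $*$-homomorphism $\re\to\prod_\vkappa\rb$, using that $\rb_\loc$-convergence implies strong convergence and that strong convergence composed with a compact operator yields norm convergence (applied via the compactness of $B_\vkappa\theta(q)$). The inclusion $\rk\subseteq\ker\pi$ is immediate: for $K$ compact, $\|\tau_x(K)\theta(q)\|=\|K\theta(q-x)\|\to 0$ as $|x|\to\infty$, since $\theta(q-x)\rightharpoonup 0$ weakly and $K$ is compact. For $\ker\pi\subseteq\rk$, the assumption $A_\vkappa=0$ for all $\vkappa$ gives $\|A\theta(q-x)\|\to 0$ as $|x|\to\infty$ for every $\theta\in C_0$. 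I would bootstrap this pointwise decay to the uniform decay $\|A(1-\chi_R(q))\|\to 0$ as $R\to\infty$ (with $\chi_R\in C_\rmc$ a cutoff to $\{|q|\leq R\}$) via a Cotlar--Stein-type almost-orthogonality estimate on a lattice partition of unity $\sum_k\varphi(q-y_k)^2=1$, exploiting that $A^*A\in\re$ also satisfies $(A^*A)_\vkappa=0$. Since $A\chi_R(q)$ is compact by the compactness lemma, $A$ is then a norm limit of compacts, so $A\in\rk$. The induced injection $\re/\rk\hookrightarrow\prod_\vkappa\rb$ preserves spectrum, yielding $\spe(A)=\bigcup_\vkappa\spec(A_\vkappa)$. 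The main obstacle is this Cotlar--Stein bootstrap: controlling the cross-terms $\varphi(q-y_k)A^*A\varphi(q-y_l)$ uniformly and summably requires essential use of both defining conditions of $\re$ together with the compactness lemma.
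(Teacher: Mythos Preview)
Your step~(iii) contains a genuine gap that misses the main difficulty of the theorem. You correctly argue that $\ker\pi=\rk$, so that $\pi$ induces an injective $*$-morphism $\re_1/\rk\hookrightarrow\prod_\vkappa\rb$ (after unitizing), and you correctly invoke the fact that injective $C^*$-morphisms preserve spectra. But the spectrum of $(A_\vkappa-\lambda)_\vkappa$ in the product algebra $\prod_\vkappa\rb$ is \emph{not} $\bigcup_\vkappa\spec(A_\vkappa)$: an element $(B_\vkappa)_\vkappa$ is invertible in $\prod_\vkappa\rb$ if and only if every $B_\vkappa$ is invertible \emph{and} $\sup_\vkappa\|B_\vkappa^{-1}\|<\infty$. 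Thus spectrum-preservation gives only
\[
\spe(A)=\{\lambda:\text{some }A_\vkappa-\lambda\text{ is not invertible, or }\sup_\vkappa\|(A_\vkappa-\lambda)^{-1}\|=\infty\},
\]
which, combined with your step~(ii), yields $\spe(A)=\overline{\bigcup_\vkappa\spec(A_\vkappa)}$. This is precisely the weaker relation~\eqref{eq:Phiii} of the paper (valid a~priori only for normal $A$), not the theorem as stated.

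The whole point of the theorem, and the content of Section~\ref{s:proof} of the paper, is to remove the closure: one must show that if every $A_\vkappa-\lambda$ is invertible then the inverses are automatically uniformly bounded, equivalently that $\inf_{\vkappa}\nu(S_\vkappa)$ is \emph{attained} at some $\omega\in X^\dagger$, where $S=A-\lambda$ and $\nu(T)=\inf_{\|u\|=1}\|Tu\|$. The paper accomplishes this via the Lindner--Seidel technique: the metric sparsification property of $\R^d$ (Lemma~\ref{lm:nuc}) lets one pass from $\nu(S_\vkappa)$ to a localized version $\nu_\theta(S_\vkappa|\Omega)$ uniformly in $\vkappa$ (Lemma~\ref{lm:nuca}), and an iterated translation argument (Lemma~\ref{lm:imppp}) together with compactness of $\{S_\vkappa\}$ in $\rb_\loc$ then produces the minimizing $\omega$ (Lemma~\ref{lm:enfin}). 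Nothing in your outline addresses this uniform-boundedness issue; your steps~(i)--(iii), even if carried out completely, reprove only Theorem~\ref{th:GIold} and relation~\eqref{eq:Phiii}.
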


\textbf{Remark.} That the convergence holds locally in norm is
important for the proof of the theorem. This type of convergence has
been used in \cite{LaS}, see for example (4.24) there.

For the convenience of the reader, we recall in an appendix \S\ref{s:app}
some facts concerning filters and ultrafilters and also reformulate
Theorem \ref{th:rb} in terms of sequences, as in \cite{LS,RRS0,RRS}
for example. We mention that ultrafilters have been first used in this
context in \cite[Th.\ 4.1]{GI1} and then in \cite{R1}. In
\S\ref{s:app} we will see that ultrafilters play a quite
natural r\^ole in the theory.

In \S\ref{ss:unbounded} we will extend this result to the unbounded
operators whose resolvent belongs to $\re$. We mention a simpler
result in the self-adjoint case. Note that non-densely defined
self-adjoint operators could appear as limits. For example, quite
often $H_\vkappa=\infty$ where $\infty$ is the operator with $\{0\}$
as domain and $0$ as resolvent. Clearly, $H$ has purely discrete
spectrum, i.e.\ $\spe(H)=\emptyset$, if and only if $H_\vkappa=\infty$
for all $\vkappa$. See also \S\ref{sss:unbounded}.

\begin{corollary}\label{co:classic}
  Let $H$ be a self-adjoint operator on $L^2$ such that
  $R(z)=(H-z)^{-1}$ satisfies for some, hence for all, $z$ in the
  resolvent set of $H$:
  \begin{equation}\label{eq:sadj}
    \lim_{a\to0}\|(\e^{\rmi ap}-1) R(z)\|=0 \quad\text{and}\quad 
    \lim_{a\to0}\|[\e^{\rmi aq}, R(z)]\|=0 .
  \end{equation}
  If $\vkappa\in X^\dagger$ then
  $\ulim_{x\to\vkappa}\tau_x(R(z))\doteq R_\vkappa(z)$ exists for all
  $z$ in the resolvent set of $H$ and $R_\vkappa(z)$ is a self-adjoint
  pseudo-resolvent, hence is the resolvent of a self-adjoint operator
  $H_\vkappa$ acting in a closed subspace $L^2_\vkappa$ (which could
  be reduced to $0$) of $L^2$. We have
  \begin{equation}\label{eq:rb-aa}
\spe (H)= \ccup_{\vkappa\in X^\dagger} \spec(H_\vkappa)
\end{equation}
where the spectrum of $H_\vkappa$ is computed in the subspace
$L^2_\vkappa$. 
\end{corollary}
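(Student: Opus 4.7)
The plan is to reduce Corollary \ref{co:classic} to Theorem \ref{th:rb} applied to the resolvent $R(z)$. First I would verify that $R(z)\in\re$ for every $z$ in the resolvent set of $H$. Because $R(z)^*=R(\bar z)$, the two conditions in \eqref{eq:sadj} (once one notes that $\|[\rme^{\rmi aq},R(z)]\| = \|\rme^{-\rmi aq}R(z)\rme^{\rmi aq}-R(z)\|$) are exactly the two defining conditions of $\re$ in \eqref{eq:ellalg} applied to $R(z)$ and $R(z)^*=R(\bar z)$. The ``hence for all $z$'' claim follows from the Neumann expansion of the first resolvent identity
\begin{equation*}
R(z)=\sum_{n\geq 0}(z-z_0)^n R(z_0)^{n+1},
\end{equation*}
valid in norm for $|z-z_0|$ small, together with the fact that $\re$ is a norm-closed $*$-subalgebra of $\rb$. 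Since $H$ is self-adjoint, $\spec(H)\subset\R$ so the resolvent set is connected and analytic continuation propagates $R(z)\in\re$ to the whole resolvent set.

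Next, Theorem \ref{th:rb} applied to $A=R(z)$ already gives the existence of the local-norm limit $R_\vkappa(z)\doteq\ulim_{x\to\vkappa}\tau_x(R(z))$ for every $\vkappa\in X^\dagger$ together with
\begin{equation}\label{eq:sp-R}
\spe(R(z))=\ccup_{\vkappa\in X^\dagger}\spec(R_\vkappa(z)).
\end{equation}
To identify $R_\vkappa(z)$ as a pseudo-resolvent I pass to the limit in the resolvent identity. Applying the $*$-automorphism $\tau_x$ to $R(z)-R(w)=(z-w)R(z)R(w)$ and using that local norm convergence is stable under products of uniformly norm-bounded families (because $\|A_sB_s-AB\|_\theta\leq\|A_s\|\,\|(B_s-B)\theta(q)\|+\|(A_s-A)B\theta(q)\|$ and $B\theta(q)$ is again of the form $B'\theta'(q)$ modulo multiplying by a suitable cutoff), one obtains
\begin{equation*}
R_\vkappa(z)-R_\vkappa(w)=(z-w)R_\vkappa(z)R_\vkappa(w).
\end{equation*}
The symmetry $R_\vkappa(z)^*=R_\vkappa(\bar z)$ follows because local norm convergence implies strong convergence, $\tau_x(R(z))^*=\tau_x(R(\bar z))$, and the ultralimit of the right side exists in local norm by the same Theorem \ref{th:rb} applied to $R(\bar z)\in\re$.

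Having a self-adjoint pseudo-resolvent, I invoke the standard pseudo-resolvent structure theorem (Kato): the kernel $N$ of $R_\vkappa(z)$ is independent of $z$, the range of $R_\vkappa(z)$ is dense in $L^2_\vkappa\doteq N^\perp$, and $R_\vkappa(z)$ is the resolvent of a unique self-adjoint operator $H_\vkappa$ on $L^2_\vkappa$ (possibly $\{0\}$). It remains to translate \eqref{eq:sp-R} into \eqref{eq:rb-aa}. Spectral mapping for the Cayley-type transform $\lambda\mapsto(\lambda-z)^{-1}$ gives
\begin{equation*}
\spe(R(z))\setminus\{0\}=\{(\lambda-z)^{-1}:\lambda\in\spe(H)\},
\end{equation*}
and likewise $\spec(R_\vkappa(z))\setminus\{0\}=\{(\lambda-z)^{-1}:\lambda\in\spec(H_\vkappa)\}$ when the latter spectrum is understood in $L^2_\vkappa$ (if $L^2_\vkappa=\{0\}$ then $\spec(H_\vkappa)=\emptyset$ and $\spec(R_\vkappa(z))\subseteq\{0\}$, contributing nothing). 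Removing $0$ from both sides of \eqref{eq:sp-R} and inverting the bijection $\lambda\mapsto(\lambda-z)^{-1}$ yields \eqref{eq:rb-aa}.

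The main obstacle is the pseudo-resolvent step: one must check both that the limit $R_\vkappa(z)$ survives the algebraic manipulation (passage to limits in products, and in adjoints via the strong topology) and that the resulting object is a bona fide self-adjoint pseudo-resolvent so that Kato's theorem produces the operator $H_\vkappa$. The subsequent spectral-mapping argument is routine, but attention to the exceptional point $0$ is needed because possibly unbounded limiting operators $H_\vkappa$ — including the trivial case $H_\vkappa=\infty$ — are allowed.
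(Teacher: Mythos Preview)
Your argument is correct and follows the same route as the paper: the paper derives Corollary~\ref{co:classic} from the general framework of \S\ref{ss:unbounded} (Theorem~\ref{th:GIU}, Lemma~\ref{lm:assoc}, and the self-adjoint discussion in \S4.1.4), which amounts to exactly the steps you outline---affiliation of $R(z)$ to $\re$, existence of the limits, pseudo-resolvent identity for $R_\vkappa$, Kato's structure theorem, and spectral mapping via \eqref{eq:spmtr}--\eqref{eq:smtr}.

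The one place where the paper is slightly cleaner is the pseudo-resolvent step: rather than passing to limits in products by hand, the paper observes (Theorem~\ref{th:GIold}) that $A_\vkappa=\tau_\vkappa(A)$ where $\tau_\vkappa$ is a \emph{morphism}, so the resolvent identity and the relation $R(z)^*=R(\bar z)$ are preserved automatically. Your direct argument works too, but the justification ``$B\theta(q)$ is again of the form $B'\theta'(q)$'' is not quite right as stated: it holds only after approximating $B\in\re$ by a controlled operator. A cleaner way to close the product step is to note that $B\theta(q)$ is compact for any $B\in\re$ and $\theta\in C_0(X)$, that local-norm convergence of a bounded net implies strong convergence, and that strong convergence composed with a compact operator gives norm convergence; this handles $\|(A_s-A)B\theta(q)\|\to0$ without further approximation.
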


This gives a complete proof of Corollary 4.2 in \cite{GI1}: the proof
sketched on page 31 there gives \eqref{eq:rb-aa} but with union
replaced by the closure of the union \cite[Th.\ 1.2]{GI4}.

\subsection{General algebras.}\label{ss:alg}

This subsection contains some preliminary notations and material
required for the presentation of our main result Theorem \ref{th:GIA}.

$C_\rmb^\rmu(X)$ is the algebra of bounded uniformly continuous
functions on $X$ and the subalgebras consisting of functions which
have compact support, or tend to zero at infinity, or have a limit at
infinity are denoted $C_\rmc(X)$, $C_0(X)$, and
$C_\infty(X)=C_0(X)+\C$ respectively.

A complex measurable function $\varphi$ on $X$ is usually identified
with the operator of multiplication by $\varphi$ on $L^2$, but for
clarity it is sometimes convenient to denote $\varphi(q)$ this
operator. Then we define $\vphi(p)=\cf^{-1}\varphi(q)\cf$, where $\cf$
is the Fourier transform (formally $p=-\rmi\nabla$).  With the
notation $\tau_a$ introduced before:
$\tau_a(\varphi(q))=(\tau_a(\varphi))(q)$ and
$\tau_a(\varphi(p))=\varphi(p)$.

Let $\ca$ be a $C^*$-algebra of bounded uniformly continuous functions
on $X$ such that $\ca$ is \emph{stable under translations} and
contains the set of continuous functions that have a limit at
infinity: so $C_\infty(X)\subset\ca\subset C_\rmb^\rmu(X)$.  The
crossed product $\ra=\ca\rtimes X$ of the $C^*$-algebra $\ca$ by the
action of $X$ \cite{W} is canonically isomorphic with the
$C^*$-algebra of operators on $L^2$ defined as the norm closed linear
subspace of $\rb$ generated by the operators $\varphi(q)\psi(p)$ with
$\varphi\in\ca$ and $\psi\in C_0(X)$.  Obviously $C_0(X)\rtimes X=\rk$
and $\rk\subset\ra$ for any $\ca$.

One may describe $\ra$ as a \emph{$C^*$-algebra of elliptic
  operators}: if $h$ is a real elliptic polynomial of order $m$ on
$X$, then $\ra$ is the $C^*$-algebra generated%
\footnote{ In the sense that it is the smallest $C^*$-algebra which
  contains the resolvents of these operators.}
by the self-adjoint operators of the form $h(p) + S$, where $S$ runs
over the set of symmetric differential operators of order $< m$ with
coefficients in
$\ca^\infty=\{\varphi\in C^\infty(X)\mid \varphi^{(\alpha)}\in\ca\
\forall\alpha\}$ \cite[Cor.\ 2.4]{DG0}. \label{p:caell}

The largest algebra $\ca$ allowed by our conditions is
$C_\rmb^\rmu(X)$ and \emph{the corresponding crossed product coincides
  with the elliptic algebra}: $C_\rmb^\rmu(X)\rtimes X=\re$
\cite{G,GI3} (then $\ca^\infty=C^\infty_\rmb(X)$). $\re$ has another
interesting description which makes the connection with the
band-dominated operators \cite{RRS}. Say that $k:X^2\to\C$ is a
\emph{controlled kernel} if there is $r>0$ real such that $k(x,y)=0$
if $|x-y|>r$ \cite{R}. If $k$ is a bounded uniformly continuous
controlled kernel then it defines a bounded integral operator and
$\re$ is the norm closure of the set of such operators
\cite[Prop. 6.5]{G}.  This characterization is independent of the
group structure of $X$.

The \emph{character space}, or \emph{spectrum}, of $\ca$ is the
compact space $\sigma(\ca)$ consisting of nonzero morphisms $\ca\to\C$
equipped with the weak$^*$ topology inherited from the embedding
$\sigma(\ca)\subset\ca'$ (dual of $\ca$). Each $x\in X$ defines a
character $\chi_x:\varphi\mapsto\varphi(x)$ and the map
$x\mapsto\chi_x$ is a homeomorphism of $X$ onto an open dense subset
of $\sigma(\ca)$ that we identify with $X$. \emph{The boundary of $X$
  in $\sigma(\ca)$} is the compact set
\begin{equation}\label{eq:cadag}
  \ca^\dagger=\sigma(\ca)\setminus X
  =\{\vkappa\in\sigma(\ca) \mid \vkappa(\varphi)=0
  \ \forall \varphi\in C_0(X) \}.  
\end{equation}
$\ca$ is canonically isomorphic with the $C^*$-algebra
$C(\sigma(\ca))$: each $\varphi\in\ca$ extends to a continuous
function on $\ca$ for which we keep the notation $\varphi$. We just
set $\varphi(\chi)=\chi(\varphi)$.

\subsection{Translations by characters}\label{ss:tchar}

The additive group $X$ naturally acts on $\sigma(\ca)$. Indeed,
$a\mapsto\tau_a$ is a strongly continuous homomorphism from $X$ into
the set of automorphisms of $\ca$ so if $\tau_a':\ca'\to\ca'$ is the
dual map then we get an action $a\mapsto\tau_a'$ of $X$ on the dual
space $\ca'$ of $\ca$.  If $\chi$ is a linear form on $\ca$ then
$\tau_a'(\chi)$ is the linear form $\tau_a'(\chi)=\chi\circ\tau_a$ so
if $\chi$ is a character then $\tau_a'(\chi)$ is a character. From
\eqref{eq:cadag} we see that this action of $X$ leaves invariant $X$
and $\ca^\dagger$ and $\tau_a'(\chi_b)=\chi_{a+b}$ for $a,b\in
X$. When there is no ambiguity we simplify the notation and set
$\tau_a'(\chi)=a+\chi$ for $a\in X$ and
$\chi\in\sigma(\ca)$.  \label{p:trans}

To each $\chi\in\sigma(\ca)$ we associate a morphism
$\tau_\chi:\ca\to C_\rmb^\rmu(X)$ uniquely determined by the condition
$\chi_x\circ\tau_\chi=\chi\circ\tau_x\forall x\in X$ and we say that
$\tau_\chi$ is the \emph{translation morphism associated to the
  character $\chi$}.  More explicitly, if $\varphi\in\ca$ then
$\tau_\chi(\varphi)$ is the function
\begin{equation}\label{eq:mtrans}
\tau_\chi(\varphi)(x)=\chi(\tau_x(\varphi)) \quad\forall x\in X.
\end{equation}
If $\chi=\chi_a$ is the evaluation at $a\in X$ then this is the usual
translation $\tau_a(\varphi)$. If $\chi=\vkappa\in\ca^\dagger$ then
think of $\tau_\vkappa(\varphi)$ as the translation of $\varphi\in\ca$
by the point at infinity $\vkappa$. 
Note that 
\[
\tau_\chi(\varphi)(x)=\chi(\tau_x(\varphi))
=(\tau_x'(\chi))(\varphi)=(x+\chi)(\varphi)
=\varphi(x+\chi).
\]
So $\tau_\chi(\varphi)$ is the function $x\mapsto \varphi(x+\chi)$
which we sometimes call \emph{localization of $\vphi$ at
  $\chi$}. \label{p:localiz}

The translations by characters $\chi\in\sigma(\ca)$ of functions
$\varphi\in\ca$ extend to translations of operators $A\in\ra$ as
follows: there is a unique continuous map $\tau_\chi:\ra\to\re$ such
that
\begin{equation}\label{eq:transm}
\tau_\chi(\varphi(q)\psi(p))=(\tau_\chi(\varphi))(q)\psi(p)
\quad \forall\varphi\in\ca\ \forall\psi\in C_0(X). 
\end{equation}
If $\chi=x\in X$ then $\tau_x(A)=\e^{\rmi xp}A\e^{-\rmi xp}$ and if
$\chi=\vkappa\in\ca^\dagger$ then
\begin{equation}\label{eq:tchar}
\tau_\vkappa(A)=\lim_{x\to\vkappa}\e^{\rmi xp}A\e^{-\rmi xp}
\end{equation}
where $x\in X$ tends to $\vkappa$ in $\sigma(\ca)$ and the limit holds
in the local norm topology on $\re$ (Theorem \ref{th:GIold}). We keep
the name \emph{translation morphism} for the map $\tau_\chi$ extended
to the operator level.

\subsection{Fredholm operators.}\label{ss:fred}

We recall some facts here concerning the essential spectrum of bounded
operators \cite[\S 4.3]{D0}.  Let $\ch$ be a Hilbert space.
$F\in B(\ch)$ is a \emph{Fredholm operator} if its kernel has finite
dimension and its range finite codimension (then the range is
closed). Equivalently, this means that there is an operator
$G\in B(\ch)$ such that $1-FG$ and $1-GF$ are compact. Clearly, this
can be rephrased as follows: \emph{the image of $F$ in the quotient
  $C^*$-algebra of $B(\ch)$ with respect to the ideal of compact
  operators $K(\ch)$ is an invertible operator} (Atkinson theorem
\cite[Th.\ 4.3.7]{D0}). It is this last version that we will use.

The spectrum of $A\in B(\ch)$ is denoted $\spec(A)$. Then the
\emph{essential spectrum} of $A$ is the set $\spe(A)$ of complex
numbers $\lambda$ such that $A-\lambda$ is not a Fredholm operator.

If $F\in\re$ is Fredholm then there is $G\in\rb$ such that
$1=FG+K$ with $K$ compact. Since $\|(\e^{\rmi xp}-1)F\|\to0$ and
$\|(\e^{\rmi xp}-1)K\|\to0$ as $x\to0$ we get
$\|(\e^{\rmi xp}-1)\|\to0$, which is impossible. Thus:

\begin{remark}\label{re:nfred} There are no Fredholm operators in
  $\re$. Thus $0\in\spe(A)$ if $A\in\re$.
\end{remark}

\subsection{Main results}\label{ss:mresults}

We recall a fact proved in \cite[Th.\ 5.16]{GI4} for $X$ an arbitrary
locally compact abelian groups.

\begin{theorem}\label{th:GIold}
  For any $A\in\ra$ the map
  $x\mapsto A_x\doteq\tau_x(A)= \e^{\rmi xp}A\e^{-\rmi xp}$ is norm
  continuous and extends to a continuous map
  $\sigma(\ca)\ni\chi\mapsto A_\chi\in\re_\loc$.  We have
  $A_\chi=\tau_\chi(A)$ where $\tau_\chi$ is the translation morphism
  associated to $\chi$. And
\begin{equation}\label{eq:kompact}
 \tau_\vkappa(A)=0 \ \forall\vkappa\in \ca^\dagger 
\Longleftrightarrow A\in\rk .
\end{equation}
\end{theorem}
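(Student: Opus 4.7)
My plan is to reduce all three claims to the canonical generators $B=\varphi(q)\psi(p)$ of $\ra=\ca\rtimes X$ (with $\varphi\in\ca$, $\psi\in C_0(X)$) and then extend by a density plus uniform-bound argument. The key algebraic identity is that $\e^{\rmi xp}$ commutes with $\psi(p)$, so
\[
\tau_x(B)=\e^{\rmi xp}\varphi(q)\psi(p)\e^{-\rmi xp}=(\tau_x\varphi)(q)\psi(p).
\]
Since $\varphi\in\ca\subset C_\rmb^\rmu(X)$ is uniformly continuous, $\|\tau_x\varphi-\varphi\|_\infty\to 0$ as $x\to 0$, hence $\|\tau_x(B)-B\|\to 0$. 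Because $\tau_x$ is an isometric $*$-automorphism of $\rb$, the density of the span of such $B$ in $\ra$ then yields norm continuity of $x\mapsto\tau_x(A)$ on all of $\ra$.

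For the extension to $\sigma(\ca)$, I set $\tau_\chi(B)=(\tau_\chi\varphi)(q)\psi(p)$ using the translation morphism $\tau_\chi\colon\ca\to C_\rmb^\rmu(X)$ from \eqref{eq:mtrans}; the estimate $|\tau_\chi\varphi(y)-\tau_\chi\varphi(y')|\leq\|\tau_y\varphi-\tau_{y'}\varphi\|_\infty$ shows $\tau_\chi\varphi\in C_\rmb^\rmu(X)$, so $\tau_\chi(B)\in C_\rmb^\rmu(X)\rtimes X=\re$. The main technical point is continuity into $\re_\loc$: for fixed $\theta\in C_0(X)$ the operator $\psi(p)\theta(q)$ is compact, and for a net $\chi_i\to\chi$ in $\sigma(\ca)$ we have $\tau_{\chi_i}\varphi(y)=\chi_i(\tau_y\varphi)\to\chi(\tau_y\varphi)=\tau_\chi\varphi(y)$ pointwise in $y$, uniformly bounded by $\|\varphi\|_\infty$. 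Dominated convergence upgrades this to $(\tau_{\chi_i}\varphi)(q)\to(\tau_\chi\varphi)(q)$ strongly on $L^2(X)$, and composing on the right with the compact $\psi(p)\theta(q)$ converts strong convergence into norm convergence, i.e.\ into $\re_\loc$-convergence. Density plus the bound $\|\tau_\chi(A)\|\leq\|A\|$ (valid because $\tau_\chi$ is a $C^*$-morphism on generators) extends the construction to arbitrary $A\in\ra$, and the resulting map satisfies \eqref{eq:transm} by definition.

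For \eqref{eq:kompact}, the implication $A\in\rk\Rightarrow\tau_\vkappa(A)=0$ follows from the identity $\tau_x(A)\theta(q)=\e^{\rmi xp}A(\tau_{-x}\theta)(q)\e^{-\rmi xp}$ for $\theta\in C_0(X)$: since $(\tau_{-x}\theta)(q)\to 0$ strongly as $x\to\infty$ in $X$ and $A$ is compact, $\|\tau_x(A)\theta(q)\|\to 0$; passing to the limit $x\to\vkappa$ in $\re_\loc$ yields $\tau_\vkappa(A)\theta(q)=0$ for every $\theta$, hence $\tau_\vkappa(A)=0$. For the converse, on a generator $B=\varphi(q)\psi(p)$ with $\psi\neq 0$ the condition $\tau_\vkappa(B)=0$ for all $\vkappa\in\ca^\dagger$ reduces to $\tau_\vkappa\varphi=0$; evaluating at $y=0$ gives $\vkappa(\varphi)=0$ for every $\vkappa\in\ca^\dagger$, i.e.\ $\varphi\in C_0(X)$ by \eqref{eq:cadag}, so $B\in\rk=C_0(X)\rtimes X$. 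The main obstacle is then to propagate this from generators to arbitrary $A\in\ra$: the cleanest route is to observe that $\bigcap_{\vkappa\in\ca^\dagger}\ker\tau_\vkappa$ is a closed translation-invariant ideal of $\ra$ containing $\rk$, and to identify it with $\rk$ via the standard correspondence between translation-invariant ideals of $\ca$ and of $\ca\rtimes X$, the pointwise-separation argument above ensuring the image of this ideal in $\ca/C_0(X)$ is trivial.
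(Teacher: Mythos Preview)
Your treatment of norm continuity and of the extension to $\sigma(\ca)$ is correct and is essentially the argument behind the reference the paper cites; Section~\ref{s:proofold} does not reprove those parts but concentrates entirely on the equivalence \eqref{eq:kompact}. Your forward implication of \eqref{eq:kompact} is also fine.

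For the reverse implication your route is genuinely different from the paper's. The paper reduces to the maximal case $\ca=C_\rmb^\rmu(X)$, rewrites the hypothesis as the decay condition $\lim_{x\to\infty}\|A\theta(q-x)\|=0$, and then proves compactness of $A$ by a purely analytic argument: it builds controlled approximations $A_\varepsilon$ by averaging $\e^{-\rmi kq}A\e^{\rmi kq}$ against an approximate identity whose Fourier transform has compact support (Lemma~\ref{lm:niet}), checks that these inherit the decay, and concludes via the Riesz--Kolmogorov criterion (Lemmas~\ref{lm:limT}, \ref{lm:limB}, Proposition~\ref{pr:kompact}). No crossed-product ideal theory enters.

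Your ideal-theoretic alternative can be made to work, but the last step as written has a gap. The phrase ``the standard correspondence between translation-invariant ideals of $\ca$ and of $\ca\rtimes X$'' is not a correct theorem: not every closed ideal of $\ca\rtimes X$ has the form $I\rtimes X$ for an $X$-invariant $I\subset\ca$, so you cannot simply read off that $J=\bigcap_\vkappa\ker\tau_\vkappa$ is of this form. What you actually need is that each $\tau_\vkappa$ on $\ra$ is the crossed-product extension of the $X$-equivariant morphism $\tau_\vkappa\colon\ca\to C_\rmb^\rmu(X)$, so by exactness of $\,\cdot\rtimes X$ for abelian $X$ one has $\ker\tau_\vkappa=I_\vkappa\rtimes X$ with $I_\vkappa=\ker(\tau_\vkappa|_\ca)$; it then remains to justify $\bigcap_\vkappa(I_\vkappa\rtimes X)=(\bigcap_\vkappa I_\vkappa)\rtimes X$. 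The cleanest fix is to pass to the quotient $\ra/\rk\cong C(\ca^\dagger)\rtimes X$ and invoke the known fact that for an amenable group acting on a compact space the family of orbit representations $\{\pi_\vkappa\}_{\vkappa\in\ca^\dagger}$ on $L^2(X)$ is jointly faithful on the (reduced $=$ full) crossed product; your induced maps $\bar\tau_\vkappa$ are precisely these representations. With that said, your argument goes through and is more conceptual than the paper's, at the cost of importing a nontrivial structural fact about amenable crossed products in place of the paper's hands-on estimates.
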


A new proof of \eqref{eq:kompact} will be given in Section
\ref{s:proofold}. 

\begin{remark}\label{re:GIA}{\rm The morphism $\tau_\chi$, hence
    $A_\chi$, has been defined independently of the extension by
    continuity procedure used in Theorem \ref{th:GIold}. This is
    important in concrete situations because the computation of
    $A_\chi$ does not require a knowledge of the topology of
    $\sigma(\ca)$.  }\end{remark}

If $\rc$ is a $C^*$-algebra and $I$ is a set then we denote
$\prod_{i\in I}\rc$ the $C^*$-algebra consisting of all bounded
functions $C:I\to\rc$ with the natural operations and norm
$\|C\|=\sup_i\|C(i)\|$.  It follows that the map
$\Phi(A)=(A_\vkappa)_{\vkappa\in\ca^\dagger}$ defines a morphism
\begin{equation}\label{eq:Phi}
\Phi:\ra\to {\textstyle\prod_{\vkappa\in\ca^\dagger}}\re 
\end{equation}
whose kernel is $\rk$ hence it induces an injective morphism
\begin{equation}\label{eq:Phii}
\what\Phi:\ra/\rk\to {\textstyle\prod_{\vkappa\in
    \ca^\dagger}}\re .
\end{equation}
From \eqref{eq:Phii} we get for any \emph{normal} operator $A\in\ra$
(cf.\ \cite[Th.\ 1.15]{GI4}) 
\begin{equation}\label{eq:Phiii}
\spe (A)= \overline\ccup_{\vkappa\in\ca^\dagger} \spec(A_\vkappa)
\end{equation} 
where $\overline\bigcup$ means closure of the union.  Our main result
is an improvement of this relation. 

\begin{theorem}\label{th:GIA}
  For any operator $A\in\ra$ we have
\begin{equation}\label{eq:GIA}
\spe (A)= \ccup_{\vkappa\in\ca^\dagger} \spec(A_\vkappa).
\end{equation} 
\end{theorem}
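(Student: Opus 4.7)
The plan is to sharpen the closed-union result \eqref{eq:Phiii}---which gives the statement up to a closure on the right-hand side---by establishing an automatic uniform-boundedness property for the inverses of the family $A_\vkappa-\lambda$. First, I extend the injective morphism $\what\Phi$ from \eqref{eq:Phii} to unitizations, obtaining a unital morphism
\[
\what\Phi^\sim:\ (\ra/\rk)^\sim\longrightarrow(\textstyle\prod_\vkappa\re)^\sim\subset\textstyle\prod_\vkappa\re^\sim,
\]
which remains injective (if $(A_\vkappa+\mu)_\vkappa=0$ then each $-\mu$ would lie in $\re$, but $\re$ contains no nonzero scalar, in the spirit of Remark~\ref{re:nfred}, forcing $\mu=0$ and then $\hat A=0$). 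Since an injective $*$-morphism of $C^*$-algebras is isometric and spectrum-preserving, together with spectral permanence for $(\ra/\rk)^\sim\subset\rb/\rk$ and for $(\prod\re)^\sim\subset\prod\re^\sim$ this yields
\[
\spe(A)=\spec_{\rb/\rk}(\hat A)=\spec_{\prod_\vkappa\re^\sim}\bigl((A_\vkappa)_\vkappa\bigr).
\]

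Next I unfold the right-hand side. In $\prod_\vkappa\re^\sim$, a tuple $(B_\vkappa)$ is invertible iff each $B_\vkappa$ is invertible in $\re^\sim$---equivalently, by spectral permanence, in $\rb$---and $\sup_\vkappa\|B_\vkappa^{-1}\|<\infty$. Consequently
\[
\spe(A)=\bigcup_{\vkappa\in\ca^\dagger}\spec(A_\vkappa)\;\cup\;\Pi,\qquad \Pi=\Bigl\{\lambda\notin\textstyle\bigcup_\vkappa\spec(A_\vkappa)\,:\,\sup_\vkappa\|(A_\vkappa-\lambda)^{-1}\|=\infty\Bigr\},
\]
and the theorem reduces to showing $\Pi=\emptyset$. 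I argue by contradiction: if $\lambda\in\Pi$, choose $\vkappa_n\in\ca^\dagger$ with $\|(A_{\vkappa_n}-\lambda)^{-1}\|\to\infty$ and unit vectors $u_n\in L^2$ with $\|(A_{\vkappa_n}-\lambda)u_n\|\to0$. Using the translation covariance $\tau_y(A_\vkappa)=A_{y+\vkappa}$ (p.~\pageref{p:trans}), I replace $(\vkappa_n,u_n)$ by $(\vkappa_n-y_n,\e^{-\rmi y_np}u_n)$ for a carefully chosen $y_n\in X$; then compactness of $\ca^\dagger$ and local-norm continuity of $\chi\mapsto A_\chi$ (Theorem~\ref{th:GIold}) yield a subsequence with $\vkappa_n-y_n\to\vkappa_*\in\ca^\dagger$ and $A_{\vkappa_n-y_n}\to A_{\vkappa_*}$ in $\re_\loc$.

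The main obstacle lies in choosing $y_n$ so that $v_n\doteq\e^{-\rmi y_np}u_n$ admits a nonzero weak limit $v_*$ in $L^2$, i.e.\ does not fully disperse at infinity. Once that is arranged, the splitting
\[
(A_{\vkappa_n-y_n}-\lambda)v_n=(A_{\vkappa_n-y_n}-A_{\vkappa_*})v_n+(A_{\vkappa_*}-\lambda)v_n
\]
lets one pass to the limit: the local-norm convergence $A_{\vkappa_n-y_n}\to A_{\vkappa_*}$ in $\re_\loc$ ensures $(A_{\vkappa_n-y_n}-A_{\vkappa_*})\theta(q)\to0$ in norm for $\theta\in C_\rmc(X)$, which (combined with the localization of $v_n$) kills off the first summand, while the second summand converges weakly to $(A_{\vkappa_*}-\lambda)v_*$. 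Weak lower semi-continuity of the norm then forces $(A_{\vkappa_*}-\lambda)v_*=0$, so $\lambda\in\spec(A_{\vkappa_*})$, contradicting $\lambda\in\Pi$. The delicate concentration step---either a direct argument using regularity inherited from $\|(A_{\vkappa_n}-\lambda)u_n\|\to0$ with $A\in\ra$, or a dichotomy (compactness vs.\ dispersion) followed by an ultrafilter argument---is precisely where the local-norm convergence emphasized in the remark following Theorem~\ref{th:rb} plays its essential role.
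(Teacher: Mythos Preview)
Your reduction is exactly the one in the paper: via the injective morphism $\what\Phi$ one sees that $A-\lambda$ is Fredholm iff every $A_\vkappa-\lambda$ is invertible \emph{and} $\sup_\vkappa\|(A_\vkappa-\lambda)^{-1}\|<\infty$, so the whole point is to show that this last uniform bound is automatic (your set $\Pi$ is empty). Up to here you agree with the paper verbatim (cf.\ \eqref{eq:fredholmmm}--\eqref{eq:show}).

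The gap is precisely where you yourself flag it: the ``concentration step''. You propose to translate the approximate null vectors $u_n$ so that the shifted $v_n$ have a nonzero weak limit, but you give no mechanism for producing such a translation, and none is implied by local-norm convergence of $\chi\mapsto A_\chi$ alone. A single translation will not concentrate a highly dispersed $u_n$; and a nonzero weak limit is not enough for your limit argument anyway, since to kill the term $(A_{\vkappa_n-y_n}-A_{\vkappa_*})v_n$ via local-norm convergence you need $v_n$ essentially supported in a fixed compact set, not merely weakly nonvanishing. This concentration problem is the substantive difficulty, and the paper solves it with real work: following Lindner--Seidel and \v{S}pakula--Willett, it introduces the localized lower bounds $\nu_\theta(L|\Omega)$, uses the metric sparsification property of $X$ (Lemma~\ref{lm:nuc}) to show that for controlled operators one can restrict to test functions of uniformly bounded diameter at the cost of an arbitrarily small error, passes this uniformly to all $S_\vkappa$ by approximation with controlled operators (Lemma~\ref{lm:nuca}), and then runs an iterative translation scheme (Lemma~\ref{lm:imppp}) that, combined with compactness of $\{S_\vkappa\}$ in $\rb_\loc$ (Lemma~\ref{lm:topology}), shows that $\inf_\vkappa\nu(S_\vkappa)$ is actually \emph{attained} at some $\omega\in\ca^\dagger$ (Lemma~\ref{lm:enfin}). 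This last fact is what replaces your unsubstantiated weak-limit claim: if the infimum were $0$ it would be $\nu(S_\omega)=0$, contradicting invertibility of $S_\omega$.

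In short: your outline is correct up to identifying the crux, but the crux is not a detail---it is where all the new ideas (sparsification, the $\nu_\theta$ calculus, the iterative translation lemma) enter. Without invoking something of that strength, the argument as written does not close.
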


The relation \eqref{eq:GIA} is a significative improvement of
\eqref{eq:Phiii}: the condition of normality of $A$ is eliminated and
one takes the union instead of the closure of the union. To get the
present version we use the techniques of M.~Lindner and M.~Seidel
\cite{LS} who solved a problem left open by V.~Rabinovich, S.~Roch,
and B.~Silberman, see \cite{RRS} and their earlier papers (one may
also find in \cite[\S1.4]{GI4} a detailed discussion of the earlier
literature). In their theory the Euclidean space $X$ is replaced by
the abelian group $\Z^d$ and instead of $\ra$ they work with algebras
similar to $\ell_\infty(X)\rtimes X$ acting in $\ell_p$ spaces of
Banach space valued functions.  Their results have been extended by
J.~\v{S}pakula and R.~Willett \cite{SW} (see also \cite{S}) to a
general class of discrete metric spaces (without any group structure)
under the condition that the space has the \emph{property A} in the
sense of Guoliang Yu. This property also plays a fundamental r\^ole in
\cite{G} where Theorem \ref{th:GIold} is extended to not necessarily
discrete metric spaces. \v{S}pakula and Willett also use the
\emph{metric sparsification property} of Chen, Tessera, Wang, and Yu
\cite{CTWY} and we follow them in this respect. Of course, in the
Euclidean case this could be replaced by an ad hoc construction, as in
\cite{LS}, but this simplifies a lot the argument and puts things in
the proper perspective. It seems clear to us that the proofs given in
Section \ref{s:proof} work for a class of (non-abelian) groups much
more general then the Euclidean spaces, e.g. locally compact groups
with finite asymptotic dimension, and this together with \cite[Th.\
6.8]{G} would give an analog of Theorem \ref{th:GIA} for such groups.
This would cover magnetic Schr\"odinger operators in the framework of
\cite[\S 5]{GI2}. We shall treat such extensions in a later
publication.  For an alternative approach to these topics, see the
paper \cite{NP} by V.\ Nistor and N.\ Prudhon.

In \cite{GI1,GI4} the space $X$ is an arbitrary locally compact
abelian group. Although the proofs in Section \ref{s:proof} clearly
extend to a more general class of non-abelian groups, we decided to
consider here only the case $X=\R^d$ which does not require much
formalism and the applications we have in mind concern only
differential operators on Euclidean spaces.

\begin{remark}\label{re:cbu}{\rm Theorem \ref{th:rb} is the particular
    case of Theorem \ref{th:GIA} corresponding to
    $\ca=C_\rmb^\rmu(X)$. The procedure of going from the characters
    of $C_\rmb^\rmu(X)$ to ultrafilters is explained in \S\ref{s:app}:
    for any $\chi\in\ca^\dagger$ there is $\vkappa\in X^\dagger$ such
    that $\chi(\varphi)=\lim_{x\to\vkappa}\varphi(x)$ for all
    $\varphi\in\ca$.  }\end{remark}

\begin{remark}\label{re:localize}{\rm The operator $A_\vkappa$ with
    $\vkappa\in\ca^\dagger$ will be called \emph{localization at
      $\vkappa$ of $A$} and we refer generically to the operators
    $A_\vkappa$ as \emph{localizations at infinity of $A$}. By
    \eqref{eq:transm} and a notation introduced page
    \pageref{p:localiz}, if $A$ is the pseudodifferential operator
    $\sum_i\varphi_i(q)\psi_i(p)$ then $A_\vkappa$ is the
    pseudodifferential operator $\sum_i\varphi_i(q+\vkappa)\psi_i(p)$.
    Theorem \ref{th:GIA} says that the essential spectrum of $A$ is
    determined by its localizations at infinity.  Note that this is
    not true in some simple and physically significative cases like
    the Stark Hamiltonian, cf.\ \S\ref{sss:stark}, and the constant
    magnetic field case. The point is that we consider only the
    ``infinity'' defined by the position observable $q$, while for
    other Hamiltonians one has to take into consideration the
    contribution of other regions at infinity in phase space.
  }\end{remark}

\begin{remark} {\rm If $\varphi\in C^\rmu_\rmb(X)$ and
    $\psi\in C_0(X)$ then the operator $A=\varphi(q)\psi(p)$ belongs
    to $\re$ hence it has localizations at infinity. On the other
    hand, if $\varphi(x)=\e^{\rmi x^2}$ and
    $0\neq\cf\psi\in C^\infty_\rmc(X)$ then $A\nin\re$ \cite[Ex.\
    7.2]{G}. The importance of the uniform continuity condition can
    also be seen as follows: it is clear that $A$ is an integral
    operator with kernel of the form $k(x,y)=\e^{\rmi x^2}\theta(x-y)$
    where $\theta\in C^\infty_\rmc(X)$, so $k$ is controlled bounded
    and $C^\infty$ but not uniformly continuous. Moreover, the
    operator $A$ has no localizations at infinity. Indeed, we have
    $A_x=\e^{\rmi xp}A\e^{-\rmi xp}=\e^{\rmi(x^2+2qx)}A$ hence
    $\|A_xu\|=\|Au\|\neq0$ if $u\neq0$ and clearly
    $\wlim_{x\to\infty}A_xu=0$. Thus if $u\neq0$ then a sequence of
    vectors of the form $A_{x_n}u$ with $x_n\to\infty$ cannot converge
    strongly.}
\end{remark}

\begin{remark}\label{re:restrict}{\rm The character space
    $\ca^\dagger$ may sometimes be reduced to a much smaller set by
    the following procedure. It may happen that there is
    $\Omega\subset\ca^\dagger$ such that for each
    $\vkappa\in\ca^\dagger$ the morphism $\tau_\vkappa$ may be
    factorized $\tau_\vkappa=\eta\tau_\omega$ with $\omega\in\Omega$
    and $\eta$ a morphism. If this is the case then one has
    $\spec(A_\vkappa) =\spec(\eta(\tau_\omega(A))) \subset
    \spec(\tau_\omega(A))=\spec(A_\omega)$
    hence \eqref{eq:GIA} will hold with $\ca^\dagger$ replaced by
    $\Omega$. We will see an example of this mechanism in
    \S\ref{ss:hom}.  }\end{remark}

\section{Proof of Theorem \ref{th:GIold}.}\label{s:proofold}
\protect\setcounter{equation}{0}

We shall give here a proof of \eqref{eq:kompact} which requires much
less formalism and is simpler than that from \cite{GI4}. Clearly it
suffices to consider the case $\ca=C_\rmb^\rmu(X)$.

In this section we abbreviate $\ce=C_\rmb^\rmu(X)$.  We denote $\rx$
the uniform compactification of $X$, i.e. the character space
$\sigma(\ce)$ of the algebra $\ce$, and let
$\cx=\rx\setminus X=\ce^\dagger$ be the boundary of $X$ in $\rx$.
Then, according to the first part of Theorem \ref{th:GIold}, for each
$A\in\re$ there is a continuous map
$\rx\ni\chi\mapsto A_\chi\in\re_\loc$ such that
$A_x=\e^{\rmi xp}A\e^{-\rmi xp}$ if $x\in X$. We will show that
$A\in\rk$ if $A_\chi=0$ for all $\chi\in\cx$.

Let us state this in more explicit terms. Let $\theta\in C_0(X)$ be a
strictly positive function. Then $\chi\mapsto A_\chi\theta(q)$ is a
norm continuous function $\rx\to\rb$ and, since $\rx$ is compact, it
is uniformly continuous. We assume that this function is zero on the
boundary of $X$ in its compactification $\rx$. This means in fact that
the restriction to $X$ of the function $\chi\mapsto A_\chi\theta(q)$
belongs to the space $C_0(X,\rb)$ of continuous functions $X\to\rb$
which are of class $C_0$. Even more explicitly, this means that
\[
x\mapsto A_x\theta(q)=\e^{\rmi xp}A\e^{-\rmi xp}\theta(q)
\]
is a norm continuous function $X\to\rb$ which tends to zero at
infinity. Or
\[
\lim_{x\to\infty}\|A\e^{-\rmi xp}\theta(q)\|=0. 
\]
Of course, this relation will hold for any $\theta\in C_0(X)$: first
we get it for $\theta$ replaced by any continuous function with
compact support and then we extend it to any function in $C_0(X)$ by
density and uniform boundedness of $A\e^{-\rmi xp}$. Since
$\e^{-\rmi xp}\theta(q)\e^{\rmi xp}=\theta(q-x)$ we get
\begin{equation}\label{eq:zero}
\lim_{x\to\infty}\|A\theta(q-x)\|=0 \quad \forall \theta\in C_0(X). 
\end{equation}
To summarize, we have to prove the following: if $A\in\re$, which
means 
\begin{equation}\label{eq:GIchar}
  \lim_{a\to0}\|(\e^{\rmi ap}-1)A^{(*)}\|=0
  \text{ and } \lim_{a\to0}\|\e^{-\rmi aq}A\e^{\rmi aq}-A\|=0
\end{equation}
and if \eqref{eq:zero} is satisfied, then $A\in\rk$. By using the
Riesz-Kolmogorov characterization of compactness, and by taking into
account that $A$ is compact if and only if $A^*$ is compact, we see
that it suffices to prove 
\[
\lim_{a\to0}\|A(\e^{\rmi ap}-1)\|=0
  \text{ and } \lim_{a\to0}\|A(\e^{\rmi aq}-1)\|=0.
\] 
But the first of these relations is automatically satisfied because of
\eqref{eq:GIchar} hence in fact we just have to prove that
$\lim_{a\to0}\|A(\e^{\rmi aq}-1)\|=0$ if \eqref{eq:zero} and
\eqref{eq:GIchar} are satisfied.

\begin{lemma}\label{lm:limT}
  Let $\phi_r$ be the characteristic function of the region
  $|x|>r$. Then for any $A\in\rb$ we have
\begin{equation}\label{eq:limT}
\lim_{a\to0}\|A(\e^{\rmi aq}-1)\|=0  \Longleftrightarrow
\lim_{r\to\infty}\|A\phi_r(q)\|=0
\end{equation}
\end{lemma}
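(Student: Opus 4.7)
The statement is an equivalence of two localization-at-infinity conditions for $A$, and I would prove the two implications separately; the backward direction is elementary while the forward direction needs a short Fourier-analytic mollification.

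For $(\Leftarrow)$, given $\epsilon>0$, the hypothesis provides $r$ with $\|A\phi_r(q)\|<\epsilon/3$. Splitting $1=\phi_r(q)+(1-\phi_r(q))$ and expanding
\[
A(\e^{\rmi aq}-1)=A\phi_r(q)(\e^{\rmi aq}-1)+A(1-\phi_r(q))(\e^{\rmi aq}-1),
\]
the first term is controlled by $2\|A\phi_r(q)\|<2\epsilon/3$ and the second by $\|A\|\sup_{|x|\leq r}|\e^{\rmi ax}-1|\leq\|A\|\,|a|\,r$, which is $<\epsilon/3$ once $|a|$ is small enough. That is all one needs.

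For $(\Rightarrow)$, given $\epsilon>0$, use the hypothesis to choose $\delta>0$ with $\|A(\e^{\rmi aq}-1)\|<\epsilon$ for $|a|<\delta$, and pick $g\in C_\rmc^\infty(X)$ with $g\geq 0$, $\supp g\subset\{|a|<\delta\}$, and $\int g\,\d a=1$. Set $f(x)=\int g(a)\e^{\rmi ax}\,\d a$; then $f$ is Schwartz (hence lies in $C_0(X)$) and $f(0)=1$. The pointwise identity $f(x)-1=\int g(a)(\e^{\rmi ax}-1)\,\d a$ combined with Fubini on matrix elements $\langle u,A(f(q)-1)v\rangle$ gives
\[
\|A-Af(q)\|\leq \int g(a)\|A(\e^{\rmi aq}-1)\|\,\d a\leq\epsilon.
\]
Since $f\in C_0(X)$, the operator $Af(q)\phi_r(q)$ has norm at most $\|A\|\sup_{|x|>r}|f(x)|\to 0$ as $r\to\infty$, so
\[
\|A\phi_r(q)\|\leq \|A-Af(q)\|+\|A\|\sup_{|x|>r}|f(x)|\xrightarrow[r\to\infty]{}\epsilon.
\]
As $\epsilon$ was arbitrary, $\|A\phi_r(q)\|\to 0$.

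The only minor technical point is the justification of the weak vector identity $A(f(q)-1)=\int g(a)A(\e^{\rmi aq}-1)\,\d a$: since $a\mapsto\e^{\rmi aq}$ is merely strongly continuous, one cannot literally take a norm-Bochner integral, but working at the level of matrix elements and invoking Fubini sidesteps this cleanly. I do not foresee any other obstacle.
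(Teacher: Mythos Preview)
Your proof is correct. Both directions are clean and the mollification for $(\Rightarrow)$ works exactly as you say; in fact the Bochner integral $\int g(a)A(\e^{\rmi aq}-1)v\,\d a$ already makes sense in $L^2$ because $a\mapsto(\e^{\rmi aq}-1)v$ is strongly continuous and $g$ has compact support, so the matrix-element detour is not even needed.

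The paper takes a different route. It invokes an external Riesz--Kolmogorov type criterion (\cite[Cor.~3.3]{GI30}) which says that $\lim_{a\to0}\|A(\e^{\rmi aq}-1)\|=0$ is equivalent to: for every $\varepsilon>0$ there is $\theta\in C_\rmc(X)$ with $\|A(1-\theta)(q)\|<\varepsilon$. Both implications in \eqref{eq:limT} are then reduced to comparing $\phi_r$ with such a $\theta$, which is a two-line positivity argument. Your approach is more self-contained: your $(\Rightarrow)$ argument is essentially a direct proof of (one half of) that cited criterion, producing the cutoff $f\in C_0(X)$ explicitly as a Fourier transform of a bump function, and your $(\Leftarrow)$ bypasses the cutoff reformulation entirely via the elementary estimate $\sup_{|x|\le r}|\e^{\rmi ax}-1|\le|a|r$. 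The paper's version is shorter on the page but outsources the real content; yours is longer but stands on its own.
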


\begin{proof}
  From \cite[Cor.\ 3.3]{GI30} with $T=A^*$ it follows that
  $\lim_{a\to0}\|A(\e^{\rmi aq}-1)\|=0$ if and only if
  $\forall\varepsilon>0\ \exists\theta\in C_\rmc(X)$ such that
  $\|A\theta^\perp(q)\|<\veps$; here $\theta^\perp=1-\theta$. If
  $\psi=|\theta^\perp|^2$ then
\[
\|A\theta^\perp(q)\|^2=\|A\psi(q)A^*\|\geq
\|A\phi_r(q)A^*\|=\|A\phi_r\|^2
\]
if $\supp\theta\subset B_0(r)$. This proves $\Rightarrow$ in
\eqref{eq:limT}.  Reciprocally, if $\|A\phi_r(q)\|<\varepsilon$ then
choose $\theta$ continuous such that $\theta(x)=1$ for $|x|<r$,
$\theta(x)=0$ for $|x|>2r$, and $0\leq\theta\leq1$.
\end{proof}

Let $B_x(r)=\{y\in X\mid |y-x|<r\}$ and $B_x=B_x(1)$. We denote
$1_{B_x}$ the characteristic function of $B_x$ and often identify
$1_{B_x}(q)=1_{B_x}$.

\begin{lemma}\label{lm:limB}
For any $A\in\rb$ we have
\begin{equation}\label{eq:limB}
  \lim_{x\to\infty}\|A1_{B_x}\|=0 \Longleftrightarrow
  \lim_{x\to\infty}\|A\theta(q-x)\|=0\ \forall\theta\in C_0(X).
\end{equation}
\end{lemma}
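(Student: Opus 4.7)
My plan is to prove the two directions separately. The implication $\Leftarrow$ is easy: fix $\theta\in C_\rmc(X)$ with $0\leq\theta\leq 1$ and $\theta=1$ on the closed unit ball $\ol{B_0}$. Then for every $x\in X$ one has $\theta(y-x)=1$ whenever $y\in B_x$, so $1_{B_x}=\theta(q-x)1_{B_x}$ as multiplication operators and hence $\|A1_{B_x}\|\leq\|A\theta(q-x)\|\cdot\|1_{B_x}\|=\|A\theta(q-x)\|\to 0$ as $x\to\infty$.

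For the hard direction $\Rightarrow$, the main obstacle is that a general $\theta\in C_0(X)$ is not compactly supported. The plan is first to reduce to the case $\theta\in C_\rmc(X)$: if $\theta_n\in C_\rmc(X)$ with $\|\theta-\theta_n\|_\infty\to 0$, then $\|A\theta(q-x)\|\leq\|A\theta_n(q-x)\|+\|A\|\,\|\theta-\theta_n\|_\infty$ uniformly in $x$, so proving the result for all compactly supported $\theta$ suffices.

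So assume $\supp\theta\subset B_0(R)$ for some $R>0$, which gives $\supp\theta(\cdot-x)\subset B_x(R)$. The next step is a finite covering argument: by a standard volume estimate in $\R^d$ there is an integer $N=N(R)$, independent of $x$, and points $y_1(x),\dots,y_N(x)\in B_x(R)$ such that $B_x(R)\subset\bigcup_{i=1}^N B_{y_i(x)}$. Pick a partition of unity $\{\alpha_i\}$ on $B_x(R)$ subordinate to this cover, and set $\psi_i=\alpha_i\theta(\cdot-x)$; then $\psi_i$ is supported in $B_{y_i(x)}$ with $\|\psi_i\|_\infty\leq\|\theta\|_\infty$, and $\theta(q-x)=\sum_{i=1}^N\psi_i(q)$. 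Writing $\psi_i(q)=1_{B_{y_i(x)}}(q)\psi_i(q)$ gives
\[
\|A\theta(q-x)\|\leq\sum_{i=1}^N\|A\,1_{B_{y_i(x)}}\|\cdot\|\psi_i\|_\infty
\leq\|\theta\|_\infty\sum_{i=1}^N\|A\,1_{B_{y_i(x)}}\|.
\]
Since $|y_i(x)|\geq|x|-R$, each $y_i(x)\to\infty$ as $x\to\infty$, and the number of terms $N$ is fixed; so by the hypothesis the right-hand side tends to zero. This completes the proof.
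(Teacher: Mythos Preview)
Your proof is correct and follows the same overall strategy as the paper: the $\Leftarrow$ direction is handled by choosing a single $\theta\in C_\rmc(X)$ dominating $1_B$, and the $\Rightarrow$ direction by reducing to compactly supported $\theta$ and covering the translated support by a fixed finite number of unit balls whose centers go to infinity with $x$.

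The one technical difference is in how the covering is exploited. You build a partition of unity subordinate to the cover, write $\theta(q-x)=\sum_i\psi_i(q)$, and apply the triangle inequality to get $\|A\theta(q-x)\|\leq\|\theta\|_\infty\sum_i\|A1_{B_{y_i(x)}}\|$. The paper instead fixes a finite set $Z$ with $\supp\theta\subset\bigcup_{z\in Z}B_z$ (so the centers are simply $x+z$), uses the pointwise bound $|\theta(q-x)|^2\leq C\sum_{z\in Z}1_{B_{x+z}}(q)$, and then the $C^*$-identity $\|A\theta(q-x)\|^2=\|A\,|\theta(q-x)|^2A^*\|$ together with positivity to obtain $\|A\theta(q-x)\|^2\leq C\sum_{z\in Z}\|A1_{B_{x+z}}\|^2$. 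Both implementations are elementary; the paper's avoids partitions of unity at the cost of a small operator-algebraic trick, while yours is perhaps more transparent but requires keeping track of the $x$-dependent centers.
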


\begin{proof}
  If $B=B_0$ then $0\leq 1_{B_x}(q)=1_B(q-x)\leq \theta(q-x)$ if
  $\theta\in C_0(X)$ and $\theta\geq1$ on the unit ball, hence the
  implication $\Leftarrow$ is obvious. Reciprocally, it suffices to
  show that the left hand side of \eqref{eq:limB} implies the right
  hand side if $\theta$ has compact support. Then if $Z$ is a finite
  set such that $\supp\theta\subset\cup_{z\in Z}B_z$ then there is a
  number $C$ such that
\[
|\theta(q-x)|^2\leq C\sum_{z\in Z}1_{B_z}(q-x)
\leq C\sum_{z\in Z}1_{B_{x+z}}(q)
\]
hence
\begin{align*}
\|A\theta(q-x)\|^2 &=\|A|\theta(q-x)|^2A^*\|
\leq C\sum_{z\in Z}\|A1_{B_{x+z}}(q)A^*\| \\
&=C\sum_{z\in Z}\|A1_{B_{x+z}}(q)\|^2
\end{align*}
and the right hand side tends to zero as $x\to\infty$. 
\end{proof}

\begin{lemma}\label{lm:niet}
  There is a family of linear maps $\Phi_\varepsilon:\re\to\re$, with
  $\varepsilon>0$, such that for any $A\in\re$ the operator
  $A_\varepsilon\doteq\Phi_\varepsilon(A)$ is controlled,
  $\|A_\varepsilon\|\leq \|A\|$, and
  $\lim_{\varepsilon\to0}A_\varepsilon=A$ in norm. Moreover,
  $\Phi_\varepsilon(A\varphi(q))=\Phi_\varepsilon(A)\varphi(q)$ for
  all $\varphi\in C_0(X)$.
\end{lemma}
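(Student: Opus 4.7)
The construction I would use is to define $\Phi_\varepsilon$ by averaging the unitary conjugation $A\mapsto\e^{\rmi aq}A\e^{-\rmi aq}$ against a well-chosen probability density on $X$. Take a nonnegative function $\phi\in L^1(X)$ with $\int\phi=1$ whose Fourier transform $\hat\phi$ has compact support; such a $\phi$ exists, for instance the tensor product of one-dimensional Fej\'er kernels, for which $\hat\phi(\xi)=\prod_{j=1}^d(1-|\xi_j|)_+$ is supported in $[-1,1]^d$. Set $\phi_\varepsilon(a)=\varepsilon^{-d}\phi(a/\varepsilon)$, so that $\phi_\varepsilon\geq 0$, $\int\phi_\varepsilon=1$, and $\widehat{\phi_\varepsilon}(\xi)=\hat\phi(\varepsilon\xi)$ is supported in the ball of radius $r_\varepsilon=\sqrt d/\varepsilon$. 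Define
\[
\Phi_\varepsilon(A)\doteq\int_X\phi_\varepsilon(a)\,\e^{\rmi aq}A\e^{-\rmi aq}\,\rmd a,
\]
a Bochner integral in $\rb$; it is well defined because for $A\in\re$ the map $a\mapsto \e^{\rmi aq}A\e^{-\rmi aq}$ is norm continuous (this is precisely the second condition in \eqref{eq:ellalg}) and of constant norm $\|A\|$. A short check using the Weyl relation $\e^{\rmi bp}\e^{\rmi aq}=\e^{\rmi a\cdot b}\e^{\rmi aq}\e^{\rmi bp}$ together with \eqref{eq:ellalg} shows that each $\e^{\rmi aq}A\e^{-\rmi aq}$ again lies in $\re$, and since $\re$ is norm-closed, $\Phi_\varepsilon(A)\in\re$.

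The three algebraic properties are essentially immediate. The norm bound $\|\Phi_\varepsilon(A)\|\leq\|\phi_\varepsilon\|_{L^1}\|A\|=\|A\|$ is built in through the positivity and total mass one of $\phi_\varepsilon$. The intertwining $\Phi_\varepsilon(A\varphi(q))=\Phi_\varepsilon(A)\varphi(q)$ for $\varphi\in C_0(X)$ follows by pulling $\varphi(q)$ through $\e^{-\rmi aq}$, with which it commutes, and out of the integral. For the controlled property, note that the distributional kernel of $\e^{\rmi aq}A\e^{-\rmi aq}$ equals $\e^{\rmi a(x-y)}k_A(x,y)$, so that of $\Phi_\varepsilon(A)$ is $\widehat{\phi_\varepsilon}(y-x)\,k_A(x,y)$, which vanishes for $|x-y|>r_\varepsilon$. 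I would make this rigorous by approximating $A$ in norm by operators $A_n\in\re$ with bounded uniformly continuous controlled kernels (which are dense in $\re$ by \cite[Prop.~6.5]{G}), applying the kernel identity to the $A_n$, and passing to the limit; the crucial point is that $r_\varepsilon$ depends only on $\varepsilon$, not on $A_n$, so the limit kernel has the same support property.

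For the norm convergence $\Phi_\varepsilon(A)\to A$, write
\[
\Phi_\varepsilon(A)-A=\int_X\phi_\varepsilon(a)\bigl(\e^{\rmi aq}A\e^{-\rmi aq}-A\bigr)\,\rmd a.
\]
Given $\delta>0$, use the second condition in \eqref{eq:ellalg} to pick $\eta>0$ with $\|\e^{\rmi aq}A\e^{-\rmi aq}-A\|<\delta/2$ for $|a|<\eta$. Splitting the integral at $|a|=\eta$ bounds the near piece by $\delta/2$ and the tail piece by $2\|A\|\int_{|a|\geq\eta}\phi_\varepsilon(a)\,\rmd a=2\|A\|\int_{|b|\geq\eta/\varepsilon}\phi(b)\,\rmd b$, which tends to $0$ as $\varepsilon\to 0$ since $\phi\in L^1(X)$.

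The main tension to resolve is the simultaneous need for $\phi_\varepsilon$ to be a probability density (for the sharp bound $\|\Phi_\varepsilon\|\leq 1$, without which $A_\varepsilon\to A$ is not immediate) \emph{and} for $\widehat{\phi_\varepsilon}$ to be compactly supported (for the controlled property). A Gaussian mollifier satisfies the first but not the second. The Fej\'er-type construction is the only nonroutine ingredient; everything else is a bookkeeping computation on the Bochner integral.
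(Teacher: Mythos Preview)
Your proof is correct and follows essentially the same route as the paper: both define $\Phi_\varepsilon$ by averaging the conjugation $A\mapsto\e^{\rmi aq}A\e^{-\rmi aq}$ against a nonnegative $L^1$ density with integral one and compactly supported Fourier transform, then read off the contraction bound, the intertwining with $\varphi(q)$, the controlled property from the Fourier-side cutoff, and the norm convergence from the second condition in \eqref{eq:ellalg}. The only cosmetic differences are that the paper establishes the controlled property by a direct spectral-measure computation rather than your kernel-approximation argument, and checks $\Phi_\varepsilon(\re)\subset\re$ at the level of the integral rather than pointwise on the integrand.
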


\begin{proof}
The next argument is based on an idea from the proof of Proposition
4.11 from \cite{GG}. To each $\xi\in L^1(X)$ real we associate a map
$\Phi_\xi:\rb\to\rb$ defined by
\[
\Phi_\xi(A)=\int_X \e^{-\rmi kq} A \e^{\rmi kq} \xi(k) \d k . 
\]
Clearly $\Phi_\xi$ is linear and norm continuous, in fact
$\|\Phi_\xi(A)\|\leq \|\xi\|_{L^1}\|A\|$. It is also clear that
$\Phi_\xi(A\varphi(q))=\Phi_\xi(A)\varphi(q)$ and
$\Phi_\xi(\varphi(q)A)=\varphi(q)\Phi_\xi(A)$ if
$\varphi\in L^\infty(X)$.

Let us write $A\in C^\rmu(q)$ if the second condition in
\eqref{eq:GIchar} is satisfied. Since
$\e^{-\rmi aq}\Phi_\xi(A)\e^{\rmi aq} = \Phi_\xi(\e^{-\rmi
  aq}A\e^{\rmi aq})$
we obviously have $A\in C^\rmu(q)\Rightarrow\Phi_x(A)\in C^\rmu(q)$.
Then by using the relation
$\e^{\rmi ap}\e^{-\rmi kq}=\e^{-\rmi ka}\e^{-\rmi kq}\e^{\rmi ap}$ we
get
\begin{align*}
(\e^{\rmi ap}-1)\Phi_\xi(A)
&= \int_X \e^{-\rmi kq} (\e^{\rmi ap}-1)A \e^{\rmi kq} \xi(k) \d k \\
&+\int_X (\e^{-\rmi ka}-1)\e^{-\rmi kq}\e^{\rmi ap}A \e^{-\rmi kq} \xi(k) \d k
\end{align*}
so that
\[
\|(\e^{\rmi ap}-1)\Phi_\xi(A)\| \leq
\|(\e^{\rmi ap}-1)A\| \|\xi\|_{L^1}
+\|A\|\int_X |\e^{-\rmi ka}-1| \, |\xi(k)| \d k. 
\]
Note also that $\Phi_\xi(A)^*=\Phi_\xi(A^*)$. Thus we see that if $A$
satisfies the first condition in \eqref{eq:GIchar} then $\Phi_\xi(A)$
satisfies it too. So, we proved that \emph{$\Phi_\xi$ leaves $\re$
  invariant}.

Now we prove that \emph{if the Fourier transform $\hat\xi$ of $\xi$
  has compact support then $\Phi_\xi(A)$ is a controlled operator, for
  any $A\in\rb$}. The next computation is slightly formal but easy to
justify \cite[Prop.\ 4.11]{GG}. If $\varphi,\psi$ are bounded
continuous functions then $\varphi(q)=\int_X \varphi(x) E(\d x)$ where
$E$ is the spectral measure of the observable $q$ and similarly for
$\psi$. Then 
\begin{align*}
\varphi(q) & \Phi_\xi(A)\psi(q) =\int_{X} 
\varphi(q)\e^{-\rmi kq}A\e^{\rmi kq}\psi(q)\xi(k)\d k \\
&=\int_{X^3} 
\varphi(x)\e^{-\rmi kx} E(\d x)AE(\d y)\e^{\rmi ky}\psi(y)\xi(k)\d k
  \\
&=\int_{X^2} \varphi(x) \psi(y) \hat\xi(x-y) E(\d x)AE(\d y) .
\end{align*}
So if $\hat\xi(z)=0$ for $|z|>R$ and if the distance between the
supports of $\vphi$ and $\psi$ is $>R$ then
$\varphi(q)\Phi_\xi(A)\psi(q) =0$, hence  $\Phi_\xi(A)$ is a
controlled operator. 

Let us fix a positive function $\xi$ such that $\int_X\xi(k)\d k=1$
which is the Fourier transform of a function with compact support.
For $\varepsilon>0$ we set
$\xi_\varepsilon(k)=\varepsilon^{-d}\xi(k/\varepsilon)$, function
whose Fourier transform is $x\mapsto\hat\xi(\varepsilon x)$ which is
also of compact support. If we set
$\Phi_\varepsilon=\Phi_{\xi_\varepsilon}$ then
\[
\Phi_\varepsilon(A)=
\int_X \e^{-\rmi kq} A \e^{\rmi kq} \xi_\varepsilon(k) \d k 
=\int_X \e^{-\rmi\varepsilon kq} A \e^{\rmi\varepsilon kq} \xi(k) \d k 
\] 
hence clearly $\lim_{\varepsilon\to0}\Phi_\varepsilon(A)=A$ in norm if
$A\in C^\rmu(q)$.
\end{proof}

\begin{proposition}\label{pr:kompact}
  If $A\in\re$ then $A\in\rk$ if and only if
  $\lim_{x\to\infty}\|A1_{B_x}\|=0$.
\end{proposition}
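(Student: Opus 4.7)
The plan is to handle the two implications separately. For the forward direction, assume $A\in\rk$. Since $\|1_{B_x}u\|^2=\int_{B_x}|u|^2\to 0$ as $|x|\to\infty$ for every $u\in L^2$, the operators $1_{B_x}(q)$ tend strongly to $0$ while remaining uniformly bounded; because $A^*$ is also compact, this forces $\|1_{B_x}A^*\|\to 0$ and hence $\|A1_{B_x}\|\to 0$ after taking adjoints.

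For the converse, assume $A\in\re$ and $\lim_{x\to\infty}\|A1_{B_x}\|=0$. The discussion preceding Lemma~\ref{lm:limT} reduces the compactness of $A$ to $\lim_{a\to 0}\|A(\e^{\rmi aq}-1)\|=0$ (the analogous relation with $\e^{\rmi ap}$ being built into $A\in\re$), and this in turn is equivalent by Lemma~\ref{lm:limT} to $\lim_{r\to\infty}\|A\phi_r(q)\|=0$; so it is enough to establish this last limit.

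To do so I would approximate $A$ by the controlled operators $A_\varepsilon=\Phi_\varepsilon(A)$ provided by Lemma~\ref{lm:niet}, say of propagation $R_\varepsilon$, and satisfying $\|A-A_\varepsilon\|\to 0$. Two ingredients are needed. First, the intertwining identity $\Phi_\varepsilon(T\varphi(q))=\Phi_\varepsilon(T)\varphi(q)$ in fact holds for any bounded measurable $\varphi$ (the proof of Lemma~\ref{lm:niet} only uses $\e^{-\rmi kq}\varphi(q)\e^{\rmi kq}=\varphi(q)$), so
\[
\|A_\varepsilon 1_{B_x}\|=\|\Phi_\varepsilon(A\,1_{B_x})\|\leq\|A1_{B_x}\|\xrightarrow[|x|\to\infty]{}0
\]
uniformly in $\varepsilon$. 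Second, for any controlled operator $T$ of propagation $R$ the bound
\[
\|T\phi_r(q)\|\leq N_R\sup_{|x|>r-c_d}\|T1_{B_x}\|
\]
can be obtained by partitioning $\{|y|>r\}$ into small cubes $Q_k$ and colouring them with $N_R$ colours so that any two same-coloured cubes are more than $2R$ apart: the ranges of the operators $T1_{Q_k}$ for same-coloured $k$ are then supported in pairwise disjoint sets and satisfy the Pythagoras-type estimate
\[
\Bigl\|\sum_{k\in\text{colour }c}T1_{Q_k}u\Bigr\|^2=\sum_{k\in\text{colour }c}\|T1_{Q_k}u\|^2\leq\sup_k\|T1_{Q_k}\|^2\,\|u\|^2,
\]
and a triangle inequality over the $N_R$ colours finishes the estimate. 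Applied to $T=A_\varepsilon$, and combined with $\|A\phi_r(q)\|\leq\|A-A_\varepsilon\|+\|A_\varepsilon\phi_r(q)\|$ by first choosing $\varepsilon$ small and then $r$ large, this yields $\|A\phi_r(q)\|\to 0$.

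The main obstacle is the colouring/almost-orthogonality step: one must decompose $\{|x|>r\}$ into uniformly bounded pieces and colour them with a number of colours depending only on $R_\varepsilon$ and the dimension, in such a way that the images under $T$ of the characteristic functions of same-coloured pieces acquire pairwise disjoint supports. Everything else is a routine assembly of Lemmas~\ref{lm:limT} and~\ref{lm:niet} with direct manipulations of the $\Phi_\varepsilon$ maps.
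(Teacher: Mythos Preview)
Your proof is correct and follows essentially the same route as the paper: approximate $A$ by the controlled operators $A_\varepsilon$ of Lemma~\ref{lm:niet}, pass from $\|A_\varepsilon 1_{B_x}\|\to 0$ to $\|A_\varepsilon\phi_r\|\to 0$, and conclude via Lemma~\ref{lm:limT}. The only differences are that the paper outsources the controlled-operator step (your colouring/almost-orthogonality argument) to \cite[Lem.~3.8]{G}, and detours through Lemma~\ref{lm:limB} because it states the intertwining identity of $\Phi_\varepsilon$ only for $\varphi\in C_0(X)$; your observation that it holds for bounded measurable $\varphi$ and your explicit colouring make the argument self-contained.
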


\begin{proof}
  If $A\in\rk$ then $\lim_{x\to\infty}\|A1_{B_x}\|=0$ is true if $A$
  has rank $1$, hence if $A$ is compact. Reciprocally, let $A\in\re$
  such that $\lim_{x\to\infty}\|A1_{B_x}\|=0$ and let $A_\varepsilon$
  be as in Lemma \ref{lm:niet}. Since $\|A_\varepsilon-A\|\to0$ as
  $\varepsilon\to0$ it suffices to show that $A_\varepsilon$ is
  compact. The operator $A_\varepsilon$ belongs to $\re$ and is
  controlled. Moreover, if $\theta\in C_0(X)$ then
  $A_\varepsilon\theta(q-x)=\Phi_\varepsilon(A\theta(q-x))$ hence
  $\|A_\varepsilon\theta(q-x)\|\leq\|A\theta(q-x)\|$ by Lemma
  \ref{lm:niet} and so
  $\lim_{x\to\infty}\|A_\varepsilon\theta(q-x)\|=0$ by Lemma
  \ref{lm:limB} hence $\lim_{x\to\infty}\|A_\varepsilon1_{B_x}\|=0$ by
  the same lemma. Since $A_\varepsilon\in\re$ the operators
  $\theta(q)A_\varepsilon$ and $A_\varepsilon\theta(q)$ are compact if
  $\theta\in C_\rmc(X)$. Thus, due to \cite[Lem.\ 3.8]{G},
  $A_\varepsilon$ is a compact operator. The quoted lemma says that
  $\lim_{r\to\infty}\|A_\varepsilon\phi_r\|=0$ holds because
  $\lim_{x\to\infty}\|A_\varepsilon1_{B_x}\|=0$ and $A_\varepsilon$ is
  controlled. Then we get compactness by Lemma \ref{lm:limT}.
\end{proof}

\section{Proof of Theorem \ref{th:GIA}.}\label{s:proof}
\protect\setcounter{equation}{0}

The algebra $\ra$ has no unit and its unitization may be identified
with the $C^*$-subalgebra $\ra_1=\ra+\C\subset\rb$. Then the map
$\Phi$ introduced in \eqref{eq:Phi} extends to a unital morphism
$\ra_1\to\prod_{\vkappa\in\ca^\dagger}\rb$, that we also denote
$\Phi$, and the kernel of this extension is $\rk$.  If
$S=A-\lambda$ with $A\in\ra$ and $\lambda\in\C$ then
$S_x=\e^{\rmi xp}S\e^{-\rmi xp}=\e^{\rmi xp}A\e^{-\rmi xp} -\lambda$
hence the map $x\mapsto S_x\in\rb$ extends to a continuous map
$\sigma(\ca)\ni\chi\mapsto S_\chi\in\rb_\loc$ and we have
$S_\chi=A_\chi-\lambda$ for all $\chi\in\sigma(\ca)$. Thus the
extended $\Phi$ is given by the same formula
$\Phi(S)=(S_\vkappa)_{\vkappa\in\ca^\dagger}$ for $S\in\ra_1$.

Let $\what{S}$ be the quotient of $S$ in $\rb/\rk$. Then $S$ is
Fredholm if and only if $\what{S}$ is invertible in
$\rb/\rk$. If $S\in\ra_1$ then this happens if and only if
$\Phi(S)$ is invertible in $\prod_{\vkappa\in\ca^\dagger}\rb$, hence 
\begin{equation}\label{eq:fredholmmm}
  S\in\ra_1 \text{ is Fredholm} 
  \Leftrightarrow
\begin{cases} 
S_\vkappa \text{ is invertible  in } \rb
  \ \forall\vkappa\in\ca^\dagger \\
\text{ and } 
{\textstyle\sup_{\vkappa\in\ca^\dagger}}\|S_\vkappa^{-1}\|<\infty.
\end{cases}
\end{equation}
On the other hand, the relation \eqref{eq:GIA} is equivalent to
\[
\C\setminus\spe(A)= {\textstyle\bigcap_{\vkappa\in\ca^\dagger}}
\big(\C\setminus\spec(A_\vkappa)\big)
\]
which means that we have $\lambda\nin\spe(A)$
if and only if $A_\vkappa-\lambda$ is an invertible operator in
$\rb$ for all $\vkappa\in\ca^\dagger$.  Thus, with the notation
$S=A-\lambda$, \eqref{eq:GIA} says
\begin{equation}\label{eq:fredholmm}
  S \text{ is Fredholm }
  \Leftrightarrow
  S_\vkappa \text{ is invertible  in } \rb
  \ \forall\vkappa\in\ca^\dagger. 
\end{equation}
So to prove the theorem we have to show that the second condition
\eqref{eq:fredholmmm} is automatically satisfied. To summarize, we
have to prove
\begin{equation}\label{eq:show}
S\in\ra_1 \text{ and }S_\vkappa\text{ invertible } 
\ \forall\vkappa\in\ca^\dagger \Longrightarrow
{\textstyle\sup_{\vkappa\in\ca^\dagger}}\|S_\vkappa^{-1}\|<\infty.
\end{equation}
Following \cite{LS} we set $\nu(T)\doteq\inf_{\|u\|=1}\|Tu\|$ if $T$
is a bounded operator in a Hilbert space. If $T$ is bijective then
$\nu(T)=\|T^{-1}\|^{-1}$.  Thus the relation
$\sup_{\vkappa\in\ca^\dagger}\|S_\vkappa^{-1}\|<\infty$ is equivalent
to $\inf_{\vkappa\in\ca^\dagger}\nu(S_\vkappa)>0$. Hence
\eqref{eq:show} follows from
\begin{equation}\label{eq:main}
\forall S\in\ra_1 \ \exists\,\omega\in\ca^\dagger \text{ such that }
{\textstyle\inf_{\vkappa\in\ca^\dagger}}\,\nu(S_\vkappa)=\nu(S_\omega). 
\end{equation}
because in our case $\what{S}$ is invertible, so we know that
$S_\vkappa$ is invertible $\forall\vkappa\in\ca^\dagger$.  We will
prove \eqref{eq:main} by adapting the ideas of the papers \cite{LS,SW} to the
Euclidean context.

If $L\in\rb$ then localized versions of $\nu(L)$ are defined as
follows. Let $\Omega\subset X$ open and $0<\theta\leq\infty$. We
assume (here and later) that $\Omega$ is not empty, so $L^2(\Omega)$
is a closed not trivial subspace of $L^2(X)$, and we set
\begin{align*}
\nu(L|\Omega) &=\inf\{\|Lu\|\mid u\in L^2(\Omega),\ \|u\|=1\}, \\
\nu_\theta(L|\Omega) &=\inf\{\|Lu\| \mid u\in L^2(\Omega), 
\ \diam(\supp u)<\theta,\ \|u\|=1\} .
\end{align*}
We have  $\nu(L|\Omega)=\nu_\infty(L|\Omega)$ because the functions
with compact support are dense in $L^2(\Omega)$. Moreover, one may
easily check the relations:
\begin{align}
&\nu_{\theta'}(L|\Omega') \leq \nu_{\theta''}(L|\Omega'')
\quad\text{if } \theta'\geq\theta'' , \Omega'\supset\Omega'',
\label{eq:ineq} \\
&\nu_\theta(\e^{\rmi ap}L\e^{-\rmi ap}|\Omega)=
\nu_\theta(L|a+\Omega) \quad \forall a\in X .
\label{eq:aOmega}
\end{align}
Let us denote $\rho(L)$ the lower bound of the numbers $r$ such that
$\braket{u}{Lv}=0$ if the distance between the supports of $u$ and $v$
is $>r$.  Then $L$ is called \emph{controlled} if $\rho(L)<\infty$.
This means that there is $r>0$ such that $\braket{u}{Lv}=0$ whenever
the distance between the supports of $u$ and $v$ is larger than $r$
\cite[pp. 67, 69]{R}.

The proofs of the next two lemmas closely follow those of Proposition
7.6 and Corollary 7.10 from \cite{SW}.

\begin{lemma}\label{lm:nuc}
  For any $\varepsilon>0,r>0,\ell<\infty$ there is $\theta>0$ such that
  $\forall\, \Omega\subset X$ open
\[
\nu_\theta (L|\Omega)\leq\nu(L|\Omega)+\varepsilon \quad\text{if } 
L\in\rb \text{ with } \|L\|\leq \ell \text{ and } \rho(L)\leq r.
\]
\end{lemma}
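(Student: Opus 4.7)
The plan is to follow the strategy of Proposition 7.6 in \cite{SW}, adapted to the Euclidean setting: given a near-minimizer $u$ for $\nu(L|\Omega)$, I would slice it into pieces supported on a family of mutually distant, bounded-diameter sets, and then turn this geometric separation into orthogonality of the $L$-images via the finite propagation hypothesis $\rho(L)\leq r$. The natural tool to manufacture such a slicing is the \emph{metric sparsification property} (MSP) of Chen-Tessera-Wang-Yu, valid on $\R^d$, as flagged in the introduction.

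Concretely, I would fix $u\in L^2(\Omega)$ with $\|u\|=1$ and $\|Lu\|\leq \nu(L|\Omega)+\varepsilon/3$, and apply MSP to the probability measure $|u(x)|^2\,\mathrm{d}x$ with separation parameter $R=2r+1$ and sparsification level $\delta\in(0,1)$ to be fixed. This produces $Y=\bigsqcup_i Y_i\subset X$ with $\mathrm{diam}(Y_i)\leq S(R,\delta)$, $\mathrm{dist}(Y_i,Y_j)\geq R$ for $i\neq j$, and $\int_{X\setminus Y}|u|^2\leq \delta$. Set $u_i=\chi_{Y_i}u$, which lies in $L^2(\Omega)$ since $u$ does, and $v=\sum_i u_i=\chi_Y u$, so that $\|v\|^2=\sum_i\|u_i\|^2\geq 1-\delta$.

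The crucial observation is that $\mathrm{supp}(Lu_i)$ lies in the $r$-neighborhood of $Y_i$ by $\rho(L)\leq r$, and these $r$-neighborhoods are mutually separated by distance $\geq R-2r=1>0$; hence the vectors $\{Lu_i\}$ are pairwise orthogonal, giving $\|Lv\|^2=\sum_i\|Lu_i\|^2$. On the other hand $\|Lv\|\leq \|Lu\|+\ell\|u-v\|\leq \nu(L|\Omega)+\varepsilon/3+\ell\sqrt{\delta}$. Combining with the elementary averaging inequality $\min_i a_i/b_i\leq \sum_i a_i/\sum_i b_i$ (valid whenever $b_i\geq 0$ and $\sum b_i>0$) I obtain
\[
\min_i\frac{\|Lu_i\|}{\|u_i\|}\leq \frac{\|Lv\|}{\|v\|}\leq \frac{\nu(L|\Omega)+\varepsilon/3+\ell\sqrt{\delta}}{\sqrt{1-\delta}}.
\]

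Using $\nu(L|\Omega)\leq\|L\|\leq\ell$, I would then choose $\delta=\delta(\ell,\varepsilon)>0$ so small that the right-hand side is bounded by $\nu(L|\Omega)+\varepsilon$. The minimizing $u_i$ has support in $Y_i$, which has diameter $\leq S$, and lies in $L^2(\Omega)$, so after normalization it witnesses $\nu_\theta(L|\Omega)\leq \nu(L|\Omega)+\varepsilon$ for any $\theta>S$. Since $R$ depends only on $r$ and $\delta$ only on $(\ell,\varepsilon)$, the resulting $S$, and hence $\theta=S+1$, depends only on $r,\ell,\varepsilon$, as required. The only substantive ingredient is MSP for $\R^d$ (which follows from finite asymptotic dimension and is already invoked as the key tool in the paper's introduction); everything else is orthogonality bookkeeping and elementary inequalities.
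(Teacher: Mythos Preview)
Your proof is correct and follows essentially the same route as the paper's: apply the metric sparsification property of $\R^d$ to the measure $|u|^2\,\d x$ to cut a near-minimizer into well-separated bounded-diameter pieces, use finite propagation to make the $L$-images orthogonal, and extract one piece via the averaging inequality. Your writeup is arguably cleaner in making explicit that the MSP diameter bound $S(R,\delta)$ is uniform in the measure (which is what makes $\theta$ depend only on $r,\ell,\varepsilon$); the paper's ``choose any $\theta>\sup_i\diam(Y_i)$'' leaves this point implicit.
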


\begin{proof}
  $X=\R^d$ has the metric sparsification property, for example because
  it has finite asymptotic dimension \cite{CTWY}; see also
  \cite{NY,R}. To state this in precise terms we introduce a notation
  and a notion. We denote $\cm(X)$ the set of finite positive measure
  on $X$. Then let $R>0$ a real number; a closed subset $Y\subset X$
  will be called \emph{$R$-sparse} if it can be written as a union
  $Y=\ccup_{i\in I}Y_i$ with $\dist(Y_i,Y_j)\geq R$ if $i\neq j$ and
  $\sup_i\diam(Y_i)<\infty$. Clearly each $Y_i$ will also closed.
  Then, by taking into account \cite[Prop. 3.3]{CTWY}, we have:
\begin{equation}\label{eq:spar}
c<1,\, R> 0,\, \mu\in\cm(X) \Rightarrow \exists
Y=R\text{-sparse set } \text{ with } \mu(Y)\geq c\mu(X) .
\end{equation}
In the Definition 3.1 from \cite{CTWY} the set $Y$ is only assumed
Borel but it is easy to check that its closure will have the same
properties.  We choose $c,R$ such that
\begin{equation}\label{eq:cR}
1/2<c<1,\ 6\ell(c^{-1}-1)^{1/2}< \varepsilon, \ R>2r .
\end{equation}
Now let $u\in L^2(\Omega)$ with $\|u\|=1$ and
$\|Lu\|<\nu(L|\Omega)+\varepsilon/4$. Define the measure $\mu$ by
$\mu(A)=\int_A|u(x)|^2\d x$. Finally, let $Y$ as in \eqref{eq:spar}
and choose any $\theta>\sup_i\diam(Y_i)$.  Denote $1_Y$ the
characteristic function of $Y$ and $1'_Y=1-1_Y$ the characteristic
function of $X\setminus Y$. If $u_Y=1_Yu$ then
\[
\|Lu-Lu_Y\|^2\leq \|L\|^2\|1'_Yu\|^2 =\|L\|^2\mu(X\setminus Y)
\leq \|L\|^2 (1-c)\mu(X) = (1-c)\|L\|^2
\]
hence $\|Lu_Y\|\leq\|Lu\|+\ell(1-c)^{1/2}$.  Set $u_i=1_{Y_i}u$, so
$Lu_Y=\sum_i Lu_i$ and the functions $Lu_i$ have disjoint supports,
hence $\|Lu_Y\|^2=\sum_i \|Lu_i\|^2$. We keep only the terms with
$u_i\neq0$ in this sum and write
\begin{align*}
\|Lu_Y\|^2 = {\textstyle\sum_i}\|Lu_i\|^2 /\|u_i\|^2 \cdot \|u_i\|^2 
\geq {\textstyle\inf_i}\, \|Lu_i\|^2 /\|u_i\|^2 
{\textstyle\sum_j \|u_j\|^2}
\end{align*}
and since we also have $\sum_j\|u_j\|^2=\|u_Y\|^2\geq c\|u\|^2=c$ we
get
\begin{align*}
{\textstyle\inf_i}\, \|Lu_i\| /\|u_i\|
&\leq c^{-1/2}\|Lu\| + \ell(c^{-1}-1)^{1/2}  \\
&\leq \|Lu\| +\ell\big(c^{-1/2}-1 +(c^{-1}-1)^{1/2} \big) .
\end{align*}
If $a=c^{-1}$ then from \eqref{eq:cR} we get $1 <a<2$ and
\begin{align*}
a-1 &+(a^2-1)^{1/2} =(a-1)^{1/2}\big((a-1)^{1/2}+(a+1)^{1/2}\big) \\
& <(a-1)^{1/2}(1+\sqrt3) <3(a-1)^{1/2} < \varepsilon/2\ell .
\end{align*}
Thus $\inf_i \|Lu_i\| /\|u_i\| \leq \|Lu\| +\varepsilon/2$.  Choose
$i$ such that $\|Lu_i\| /\|u_i\| \leq \|Lu\| +3\varepsilon/4$ and
denote $v=u_i/\|u_i\|\in L^2(\Omega)$. Then $\supp v\subset Y_i$ which
has diameter $<\theta$ and we have
$\|Lv\|\leq \nu(L|\Omega)+\varepsilon$ by the choice of $u$. Hence
$\nu_\theta (L|\Omega)\leq\nu(L|\Omega)+\varepsilon$.
\end{proof}

\begin{lemma}\label{lm:nuca}
  If $S\in\ra_1$ and $\varepsilon>0$ then there is $\theta>0$ such
  that
\[
\nu_\theta (S_\vkappa|\Omega)\leq\nu(S_\vkappa|\Omega)+\varepsilon \quad
\forall\vkappa\in\ca^\dagger \text{ and } \forall\, \Omega\subset X 
\text{ open} .
\]
\end{lemma}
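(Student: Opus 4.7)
The plan is to reduce Lemma \ref{lm:nuca} to Lemma \ref{lm:nuc} by approximating $S$ in operator norm by an element of $\ra_1$ whose character-translates are controlled with a uniform constant and uniformly bounded in norm. Write $S=A+\lambda$ with $A\in\ra,\lambda\in\C$; given $\varepsilon>0$, Lemma \ref{lm:niet} supplies $\delta>0$ with $\|\Phi_\delta(A)-A\|<\varepsilon/3$. Set $T\doteq\Phi_\delta(A)+\lambda$. One checks on generators $\varphi(q)\psi(p)$ that $\Phi_\delta$ preserves $\ra$ (the image is $\varphi(q)$ times the convolution of $\psi\in C_0(X)$ with an $L^1$ function, still in $C_0(X)$), so $T\in\ra_1$ with well-defined translates $T_\vkappa$ for each $\vkappa\in\ca^\dagger$. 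Lemma \ref{lm:niet} moreover furnishes $\ell\doteq\|T\|<\infty$ and a controlled constant $r\doteq\rho(T)<\infty$.

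The key intermediate step is to show $\rho(T_\vkappa)\leq r$ and $\|T_\vkappa\|\leq\ell$ for every $\vkappa\in\ca^\dagger$. The norm bound is immediate from contractivity of the morphism $\tau_\vkappa$. For the controlled bound, note that conjugation by $\e^{\rmi xp}$ merely shifts supports, so $\rho(T_x)\leq r$ for every $x\in X$. Given $u,v\in L^2(X)$ of compact support with $\dist(\supp u,\supp v)>r$, write $v=\theta(q)v$ for some $\theta\in C_\rmc(X)$; by definition of the local norm topology $T_x\theta(q)\to T_\vkappa\theta(q)$ in norm, hence $T_xv\to T_\vkappa v$ in norm. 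Since $\langle u,T_xv\rangle=0$ for all $x$, passing to the limit yields $\langle u,T_\vkappa v\rangle=0$, establishing $\rho(T_\vkappa)\leq r$.

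Applying Lemma \ref{lm:nuc} with parameters $(\varepsilon/3,r,\ell)$ produces a single $\theta>0$, depending only on $\varepsilon,r,\ell$ and hence independent of both $\vkappa$ and $\Omega$, such that $\nu_\theta(T_\vkappa|\Omega)\leq\nu(T_\vkappa|\Omega)+\varepsilon/3$ for every $\vkappa\in\ca^\dagger$ and every open $\Omega\subset X$. Because both $\nu(\cdot|\Omega)$ and $\nu_\theta(\cdot|\Omega)$ are $1$-Lipschitz in operator norm and $\|S_\vkappa-T_\vkappa\|\leq\|S-T\|<\varepsilon/3$, chaining the three estimates gives $\nu_\theta(S_\vkappa|\Omega)\leq\nu(S_\vkappa|\Omega)+\varepsilon$ uniformly, as required.

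The main obstacle I anticipate is the passage from the uniform controlled bound on the genuine translates $\{T_x\}_{x\in X}$ to the character-translates $\{T_\vkappa\}$, since the defining convergence $T_x\to T_\vkappa$ only holds in the local norm topology of $\rb$. This is precisely why one must test controlled-ness on vectors of compact support: on such vectors local norm convergence becomes genuine norm convergence, permitting the limit to be taken inside the sesquilinear form $\langle u,\,\cdot\,v\rangle$.
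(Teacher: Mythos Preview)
Your proof is correct and follows essentially the same route as the paper: approximate $S$ by a controlled $T\in\ra_1$, show the character-translates $T_\vkappa$ inherit a uniform propagation bound and norm bound, invoke Lemma~\ref{lm:nuc}, and transfer back to $S_\vkappa$ via the $1$-Lipschitz stability of $\nu$ and $\nu_\theta$. The only cosmetic difference is that the paper produces the controlled approximation directly from the density in $\ra$ of finite sums $\varphi(q)\psi(p)$ with $\cf\psi\in C_\rmc(X)$, whereas you go through the regularization $\Phi_\delta$ of Lemma~\ref{lm:niet} and then check $\Phi_\delta(\ra)\subset\ra$ on generators; the passage to the limit $\langle u,T_x v\rangle\to\langle u,T_\vkappa v\rangle$ for compactly supported $u,v$ is handled identically in both.
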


\begin{proof}
  We have $S=A-\lambda$ with $A\in\ra$ and $\lambda\in\rc$. The subset
  of $\ra$ consisting of controlled operators is dense in $\ra$:
  indeed, it contains the linear subspace generated by the operators
  $\varphi(q)\psi(p)$ with $\varphi\in\ca$ and $\cf\psi\in C_\rmc(X)$.
  So there is a controlled operator $T\in\ra+\C$ such that
  $\|S-T\|<\veps$. This clearly implies
  $\|S_\vkappa-T_\vkappa\|<\veps$ for all
  $\vkappa\in\ca^\dagger$. Moreover, $\rho(T_\vkappa)\leq\rho(T)$
  because if $r>\rho(T)$ and $\dist(\supp{u},\supp{v})>r$ then for
  each $x\in X$ we also have
  $\dist(\supp(\e^{\rmi xp}u),\supp(\e^{\rmi xp}v))>r$ hence
  $\braket{u}{T_xv}=0$ so by passing to the limit in the direction
  $\vkappa$ we get $\braket{u}{T_\vkappa v}=0$, so
  $\rho(T_\vkappa)\leq r$. Since we also have
  $\|T_\vkappa\|\leq\|T\|\leq\|S\|+\varepsilon $, we may use Lemma
  \ref{lm:nuc} and find $\theta>0$ such that
\[
\nu_\theta (T_\vkappa|\Omega)\leq\nu(T_\vkappa|\Omega)+\varepsilon \quad
\forall\vkappa\in\ca^\dagger \text{ and } \forall\,\Omega\subset X.
\]
Finally, from $\|S_\vkappa-T_\vkappa\|<\veps$ we obviously get 
$|\nu_\theta (S_\vkappa|\Omega)-\nu_\theta(T_\vkappa|\Omega)|\leq\veps$ and
similarly for the $\nu(\cdot\,|\Omega)$ quantities. 
Thus $\nu_\theta(S_\vkappa|\Omega)\leq\nu(S_\vkappa|\Omega)+3\varepsilon$
$\forall\vkappa\in\ca^\dagger$ { and } $\forall\,\Omega\subset X$.
\end{proof}

In the proof of the next lemma we use an argument from the proof of
Theorem 8 in \cite{LS}. Let $B(r)=\{x\in X\mid |x|<r\}$ be the open
ball of center $0$ and radius $r$ in $X$. If $S\in\rb$ then the
operators $S_a=\e^{\rmi ap}S\e^{-\rmi ap}$ are called
\emph{translations of $S$}.

\begin{lemma}\label{lm:imppp}
  Let $S\in\rb$ and $n>0$ integer. For $1\leq i\leq n$ let
  $\varepsilon_i,\theta_i>0$ such that
\begin{equation}\label{eq:llsii}
  \nu_{\theta_i}(S|\Omega) < \nu(S|\Omega)+\varepsilon_i 
  \quad \forall\, \Omega\subset X \text{ open ball}.
\end{equation}
Then there is a translate $T$ of $S$ such that for all $1\leq m\leq n$
\begin{equation}\label{eq:imppp}
\nu(T|B({\textstyle\sum_{1\leq i\leq m}}\theta_i+\theta_{m})) 
<\nu(S) +\varepsilon_m+\dots+\varepsilon_{n} .
\end{equation}
\end{lemma}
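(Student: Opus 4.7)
The plan is to apply the hypothesis \eqref{eq:llsii} iteratively $n$ times, descending from $i = n$ to $i = 1$, and chain the resulting supports into nested balls around a common center $a_1$. Write $\sigma_m := \sum_{1 \leq i \leq m} \theta_i$ for brevity; the translate $T := S_{a_1}$ will turn out to be the desired one.

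First I would fix a small $\eta > 0$ and choose a compactly supported unit vector $u_0 \in L^2(X)$ satisfying $\|Su_0\| < \nu(S) + \eta$, enclosed in some open ball $\Omega_0$. At the initial step ($k = n$), applying \eqref{eq:llsii} with index $n$ on the ball $\Omega_0$ produces a unit vector $u_n \in L^2(\Omega_0)$ with $\diam \supp u_n < \theta_n$ and $\|Su_n\| < \nu(S|\Omega_0) + \varepsilon_n \leq \|Su_0\| + \varepsilon_n$; I pick $a_n \in \supp u_n$, so $\supp u_n \subset B(a_n, \theta_n)$. Then for each $k = n-1, n-2, \ldots, 1$, I set $\Omega_{k+1} := B(a_{k+1}, \theta_{k+1})$ (which contains $\supp u_{k+1}$, giving $\nu(S|\Omega_{k+1}) \leq \|Su_{k+1}\|$), apply \eqref{eq:llsii} with index $k$ on this ball to obtain $u_k \in L^2(\Omega_{k+1})$ of unit norm with $\diam \supp u_k < \theta_k$ and $\|Su_k\| < \|Su_{k+1}\| + \varepsilon_k$, and pick $a_k \in \supp u_k$: this gives $\supp u_k \subset B(a_k, \theta_k)$ and $|a_k - a_{k+1}| < \theta_{k+1}$.

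Iterating the norm bound yields $\|Su_m\| < \|Su_0\| + \sum_{k=m}^n \varepsilon_k < \nu(S) + \eta + \sum_{k=m}^n \varepsilon_k$, and the telescoping triangle inequality gives $|a_1 - a_m| < \theta_2 + \cdots + \theta_m$, whence $\supp u_m \subset B(a_m, \theta_m) \subset B(a_1, \theta_2 + \cdots + \theta_m + \theta_m) \subset B(a_1, \sigma_m + \theta_m)$ (using $\theta_1 \geq 0$). Applying \eqref{eq:aOmega} to $T = S_{a_1}$ gives $\nu(T | B(\sigma_m + \theta_m)) = \nu(S | B(a_1, \sigma_m + \theta_m)) \leq \|Su_m\| < \nu(S) + \eta + \sum_{k=m}^n \varepsilon_k$, and taking $\eta$ sufficiently small, leveraging the strict inequalities in the chain to absorb the slack, yields \eqref{eq:imppp}. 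The main obstacle will be the geometric coordination that puts every $\supp u_m$ into a single ball $B(a_1, \sigma_m + \theta_m)$: this is resolved precisely by the choice $\Omega_{k+1} := B(a_{k+1}, \theta_{k+1})$, which is both the smallest natural ball containing $\supp u_{k+1}$ and small enough that the chain of consecutive jumps of length $< \theta_{k+1}$ telescopes within the target radius.
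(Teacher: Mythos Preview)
Your iterative construction is essentially the same as the paper's (the paper runs the indices forward with a permutation $\varepsilon'_k=\varepsilon_{n-k+1}$ and translates the operator at each step rather than tracking centers $a_k$, but these are dual descriptions of the same process), and your geometric containment $\supp u_m\subset B(a_1,\sigma_m+\theta_m)$ is correct.

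There is, however, a real gap at the final step. Your translate $T=S_{a_1}$ depends on the entire construction, which in turn depends on $\eta$: the point $a_1$ moves when you change $\eta$. So the sentence ``taking $\eta$ sufficiently small, leveraging the strict inequalities in the chain to absorb the slack'' does not produce a \emph{single} $T$ satisfying the strict inequality \eqref{eq:imppp}; for each $\eta$ you only get some $T_\eta$ with $\nu(T_\eta\,|\,B(\sigma_m+\theta_m))<\nu(S)+\eta+\sum_{k=m}^n\varepsilon_k$, and there is no compactness available here to pass to a limit while retaining the strict inequality. The ``slack'' in the strict inequalities $\|Su_k\|<\|Su_{k+1}\|+\varepsilon_k$ is not known before the construction is run and itself depends on $\eta$, so the argument is circular.

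The fix is simple and is exactly what the paper does: drop $u_0$ and $\eta$ entirely and begin the iteration at step $n$ by applying \eqref{eq:llsii} directly with $\Omega=X$ (the paper treats $X$ as an admissible ball; in the application via Lemma~\ref{lm:nuca} the estimate is available for all open $\Omega$ anyway). This gives immediately a unit vector $u_n$ with $\diam(\supp u_n)<\theta_n$ and $\|Su_n\|<\nu(S)+\varepsilon_n$, after which your descent $k=n-1,\dots,1$ runs unchanged and yields \eqref{eq:imppp} with no auxiliary parameter.
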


\begin{proof}
  To simplify the writing, in this proof we set $V_a=\e^{\rmi ap}$. We
  begin with two remarks which will be used in the next
  argument. First, due to \eqref{eq:aOmega}, the estimate
  \eqref{eq:llsii} is also satisfied by any translate of $S$. Then, if
  $v$ is a function with $\diam(\supp{v})<\theta$ and if
  $a\in\supp{v}$ then $\supp{V_av}\subset B(\theta)$ because
  $\supp{V_av}=\supp{v}-a$.

  Denote $(\varepsilon_i')$ a permutation of the numbers
  $(\varepsilon_i)$ (we shall specify it later on) and let
  $(\theta_i')$ be the corresponding numbers defined as in
  \eqref{eq:llsii}. Set $S_0=S$ and $\theta'_0=\infty$.  Let us prove
  that there are translations $S_1,\dots,S_n$ of $S$ such that
\begin{equation}\label{eq:svx}
S_{i}=V_{x_i}S_{i-1}V_{x_i}^* \text{ with } |x_i|<\theta'_{i-1}
\quad\text{and}\quad \nu(S_i|B(\theta'_i))
<\nu(S)+\varepsilon'_{1}+\dots+\varepsilon'_{i} . 
\end{equation}
From \eqref{eq:llsii} we get
$\nu_{\theta'_1}(S) < \nu(S)+\varepsilon'_1$ hence there is
$v\in L^2(X)$ with $\|v\|=1$ and $\diam(\supp{v})<\theta'_1$ such that
$\|Sv\|<\nu(S)+\varepsilon'_1$.  If $x_1\in\supp{v}$ and $u_1=V_{x_1}v$
then $\supp u_1\subset B(\theta'_1)$ so if we set
$S_1=V_{x_1}SV_{x_1}^*$ then
\[
\|S_1u_1\|<\nu(S)+\varepsilon'_1
\quad\text{hence}\quad \nu(S_1|B(\theta'_1)) 
< \nu(S)+\varepsilon'_1.
\]
Thus $S_1$ has been constructed. Assume that $S_1,\dots,S_i$ have been
constructed with $i<n$ and let us construct $S_{i+1}$. From
\eqref{eq:llsii} and the induction assumption \eqref{eq:svx} we get
\[
  \nu_{\theta'_{i+1}}(S_i|B(\theta'_i))
< \nu(S_i|B(\theta'_i))+\varepsilon'_{i+1}
< \nu(S)+\varepsilon'_{1}+\dots+\varepsilon'_{i}+\varepsilon'_{i+1} .
\]
So there is unit vector $v\in L^2(X)$ with
$\supp v\subset B(\theta'_i)$ and $\diam(\supp v)<\theta'_{i+1}$ such
that $\|S_{i}v\|< \nu(S)+\varepsilon'_1+\dots+\varepsilon'_{i+1}$. Let
$x_{i+1}\in\supp{v}$ and let us denote $u_{i+1}=V_{x_{i+1}}v$ and
$S_{i+1}=V_{i+1}S_iV_{i+1}^*$. Then $|x_{i+1}|<\theta'_i$,
$\supp{u_{i+1}}\subset B(\theta'_{i+1})$, and
\[
\|S_{i+1} u_{i+1}\|<\nu(S)+\varepsilon'_1+\dots+\varepsilon'_{i+1} 
\quad\text{hence}\quad
\nu(S_{i+1}|B(\theta'_{i+1})) <
\nu(S)+\varepsilon'_1+\dots+\varepsilon'_{i+1}.  
\]
This proves the existence of operators $S_1,\dots,S_n$ verifying
\eqref{eq:svx}. 

Next, starting with $S_n=V_{x_n}S_{n-1}V_{x_n}^*$ then replacing
$S_{n-1}$ by $V_{x_{n-1}}S_{n-2}V_{x_{n-1}}^*$ and so on, we get
$S_n=V_{y_i}S_iV_{y_i}^*$ with $y_i=x_n+\dots+x_{i+1}$.  Then in
\eqref{eq:svx} we use $S_i=V_{y_i}^*S_nV_{y_i}$ and by taking into
account the relation \eqref{eq:aOmega} we get 
\[
\nu(S_n|B(\theta'_i)-y_i)=\nu(V_{y_i}^*S_nV_{y_i}|B(\theta'_i))
=\nu(S_i|B(\theta'_i))<\nu(S)+\varepsilon'_{1}+\dots+\varepsilon'_{i}.
\]
Since $|y_i|\leq|x_n|+\dots+|x_{i+1}|<\theta'_i+\dots+\theta'_{n-1}$ we
have
\[
B(\theta'_i)-y_i\subset B(\theta'_i+|y_i|)
\subset B(2\theta'_i+\dots+\theta'_{n-1})
\subset B(2\theta'_i+\dots+\theta'_{n})
\]
hence 
\[
\nu(S_n|B(2\theta'_i+\dots+\theta'_{n}))
<\nu(S)+\varepsilon'_{1}+\dots+\varepsilon'_{i} .
\]
If we take $\varepsilon_{k}'=\varepsilon_{n-k+1}$ and
$\theta_{k}'=\theta_{n-k+1}$ we get
\[
\nu(S_n|B(2\theta_{n-i+1}+\dots+\theta_{1}))
<\nu(S)+\varepsilon_{n}+\dots+\varepsilon_{n-i+1}
\]
hence \eqref{eq:imppp} is satisfied with $T=S_n$. 
\end{proof}

\begin{lemma}\label{lm:topology}
  For each $S\in\ra_1$ the set
  $\cs=\{S_\vkappa\mid \vkappa\in\ca^\dagger\}$ is a compact stable
  under translations subset of $\rb_\loc$.
\end{lemma}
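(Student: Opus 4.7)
The plan is to realize $\cs$ as the continuous image of the compact set $\ca^\dagger$ under the map $\chi \mapsto S_\chi$ furnished by Theorem \ref{th:GIold}, and then to derive stability under translations from the translation invariance of $\ca^\dagger$ recalled in \S\ref{ss:tchar}. There is no substantial obstacle; the only point worth a brief verification is that each automorphism $\tau_a$ acts continuously on $\rb_\loc$.

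For compactness, I would write $S = A - \lambda$ with $A \in \ra$ and $\lambda \in \C$, so that $S_\chi = A_\chi - \lambda$ for every $\chi \in \sigma(\ca)$. By Theorem \ref{th:GIold} the map $\chi \mapsto A_\chi$ is continuous from $\sigma(\ca)$ into $\re_\loc$; subtracting the constant $\lambda$ preserves continuity, so $\chi \mapsto S_\chi$ is a continuous map from $\sigma(\ca)$ into $\rb_\loc$. Since $\ca^\dagger$ is closed in the compact Hausdorff space $\sigma(\ca)$ it is itself compact, and hence its image $\cs$ in the Hausdorff space $\rb_\loc$ is compact.

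For translation stability the key identity is
\begin{equation*}
\tau_a(S_\chi) = S_{a + \chi} \qquad \forall\, a \in X,\ \forall\, \chi \in \sigma(\ca),
\end{equation*}
where $a + \chi$ is the translated character introduced in \S\ref{ss:tchar}. For $\chi = x \in X$ this is immediate from the definitions: $\tau_a(S_x) = \e^{\rmi ap}\e^{\rmi xp}S\e^{-\rmi xp}\e^{-\rmi ap} = S_{a+x}$. Both sides depend continuously on $\chi \in \sigma(\ca)$ in the local norm topology---the right-hand side by Theorem \ref{th:GIold}, the left-hand side because $\tau_a$ is continuous on $\rb_\loc$ (one checks $\|\tau_a(T)\|_\theta = \|T\|_{\tau_{-a}\theta}$ using $\e^{-\rmi ap}\theta(q) = \theta(q - a)\e^{-\rmi ap}$, and $\tau_{-a}\theta \in C_0(X)$ whenever $\theta \in C_0(X)$). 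By density of $X$ in $\sigma(\ca)$ the identity then extends to every $\chi \in \sigma(\ca)$. Since $\ca^\dagger$ is invariant under the $X$-action on $\sigma(\ca)$, for any $\vkappa \in \ca^\dagger$ and $a \in X$ we have $a + \vkappa \in \ca^\dagger$, and thus $\tau_a(S_\vkappa) = S_{a+\vkappa} \in \cs$.
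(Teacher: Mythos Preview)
Your proof is correct and follows essentially the same approach as the paper: compactness via the continuous image of the compact set $\ca^\dagger$ under $\chi\mapsto S_\chi$, and translation stability via the identity $\tau_a(S_\vkappa)=S_{a+\vkappa}$ together with the $X$-invariance of $\ca^\dagger$. The only cosmetic difference is that the paper obtains $\tau_a(S_\vkappa)=S_{a+\vkappa}$ directly from the morphism identity $\tau_a\tau_\vkappa=\tau_{a+\vkappa}$, whereas you establish it by checking it on $X$ and extending by density and continuity of $\tau_a$ on $\rb_\loc$.
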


\begin{proof}
  As explained at the beginning of Section \ref{s:proof}, the map
  $\chi\mapsto S_\chi$ is a continuous function
  $\sigma(\ca)\to\rb_\loc$. Since $\ca^\dagger$ is a compact subset
  of $\sigma(\ca)$ it follows that the set $\cs$ is compact in
  $\rb_\loc$. To prove the invariance under translations, it
  suffices to note that if $a\in X$ and $\vkappa\in\ca^\dagger$ then
  $\tau_a(S_\vkappa)=\tau_a\tau_\vkappa(S) =\tau_{a+\vkappa}(S)$ and
  $a+\vkappa\in\ca^\dagger$ (page \pageref{p:trans}).
\end{proof}

\begin{lemma}\label{lm:enfin}
For each $S\in\ra_1$ there is $\omega\in\ca^\dagger$ such that 
$\inf_{\vkappa\in\ca^\dagger}\nu(S_\vkappa)=\nu(S_\omega)$. 
\end{lemma}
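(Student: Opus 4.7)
The plan is to combine the local lower-bound estimates of Lemma \ref{lm:imppp} with weak $L^2$-compactness on a bounded ball, and to compensate for the possible loss of mass via a missing-mass identity coming from the decomposition $S=A+\lambda$ with $A\in\ra$, $\lambda\in\C$.

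Set $\nu_0=\inf_{\vkappa\in\ca^\dagger}\nu(S_\vkappa)$ and pick $\vkappa_n\in\ca^\dagger$ with $\nu(S_{\vkappa_n})\to\nu_0$. Fix $\varepsilon_i=2^{-i}$ and use Lemma \ref{lm:nuca} to obtain $\theta_i>0$ with $\nu_{\theta_i}(S_\vkappa|\Omega)\le\nu(S_\vkappa|\Omega)+\varepsilon_i$ uniformly for $\vkappa\in\ca^\dagger$ and open $\Omega\subset X$. Applying Lemma \ref{lm:imppp} to $S_{\vkappa_n}$ with the pairs $(\varepsilon_i,\theta_i)_{i=1}^n$ yields a translate $T_n=\tau_{a_n}(S_{\vkappa_n})=S_{\omega_n}$ with $\omega_n=a_n+\vkappa_n\in\ca^\dagger$ (page \pageref{p:trans}), satisfying
\[
\nu(T_n|B(R_m))\le\nu(S_{\vkappa_n})+\textstyle\sum_{i=m}^{n}\varepsilon_i,\qquad 1\le m\le n,
\]
where $R_m=\theta_1+\cdots+\theta_{m-1}+2\theta_m$. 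Since $\cs$ is compact in $\rb_\loc$ by Lemma \ref{lm:topology} and $\rb_\loc$ is metrizable on bounded subsets, I pass to a subsequence so that $T_n\to T=S_\omega$ in $\rb_\loc$ for some $\omega\in\ca^\dagger$; it then suffices to show $\nu(T)\le\nu_0$.

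Fix $m$ and pick unit vectors $v_n\in L^2(B(R_m))$, $n\ge m$, with $\|T_nv_n\|\le\nu(T_n|B(R_m))+1/n$; by weak compactness extract a further subsequence with $v_n\rightharpoonup v^{(m)}$ in $L^2$, and set $s=\|v^{(m)}\|^2\in[0,1]$. With $T_n=A_{\omega_n}+\lambda$ and $T=A_\omega+\lambda$, I use two inputs: (i) since $T_n\to T$ in $\rb_\loc$ and $\supp v_n\subset B(R_m)$, one has $\|(A_{\omega_n}-A_\omega)v_n\|\to 0$; (ii) for every $B\in\re$ and every sequence $u_n\rightharpoonup 0$ with supports in a fixed ball, $Bu_n\to 0$ in norm. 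The key analytic fact (ii) follows by approximating $B$ in norm by finite sums $\sum_j\varphi_j(q)\psi_j(p)$ with $\psi_j\in C_0(X)$ and observing that for any compact $K$ the operator $1_K\cf\,1_{B(R_m)}$ is Hilbert--Schmidt on $L^2$, hence maps weakly convergent sequences to strongly convergent ones; this forces $\psi_j(p)u_n\to 0$ in norm, hence $\varphi_j(q)\psi_j(p)u_n\to 0$. Together (i) and (ii) imply $A_{\omega_n}v_n\to A_\omega v^{(m)}$ in norm, and a direct expansion then yields the missing-mass identity
\[
\lim_{n}\|T_nv_n\|^2=\|Tv^{(m)}\|^2+|\lambda|^2(1-s).
\]

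Set $a_m=\nu_0+\sum_{i\ge m}\varepsilon_i$, so $a_m\to\nu_0$. Combining the identity with $\limsup_n\|T_nv_n\|\le a_m$ gives $\|Tv^{(m)}\|^2\le a_m^2-|\lambda|^2(1-s)$. If $a_m\ge|\lambda|$, the same input (ii) applied to $T=A_\omega+\lambda$ with a unit sequence $u_n\rightharpoonup 0$ of uniformly bounded support gives $\|Tu_n\|\to|\lambda|$, whence $\nu(T)\le|\lambda|\le a_m$. If $a_m<|\lambda|$, then $s=0$ would force $|\lambda|^2\le a_m^2$, a contradiction, so $s>0$, and testing $T$ against $v^{(m)}/\sqrt{s}$ gives $\nu(T)^2\le(a_m^2-|\lambda|^2)/s+|\lambda|^2$, a function monotone in $s\in(0,1]$ whose maximum $a_m^2$ is attained at $s=1$. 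In every case $\nu(T)\le a_m$, and letting $m\to\infty$ gives $\nu(T)\le\nu_0$. The main obstacle is that the unit ball of $L^2(B(R_m))$ is not norm-compact, so the weak limit $v^{(m)}$ may be strictly smaller in norm than $1$, or even vanish, through oscillations in momentum; the missing-mass identity is precisely what quantifies the lost contribution as $|\lambda|^2(1-s)$ and lets it be absorbed by the upper bound $\nu(T)\le|\lambda|$ coming from oscillating test sequences.
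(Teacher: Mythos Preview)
Your proof is correct, but you have taken a substantially more elaborate route than the paper. Both arguments share the same setup through Lemma~\ref{lm:imppp}: pick near-minimizers $\vkappa_n$, use Lemma~\ref{lm:nuca} to fix $(\varepsilon_i,\theta_i)$, apply Lemma~\ref{lm:imppp} to obtain translates $T_n\in\cs$ with $\nu(T_n|B(R_m))\le\nu(S_{\vkappa_n})+\sum_{i\ge m}\varepsilon_i$, and extract a $\rb_\loc$-convergent subsequence $T_n\to S_\omega$ via Lemma~\ref{lm:topology}. The divergence is at the last step. The paper simply observes that local norm convergence \emph{means} $T_n1_{B(R_m)}\to S_\omega1_{B(R_m)}$ in operator norm, and since $|\nu(L_1|B)-\nu(L_2|B)|\le\|(L_1-L_2)1_B\|$ one gets $\nu(T_n|B(R_m))\to\nu(S_\omega|B(R_m))$ directly; then $\nu(S_\omega)\le\nu(S_\omega|B(R_m))\le\nu_0+\eta_m$ and $m\to\infty$ finishes. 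Your weak-compactness extraction of $v^{(m)}$, the missing-mass identity, and the case split on $|\lambda|$ versus $a_m$ are all correct, but they are reproving by hand a continuity that is already built into the topology: the whole point of insisting on \emph{local norm} (rather than strong) convergence in Theorem~\ref{th:GIold} is precisely that $\nu(\cdot\,|B)$ becomes norm-continuous in the restriction to $B$, so no mass can escape to high momenta. Your machinery would be genuinely needed if one only had strong convergence $T_n\to S_\omega$, where oscillatory leakage is a real obstruction; here that obstruction is already removed, and the compactness of $A_\omega 1_K$ that you invoke is exactly a manifestation of the same fact.
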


\begin{proof}
  Let $\{\varepsilon_i\}_{i\geq0}$ be a sequence of strictly positive
  numbers such that $\sum_i\varepsilon_i<\infty$ and let us set
  $\eta_m=\sum_{i\geq m}\varepsilon_i$. By Lemma \ref{lm:nuca}, there
  is a sequence of numbers $\theta_i$ such that
\[
\nu_{\theta_i}(S_\vkappa|\Omega)\leq\nu(S_\vkappa|\Omega)+\varepsilon_i  
\quad \forall\, \vkappa\in\ca^\dagger, \ \forall\, i, 
\text{ and } \forall\, \Omega\subset X \text{ open} .
\]
Then choose a sequence of points $\omega_n\in\ca^\dagger$ such that
$\lim_n\nu(S_{\omega_n})=\inf_{\vkappa\in\ca^\dagger}\nu(S_\vkappa)$. To
simplify notations we set $S_n=S_{\omega_n}$, so
$\lim_n\nu(S_n)=\inf_{\vkappa\in\ca^\dagger}\nu(S_\vkappa)$.  Now we
apply Lemma \ref{lm:imppp} to the operator $S_n$ which satisfies the
preceding inequality for $0\leq i\leq n$. If we set
$\zeta_m=\sum_{1\leq i\leq m}\theta_i+\theta_m$ we see that there is
a translate $T_n$ of $S_n$ such that
\[
\nu(T_n|B(\zeta_{m})) <\nu(S_n)+\eta_{m} \quad\forall\ 0\leq m\leq n.
\] 
Lemma \ref{lm:topology} implies that the sequence $\{T_{n}\}$ has a
subsequence $\{T_{n_k}\}$ convergent in $\rb_\loc$ to some
$S_\omega$ with $\omega\in\ca^\dagger$.  We have
\[
\nu(T_{n_k}|B(\zeta_{m})) <\nu(S_{n_k})+\eta_{m} 
\quad\forall\ 0\leq m\leq {n_k}
\] 
and $\lim_k T_{n_k}1_{B(\zeta_{m})}=S_\omega1_{B(\zeta_{m})}$ \emph{in
  norm} by the definition of the local norm topology. This implies
$\lim_k\nu(T_{n_k}|B(\zeta_{m}))=\nu(S_\omega|B(\zeta_{m}))$.  But
$\lim_k\nu(S_{n_k})=\inf_{\vkappa\in\ca^\dagger}\nu(S_\vkappa)$ by our
initial choice, hence we obtain
\[
\nu(S_\omega)\leq \nu(S_\omega|B(\zeta_{m}))\leq
\inf_{\vkappa\in\ca^\dagger}\nu(S_\vkappa) +\eta_m
\]
for any $m$. Making $m\to\infty$ we get
$\nu(S_\omega)\leq \inf_{\vkappa\in\ca^\dagger}\nu(S_\vkappa)$ which
finishes the proof.
\end{proof}

\section{Applications.}\label{s:appl}
\protect\setcounter{equation}{0}

This section is devoted to applications of Theorem \ref{th:GIA} in the
context of differential operators. In \S\ref{ss:unbounded} we develop
some tools which allow one to extend Theorem \ref{th:GIA} to unbounded
not necessarily self-adjoint operators. Then in \S\ref{ss:diff} we
consider singular elliptic differential operators affiliated to
$\re$. In \cite{GI4} one may find many other examples of algebras
$\ra$ and in each of these examples our Theorem \ref{th:GIA} can be
applied and gives a significant improvement of the results. For
example, if $X=\R^d$, the condition that $H$ be a normal operator in
Proposition 6.3 or Theorems 6.13 and 6.27 from \cite{GI4} is
eliminated.  Note that in these three examples we have been able to
show there, by ad hoc arguments, that the union is already a closed
set, but the non normal case was clearly not accessible. The
$C^*$-algebra involved in Theorem 6.27 in \cite{GI4} is the ``usual''
$N$-body algebra, i.e.\ the smallest $C^*$-algebra to which the
$N$-body Hamiltonians with $2$-body interactions tending to zero at
infinity are affiliated. But for the larger $C^*$-algebra generated by
$N$-body Hamiltonians with asymptotically homogeneous $2$-body
interactions introduced in \cite{GN} both questions (closedness of the
union and not normal case) remained open; they are solved in
\S\ref{ss:hom}.

\subsection{Unbounded operators}\label{ss:unbounded}

We extend here to non self-adjoint operators the notion of affiliation
to $C^*$-algebras developed in \cite{ABG,DG} and references therein.

For physical and technical reasons we are forced to consider non
densely defined self-adjoint operators.  In fact, even in the simplest
physically interesting case of operators of the form $H=p^2+v$ on
$L^2(\R)$ with $v$ a positive unbounded function some localizations at
infinity are not densely defined operators, cf.\
\S\ref{sss:unbounded}.

There are three natural ways of viewing self-adjoint operators
affiliated to a $C^*$-algebra $\rc$: as morphisms $C_0(\R)\to\rc$, as
resolvents, or as usual self-adjoint operators living in closed
subspaces of $\ch$ \cite[\S8.1.2]{ABG}.  If we think of $H$ as the
Hamiltonian of a quantum system, the morphism point of view is the
most natural one, but it has not an obvious extension to non
self-adjoint operators. So we shall treat the non self-adjoint case by
using the resolvent approach. Under a supplementary condition which
suffices in our applications we will also define an associated
operator which lives in a closed subspace of $\ch$.

\subsubsection{Closed operators}\label{sss:RH}

To justify later definitions, we recall some  facts concerning
a closed operator $H$ acting in $\ch$. Let $D(H)$ be its domain
equipped with the graph topology. Its resolvent set is
$\Omega(H)=\{z\in\C\mid H-z:D(H)\to\ch\text{ is bijective}\}$ and its
resolvent is the map $R:z\mapsto(H-z)^{-1}$ with domain $\Omega(H)$.
Let $R_{za}\doteq(H-z)R(a)=1-(z-a)R(a)$ for $a\in\Omega(H)$ and 
$z\in\C$; this is a bijective map $\ch\to\ch$ if $a,z\in\Omega(H)$.

The spectrum of $H$ is the set $\spec(H)=\C\setminus\Omega(H)$ and the
essential spectrum of $H$ is defined as the essential spectrum of the
continuous operator $H:D(H)\to\ch$, i.e.\ the set of numbers $\lambda$
such that $H-\lambda:D(H)\to\ch$ is not Fredholm. So
$\lambda\in\spe(H)$ means that either $\lambda$ is an eigenvalue of
infinite multiplicity of $H$ or the range of $H-\lambda$ is not of
finite codimension in $\ch$.  If $a\in\Omega(H)$ then we have the
following \emph{spectral mapping theorem}:
\begin{align}
\spec(H) &=\{\lambda\in\C \mid (\lambda-a)^{-1}\in\spec(R(a))\} ,
\label{eq:spmH} \\
\spe(H) &=\{\lambda\in\C \mid (\lambda-a)^{-1}\in\spe(R(a)) \}.
\label{eq:smH}
\end{align}
\emph{$H$ is affiliated to a $C^*$-algebra $\rc$} of operators on
$\ch$ if its resolvent set is not empty and $(H-z)^{-1}\in\rc$ for
some, hence all, $z\in\Omega(H)$ (cf.\ the text just above Theorem
\ref{th:GIU}).

\subsubsection{Resolvents}

We call \emph{resolvent}%
\footnote{ The usual terminology is ``pseudo-resolvent'' but in our
  context, and especially in the case of self-adjoint operators, it is
  unnatural to emphasize the distinction between densely and non
  densely defined operators.  }
any map $R: \Omega\to B(\ch)$ with $\Omega\subset\C$ not empty
satisfying $R(a)-R(b)=(a-b)R(a)R(b)$ for all $a,b\in \Omega$.
Equivalently:
\begin{equation}\label{eq:resid}
(1-(b-a)R(a))(1-(a-b)R(b))=1 \quad
\forall a,b\in \Omega .
\end{equation}
If we denote $R_{za}=1-(z-a)R(a)$ for $a\in\Omega$ and $z\in\C$ then
the preceding condition may be written
$R_{ab}R_{ba}=1\ \forall a,b\in\Omega$. Equivalently, $R_{ab}$ is
invertible and $R_{ab}^{-1}=R_{ba}$ if $a,b\in\Omega$. One may easily
check that for such $a,b$ we have
$R_{za}R_{ab}=R_{zb}\ \forall z\in\C$. Moreover, we clearly have
$R(z) =R(a)R_{az}=R(a) R_{za}^{-1}$ if $a,z\in\Omega$.

$R$ is called \emph{maximal} if there is no resolvent which extends
$R$ to a strictly larger domain. Fix some $a\in\Omega$. According to
\cite[Th.\ 5.8.6]{HP}, \emph{each resolvent $R$ has a unique maximal
  extension, the domain of this extension is the open set consisting
  of all $z\in\C$ such that $R_{za}$ is invertible, and the value of
  the extension at such a $z$ is $R(a) R_{za}^{-1}$} (the proof is
easy by the comments above).  We keep the notation $R$ for the maximal
extension of $R$.

Following \cite[\S5]{D}, we define the \emph{spectrum of the
  resolvent} $R$ as the complement of the domain of its maximal
extension, in other terms
\begin{equation}\label{eq:spres}
  \spec(R)=\{z\in\C\mid R_{za}:\ch\to\ch
  \text{ is not bijective}\}. 
\end{equation}
Then the \emph{essential spectrum} of $R$ is the subset of the
spectrum defined by
\begin{equation}\label{eq:speres}
  \spe(R)=\{z\in\C\mid R_{za}:\ch\to\ch
  \text{ is not Fredholm}\} .
\end{equation}
These definitions are independent of the choice of $a$: if, for
example, $R_{za}$ is Fredholm then $R_{zb}=R_{za}R_{ab}$ is Fredholm
too because $R_{ab}$ is invertible. Then by taking into account the
expression of $R_{za}$ we get for any $a\in\Omega$
\begin{align}
\spec(R) & =\{z\in\C\mid (z-a)^{-1}\in\spec(R(a))\},
\label{eq:spmtr} \\
\spe(R) & =\{z\mid(z-a)^{-1}\in\spe(R(a))\}.
\label{eq:smtr}
\end{align}
Clearly $R$ is a restriction of the resolvent of a closed operator $H$
as in \S\ref{sss:RH} if and only if $R(z):\ch\to\ch$ is injective for
some, hence for all, $z\in\Omega$.  Then from \eqref{eq:spmH} and
\eqref{eq:smH} we get $\spec(R)=\spec(H)$ and $\spe(R)=\spe(H)$.

Now let $\rc$ be a $C^*$-algebra of operators on $\ch$ and
$R:\Omega\to B(\ch)$ a resolvent. We say that $R$ is \emph{affiliated}
to $\rc$ if $R$ is $\rc$-valued, i.e.\
$R(z)\in\rc\ \forall z\in\Omega$. Note that for this it suffices that
$R(a)\in\rc$ for some $a\in\Omega$. Indeed, then $R_{za}\in\rc+\C$
hence its inverse $R_{az}$ also belongs to $\rc+\C$ so $R(z)\in\rc$.
Notice that if $\pi:\rc\to\rd$ is a morphism from $\rc$ to a
$C^*$-algebra $\rd$ then $\pi\circ R$ will be a resolvent affiliated
to $\rd$.

\begin{theorem}\label{th:GIU}
  Let $\ca$ be as in \emph{\S\ref{ss:alg}} and let $R:\Omega\to\ra$ be
  a resolvent.  Then for any $z\in\Omega$ and any $\vkappa\in\ca^\dag$
  the limit
  $\lim_{x\to\vkappa}\e^{\rmi xp}R(z)\e^{-\rmi xp}\doteq R_\vkappa(z)$
  exists locally in norm and defines a resolvent
  $R_\vkappa:\Omega\to\re$.  We have
\begin{equation}\label{eq:GIU}
\spe (R)= \ccup_{\vkappa\in\ca^\dagger} \spec(R_\vkappa).
\end{equation} 
\end{theorem}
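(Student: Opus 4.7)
The strategy is to reduce the statement to the bounded case (Theorem \ref{th:GIA}) via the spectral mapping formulas \eqref{eq:spmtr}--\eqref{eq:smtr} for resolvents.

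First I would establish existence of $R_\vkappa$ and verify it is a resolvent. For each fixed $z\in\Omega$ we have $R(z)\in\ra$, so Theorem \ref{th:GIold} gives the local norm limit
\[
R_\vkappa(z)\doteq\ulim_{x\to\vkappa}\e^{\rmi xp}R(z)\e^{-\rmi xp}=\tau_\vkappa(R(z))\in\re
\qquad\forall\,\vkappa\in\ca^\dagger.
\]
Since $\tau_\vkappa:\ra\to\re$ is a $*$-morphism of $C^*$-algebras, applying it to the identity $R(a)-R(b)=(a-b)R(a)R(b)$ yields $R_\vkappa(a)-R_\vkappa(b)=(a-b)R_\vkappa(a)R_\vkappa(b)$, so the map $R_\vkappa:\Omega\to\re$ is a resolvent (this is exactly the observation made in the paragraph just before the theorem that morphisms carry resolvents to resolvents).

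For the spectral identity \eqref{eq:GIU}, I would fix any $a\in\Omega$ and chain three equivalences. First, by \eqref{eq:smtr} applied to $R$,
\[
\lambda\in\spe(R)\iff(\lambda-a)^{-1}\in\spe(R(a)).
\]
Second, $R(a)\in\ra$, so Theorem \ref{th:GIA} gives
\[
\spe(R(a))=\ccup_{\vkappa\in\ca^\dagger}\spec(R(a)_\vkappa)
=\ccup_{\vkappa\in\ca^\dagger}\spec(R_\vkappa(a)),
\]
using that $R(a)_\vkappa=\tau_\vkappa(R(a))=R_\vkappa(a)$ by construction. Third, \eqref{eq:spmtr} applied to the resolvent $R_\vkappa$ (at the same point $a\in\Omega$, which lies in the domain of $R_\vkappa$) gives $\lambda\in\spec(R_\vkappa)\iff(\lambda-a)^{-1}\in\spec(R_\vkappa(a))$. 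Composing these three equivalences yields $\lambda\in\spe(R)\iff\exists\,\vkappa\in\ca^\dagger:\lambda\in\spec(R_\vkappa)$, i.e.\ \eqref{eq:GIU}.

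The argument is essentially mechanical once one has Theorem \ref{th:GIA} and the resolvent spectral mapping theorems recorded earlier; I do not see a genuine obstacle. The only subtlety worth mentioning explicitly is that $\spec(R_\vkappa)$ is defined via the \emph{maximal} extension of $R_\vkappa$, while the definition of $R_\vkappa$ only produces its values on $\Omega$; however, this causes no trouble because \eqref{eq:spmtr} is valid for any $a$ in the domain of $R_\vkappa$, and our chosen $a\in\Omega$ certainly lies there. A minor point to verify carefully is that $R(a)_\vkappa=R_\vkappa(a)$, but this is immediate from both sides being defined as $\tau_\vkappa(R(a))$.
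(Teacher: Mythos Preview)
Your proposal is correct and follows essentially the same approach as the paper's proof: existence of $R_\vkappa$ via Theorem \ref{th:GIold}, the resolvent identity preserved because $\tau_\vkappa$ is a morphism, then Theorem \ref{th:GIA} applied to $R(a)\in\ra$ combined with the spectral mapping formulas \eqref{eq:spmtr}--\eqref{eq:smtr}. You have simply made the chain of equivalences more explicit than the paper's terse three-sentence version.
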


\begin{proof}
  The existence of the limit and the fact that $R_\vkappa$ is a
  resolvent affiliated to $\re$ are consequences of Theorem
  \ref{th:GIold}. Then
  $\spe (R(a))= \ccup_{\vkappa\in\ca^\dagger} \spec(R_\vkappa(a))$ for
  any $a\in\Omega$ by Theorem \ref{th:GIA}. The relation
  \eqref{eq:GIU} follows from \eqref{eq:spmtr} and \eqref{eq:smtr}.
\end{proof}

\subsubsection{Regular resolvents}\label{sss:regular}

Our next purpose is to express Theorem \ref{th:GIU} in terms of an
operator $H$ such that $R(z)=(H-z)^{-1}$ in a generalized sense.  As
we already mentioned, even if we assume that $R$ is the resolvent of
an operator as in \S\ref{sss:RH}, the resolvents $R_\vkappa$ will in
general not be resolvents of operators in this sense and we need this
generalization to treat them.

Consider a resolvent $R:\Omega\to B(\ch)$.  Then the subspace
$R(z)\ch$ is independent of $z$ and we denote $\ch_R$ its closure.
The closed subspace $\cn_R=\ker R(z)$ is also independent of $z$.  And
$R$ is a restriction of the resolvent of a closed operator if and only
if $\cn_R=\{0\}$ and this operator is densely defined if and only if
$\ch_R=\ch$.  We say that $R$ is a \emph{regular resolvent} if
\begin{equation}\label{eq:kato}
  \exists z_n\in \Omega \text{ with } |z_n|\to \infty \text{ such
    that } \|z_nR(z_n)\|\leq\text{ const} .
\end{equation}
If $R$ is a regular resolvent affiliated to a $C^*$-algebra $\rc$ and
if $\pi:\rc\to\rd$ is a morphism, then $\pi\circ R$ is regular and
affiliated to $\rd$.  The following fact has been proved in \cite{K}.

\begin{lemma}\label{lm:kato}
  If $R$ is a regular resolvent then $\ch_R\cap\cn_R=\{0\}$ and
  $\ch=\ch_R+\cn_R$. 
\end{lemma}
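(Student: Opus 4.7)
The plan is to study the uniformly bounded family $T_n \doteq z_n R(z_n)$, with $\|T_n\| \le M$ by \eqref{eq:kato}, and to analyze its action on $\ch_R$ and $\cn_R$ separately. The resolvent identity gives $R(a)R(z_n) = (z_n - a)^{-1}(R(z_n) - R(a))$ for large $n$, hence for any $v \in \ch$,
\[
T_n R(a) v \;=\; z_n R(z_n) R(a) v \;=\; \frac{z_n}{z_n - a}\bigl(R(z_n)v - R(a)v\bigr).
\]
Since $\|R(z_n)v\| = \|T_n v\|/|z_n| \le M\|v\|/|z_n| \to 0$ and $z_n/(z_n - a) \to 1$, this tends to $-R(a)v$ in norm. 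So $T_n u \to -u$ strongly on the dense subspace $R(a)\ch$ of $\ch_R$, and by the uniform bound the same holds on all of $\ch_R$. Trivially $T_n u = 0$ for $u \in \cn_R$. The first claim $\ch_R\cap\cn_R = \{0\}$ is now immediate: if $u$ lies in both spaces then $0 = T_n u \to -u$, forcing $u = 0$.

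For the decomposition $\ch = \ch_R + \cn_R$, I would fix $u \in \ch$ and use weak sequential compactness: since $(T_n u)$ is bounded in the Hilbert space $\ch$, some subsequence $T_{n_k} u$ converges weakly to some $w$. The range of every $T_n$ is contained in $R(z_n)\ch = R(a)\ch \subset \ch_R$ (all $R(z)\ch$ coincide), so $w \in \ch_R$ because $\ch_R$ is a closed and therefore weakly closed subspace. Set $u_1 \doteq -w \in \ch_R$. Applying the bounded operator $R(a)$ gives $R(a) T_{n_k} u \rightharpoonup R(a) w$, while the same resolvent-identity computation used above shows $R(a) T_n u \to -R(a) u$ in norm. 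Comparing the two limits yields $R(a) w = -R(a) u$, i.e.\ $R(a)(u - u_1) = 0$, so $u - u_1 \in \cn_R$ and $u = u_1 + (u - u_1) \in \ch_R + \cn_R$.

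The main point requiring care is that $T_n$ need not converge on all of $\ch$; only a subsequence depending on $u$ converges weakly, and one must identify the weak limit as an element of $\ch_R$ and check that the complementary piece lies in $\cn_R$. The regularity hypothesis \eqref{eq:kato} enters twice in an essential way: to promote the strong convergence $T_n \to -I$ from the dense subspace $R(a)\ch$ to all of $\ch_R$, and to provide the weak compactness used in the decomposition step.
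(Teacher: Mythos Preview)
Your proof is correct. Note that the paper itself does not give a proof of this lemma: it simply states ``The following fact has been proved in \cite{K}'' and defers to Kato's 1959 note. Your argument is essentially the classical one found there, so there is nothing to compare beyond observing that you have supplied the details the paper omits.
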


In particular: \emph{if $R$ is regular and $\cn_R=\{0\}$ then there is
  a closed densely defined operator $H$ in $\ch$ such that
  $R(z)=(H-z)^{-1}$ for all $z\in \Omega$}. 

Since $R(z)\ch_R\subset\ch_R$ for any $z\in\Omega$, the restrictions
$R^\circ(z)=R(z)|\ch_R$ define a resolvent in $\ch_R$ and the range of
$R^\circ(z)$ is dense in $\ch_R$. If $R$ is regular then $R^\circ$
satisfies \eqref{eq:kato} hence is regular, so there is a closed
densely defined operator $H$ in the Hilbert space $\ch_R$ such that
$R^\circ(z)=(H-z)^{-1}$ for all $z\in\Omega$. Obviously the domain of
$H$ is $D(H)=R(z)\ch$ and we may, and we shall, think of $H$ as a
closed operator in $\ch$ such that $HD(H)$ is contained in the closure
$\ch_R$ of $D(H)$ in $\ch$. We say that $H$ is the \emph{operator
  associated to the regular resolvent} $R$.  Note that the resolvent
$R$ is not completely defined by its associated operator $H$, one must
also specify a closed subspace $\cn_R$ supplementary to
$\ch_R=\overline{D(H)}$.

\begin{example}{\rm The operator associated to the resolvent $R=0$
    with domain $\Omega=\C$ is denoted $H=\infty$. We have
    $D(H)=\{0\}$ and $\sigma(H)=\emptyset$.
}\end{example}

\begin{lemma}\label{lm:assoc}
Let $R$ be a regular resolvent and $H$ the operator associated to
it. Denote $\spec(H)$ and $\spe(H)$ the spectrum and the essential
spectrum of $H$ considered as operator in the Hilbert space
$\ch_R$. Then $\spec(R)=\spec(H)$ and $\spe(R)=\spe(H)$.
\end{lemma}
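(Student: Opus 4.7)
The plan is to exploit the topological direct sum decomposition $\ch = \ch_R + \cn_R$ guaranteed by Lemma \ref{lm:kato}. Since $\ch_R$ and $\cn_R$ are closed subspaces with trivial intersection summing to $\ch$, the closed graph theorem produces a bounded projection onto $\ch_R$ along $\cn_R$, so the decomposition is topological and every bounded operator preserving it has a block-diagonal matrix form with well-behaved invertibility and Fredholm properties.

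Next I would observe that for every $a\in\Omega$, the operator $R(a)$ respects this decomposition. Indeed $R(a)$ annihilates $\cn_R$ by the very definition $\cn_R=\ker R(a)$, and its range lies in $\ch_R$, so in particular $R(a)\ch_R\subset\ch_R$. Consequently $R_{za}=1-(z-a)R(a)$ is block-diagonal, acting as $R^\circ_{za}\doteq 1-(z-a)R^\circ(a)$ on $\ch_R$ and as the identity on $\cn_R$.

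Since the identity on $\cn_R$ is both invertible and Fredholm (with trivial kernel and cokernel), the operator $R_{za}:\ch\to\ch$ is invertible (respectively Fredholm) if and only if $R^\circ_{za}:\ch_R\to\ch_R$ is invertible (respectively Fredholm). Combining the definitions \eqref{eq:spres}--\eqref{eq:speres} of $\spec(R)$ and $\spe(R)$ with the spectral mapping identities \eqref{eq:spmH}--\eqref{eq:smH} applied to the closed densely defined operator $H$ in $\ch_R$ (for which $R^\circ(a)=(H-a)^{-1}$) immediately yields $\spec(R)=\spec(H)$ and $\spe(R)=\spe(H)$.

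The only obstacle, and a mild one, is the rigorous justification of the block decomposition: one must verify that the algebraic direct sum is topological, and that invertibility and Fredholmness of a block-diagonal operator are equivalent to the conjunction of the corresponding properties on each block. Both are standard consequences of the existence of the bounded complementary projection from Lemma \ref{lm:kato}, so no substantial new work is needed beyond a careful bookkeeping of the matrix representation.
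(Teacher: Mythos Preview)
Your proposal is correct and follows essentially the same approach as the paper: decompose $\ch=\ch_R\oplus\cn_R$ via Lemma~\ref{lm:kato}, observe $R_{za}=R^\circ_{za}\oplus 1$, and read off invertibility and Fredholmness blockwise. The paper's proof is slightly terser and first separates the trivial case $\cn_R=\{0\}$, but the content is identical.
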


\begin{proof}
  We have already shown this in case $\cn_R=\{0\}$, so we may assume
  $\cn_R$ is not trivial. Relatively to the topological direct sum
  (but not orthogonal in general) decomposition $\ch=\ch_R\oplus\cn_R$
  we have $R(z)=R^\circ(z)\oplus0$. As mentioned after
  \eqref{eq:spmtr} and \eqref{eq:smtr} we have
  $\spec(R^\circ)=\spec(H)$ and $\spe(R^\circ)=\spe(H)$. Clearly
  $R_{za}=R^\circ_{za}\oplus1$ hence $R_{za}:\ch\to\ch$ is bijective
  if and only if $R^\circ_{za}:\ch_R\to\ch_R$ is bijective, so
  $\spec(R)=\spec(R^\circ)$. We also have
  $\ker(R_{za})=\ker(R^\circ_{za})\oplus\{0\}$ and
  $R_{za}\ch=R^\circ_{za}\ch_R\oplus\cn_R$ hence $R_{za}$ is Fredholm
  if and only if $R^\circ_{za}$ is Fredholm, so $\spe(R)=\spe(H)$.
\end{proof}

Let $R$ be a regular resolvent affiliated to $\ra$ and let $H$ be the
operator associated to it. If $R_\vkappa$ is as in Theorem
\ref{th:GIU} then $R_\vkappa$ is a regular resolvent affiliated to
$\re$ and we denote $H_\vkappa$ the operator associated to it. Then
the relation $\ulim_{x\to\vkappa}\e^{\rmi xp}H\e^{-\rmi xp}=H_\vkappa$
is an abbreviation for
``$\lim_{x\to\vkappa}\e^{\rmi xp}R(z)\e^{-\rmi xp}\doteq R_\vkappa(z)$
locally in norm for any $z\in\Omega,\vkappa\in\ca^\dag$''. Then
according to Theorem \ref{th:GIU} and Lemma \ref{lm:assoc} we have
\begin{equation}\label{eq:GIUH}
\spe (H)= \ccup_{\vkappa\in\ca^\dagger} \spec(H_\vkappa).
\end{equation} 

\subsubsection{Self-adjoint resolvents}

Finally, let us summarize these results in the self-adjoint case (see
\cite[\S8.1.2]{ABG} for detailed proofs).  \emph{A resolvent $R$ is
  called self-adjoint} if $\Omega$ is stable under conjugation and
$R(z)^*=R(\bar{z})$ $\forall z\in\Omega$. This implies
$\cn_R=\ch_R^\perp$.  Then there is a unique morphism
$\Phi:C_0(\R)\to B(\ch)$ such that $\Phi(r_z)=R(z)$ for all
$z\in\Omega$, where $r_z(\lambda)=(\lambda-z)^{-1}$ for
$\lambda\in\R$. The map $R\mapsto\Phi$ is a bijective correspondance
between self-adjoint resolvents on $\ch$ and morphisms
$C_0(\R)\to B(\ch)$.  A self-adjoint resolvent is clearly regular and
the operator associated to it is a densely defined self-adjoint
operator in the Hilbert space $\ch_R$.

Let us call \emph{observable on $\ch$} any densely defined
self-adjoint operator acting in a closed subspace of $\ch$.  The map
$R\mapsto H$ is a bijective correspondance between self-adjoint
resolvents on $\ch$ and observables on $\ch$.  Thus we may identify
observables $H$, self-adjoint resolvents $R$, and morphisms $\Phi$. An
observable is affiliated to $\rc$ if $R$ is affiliated to $\rc$, or if
$\Phi:C_0(\R)\to\rc$.

If $H$ is an observable affiliated to $\ra$ then
$\ulim_{x\to\vkappa}\e^{\rmi xp}\Phi(u)\e^{-\rmi xp}= \Phi_\vkappa(u)$
exists for any $u\in C_0(\R)$, by the Stone-Weierstrass theorem.  Now
if we apply the preceding results with $\ra=\re$ we get:

\begin{theorem}\label{th:GIH}
  Let $H$ be an observable on $L^2(X)$ such that for some number $z$
  in its resolvent set its resolvent $R(z)=(H-z)^{-1}$ satisfies
\begin{equation}\label{eq:GIH}
  \lim_{a\to0}\|(\e^{\rmi ap}-1)R(z)\|=0
  \text{ and }
  \lim_{a\to0}\|[\e^{\rmi aq}, R(z)]\|=0.
\end{equation}
Then for any $\vkappa\in X^\dagger$ the limit
$\ulim_{x\to\vkappa} \e^{\rmi xp}H\e^{-\rmi xp}\doteq H_\vkappa$
exists and
\begin{equation}\label{eq:GIH2}
\spe (H)= \ccup_{\vkappa\in X^\dagger} \spec(H_\vkappa).
\end{equation} 
\end{theorem}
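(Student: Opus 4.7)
The strategy is to recognize the hypothesis \eqref{eq:GIH} as affiliation of the resolvent to the elliptic algebra $\re$ (invoking Remark~\ref{re:cbu} which identifies $\re$ with the crossed product $C^\rmu_\rmb(X)\rtimes X$), then apply Theorem~\ref{th:GIU} with $\ca=C^\rmu_\rmb(X)$, use Lemma~\ref{lm:assoc} to pass from resolvents to the associated observables, and finally translate characters of $C^\rmu_\rmb(X)$ into ultrafilters on $X$ via Remark~\ref{re:cbu}.

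First I would check that $R(z)\in\re$ for every $z$ in the resolvent set of $H$. Comparing \eqref{eq:GIH} with the definition \eqref{eq:ellalg}, the only missing ingredient is the first condition applied to $R(z)^{*}=R(\bar z)$. This follows from the resolvent identity $R(\bar z)=R(z)\bigl(1+(\bar z-z)R(\bar z)\bigr)$, which gives $(\e^{\rmi ap}-1)R(\bar z)=(\e^{\rmi ap}-1)R(z)\cdot\bigl(1+(\bar z-z)R(\bar z)\bigr)$ and hence tends to zero in norm as $a\to 0$. The same resolvent identity shows that if \eqref{eq:GIH} holds for one $z\in\Omega$ it holds for all of them. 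Thus $R$ is a self-adjoint resolvent affiliated to $\re$.

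Next I would apply Theorem~\ref{th:GIU} with $\ca=C^\rmu_\rmb(X)$. For every $\chi\in\ca^\dagger$ the local-norm limit $R_\chi(z)\doteq\ulim_{x\to\chi}\e^{\rmi xp}R(z)\e^{-\rmi xp}$ exists, defines a resolvent affiliated to $\re$, and satisfies $\spe(R)=\bigcup_{\chi\in\ca^\dagger}\spec(R_\chi)$. The self-adjointness transfers to the limit: for $u,v\in L^2(X)$ with $u$ of compact support, local-norm convergence yields $\langle u,R_\chi(z)^{*}v\rangle=\lim_x\langle u,R_x(\bar z)v\rangle=\langle u,R_\chi(\bar z)v\rangle$, so $R_\chi(z)^{*}=R_\chi(\bar z)$ and $R_\chi$ is a self-adjoint (hence regular) resolvent. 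By §\ref{sss:regular} it corresponds to an observable $H_\chi$ on $L^2(X)$, and Lemma~\ref{lm:assoc} gives $\spec(R_\chi)=\spec(H_\chi)$ and $\spe(R)=\spe(H)$.

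Finally I would pass from $\ca^\dagger$ to $X^\dagger$ via Remark~\ref{re:cbu}: each character $\chi\in C^\rmu_\rmb(X)^\dagger$ is of the form $\chi(\varphi)=\lim_{x\to\vkappa}\varphi(x)$ for some $\vkappa\in X^\dagger$, and every $\vkappa\in X^\dagger$ arises this way. With this identification the translation morphism $\tau_\chi$ agrees with the limit along $\vkappa$, so the family $\{H_\chi\}_{\chi\in\ca^\dagger}$ coincides (as a set-valued family of operators) with $\{H_\vkappa\}_{\vkappa\in X^\dagger}$; in particular the two unions of spectra are equal, which yields \eqref{eq:GIH2}. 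The only non-bookkeeping step is the passage of self-adjointness through the local-norm limit, and this is handled by the short matrix-element computation above; the remainder is an assembly of results already proved in the paper.
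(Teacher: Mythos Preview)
Your proposal is correct and follows essentially the same route as the paper, which simply states ``if we apply the preceding results with $\ra=\re$ we get'' Theorem~\ref{th:GIH}; you have merely made explicit the steps (affiliation of $R(z)$ to $\re$, Theorem~\ref{th:GIU}, Lemma~\ref{lm:assoc}, and the character/ultrafilter correspondence of Remark~\ref{re:cbu}) that the paper leaves implicit. One small simplification: since $\tau_\chi$ is a $*$-morphism, $R_\chi(z)^*=\tau_\chi(R(z)^*)=\tau_\chi(R(\bar z))=R_\chi(\bar z)$ follows immediately without the matrix-element computation.
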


\subsection{Differential operators}\label{ss:diff}

We now present some applications of the abstract results from
\S\ref{ss:mresults} and \S\ref{ss:unbounded} to differential operators
emphasising those of interest in quantum mechanics.  In fact all we
have to do is to give examples of operators affiliated to the algebra
$\ra$ and which are of some independent interest. In the maximal case
$\ra=\re$ this is easy because the conditions of affiliation to
$\re$ are very explicit and easy to check. For other $\ra$ one may
use the perturbative affiliation criteria developed in \cite{DG}, see
for example Theorems 2.5 and 2.8 there (these results can be extended
to non self-adjoint operators in a rather obvious way, but we shall
not discuss this topic here).  For some $\ra$ there are criteria of
affiliation of the same nature as in the case of $\re$, e.g.\
\cite[Th.\ 5.2]{GN}.

\subsubsection{Non self-adjoint operators}

We consider first a class of non self-adjoint differential operators
affiliated to $\re$. For simplicity of the presentation we consider
only relatively bounded perturbations of the Laplacian, the extension
to more general operators requires more formalism \cite[\S6.6]{GI4}.
More singular operators will be treated in the self-adjoint case.

We recall a general fact.  Let $\ch$ be a Hilbert space and $\cg$ a
dense subspace equipped with a Hilbert structure such that the
embedding $\cg\hookrightarrow\ch$ is continuous. We identify the
adjoint space $\ch^*=\ch$ via the Riesz isomorphism and then embed
$\cg\hookrightarrow\ch\hookrightarrow\cg^*$ as usual.  The operator
$\what{T}$ in $\ch$ associated to, or induced by, $T\in B(\cg,\cg^*)$
is the restriction of $T$ to $T^{-1}(\ch)=\{u\in\cg\mid Tu\in\ch\}$
considered as operator in $\ch$.  If $z\in\C$ then
$\what{T-z}=\what{T}-z$. One may easily check that if there is $z$
such that $T-z:\cg\to\cg^*$ is bijective then $\what{T}$ is a densely
defined closed operator in $\ch$ such that $z$ belongs to its
resolvent set and $\what{T}^*=\what{T^*}$.

From now on, unless otherwise explicitly stated, we take $\ch=L^2(X)$
with norm $\|\cdot\|$ and for real $s$ we denote $\ch^s\equiv\ch^s(X)$
the usual Sobolev space with norm $\|\cdot\|_s$.

Consider an operator $V\in B(\ch^1,\ch^{-1})$ such that
$\Re V\geq-\mu\Delta-\nu$ as forms on $\ch^1$ for some numbers
$\mu,\nu$ with $\mu<1$; here $\Re V=(V+V^*)/2$.  Then
$\Delta+V:\ch^1\to\ch^{-1}$ is a continuous operator whose adjoint is
$\Delta+V^*:\ch^1\to\ch^{-1}$. If $u\in\ch^1$ and $\lambda>\nu$ then
\begin{equation}\label{eq:DV}
\Re\braket{u}{(\Delta+V+\lambda)u} \geq
\braket{u}{[(1-\mu)\Delta+(\lambda-\nu)]u} \geq c\|u\|_1^2
\end{equation}
for a constant $c>0$. This implies
$c\|u\|_1\leq\|(\Delta+V+\lambda)u\|_{-1}$ and the same estimate with
$V$ replaced by $V^*$. Hence $\Delta+V+\lambda:\ch^1\to\ch^{-1}$ is
bijective if $\lambda>\nu$ so the operator $H$ associated to
$\Delta+V$ in $\ch$ is closed, densely defined, the half-line
$]-\infty,-\nu[$ is included in the resolvent set of $H$, and $H^*$ is
the operator induced by $\Delta+V^*$ in $\ch$.  Moreover from
\eqref{eq:DV} we get for $u\in D(H)$
\[
(\lambda-\nu)\|u\|^2\leq \Re\braket{u}{(\Delta+V+\lambda)u}\leq
\|u\| \|(\Delta+V+\lambda)u\|              
\]
hence the resolvent of $H$ satisfies
$(\lambda-\nu)\|R(-\lambda)\|\leq1$ for $\lambda>\nu$, so is regular.

\begin{theorem}\label{th:nsad}
  Let $V\in B(\ch^1,\ch^{-1})$ such that
  $\lim_{a\to0}\|[\e^{\rmi aq},V]\|_{\ch^1\to\ch^{-1}}=0$ and
  $\Re V\geq-\mu\Delta-\nu$ with $\mu<1$.  Then the operator $H$ in
  $\ch$ associated to $\Delta+V$ is affiliated to $\re$ hence
  $\spe(H)=\cup_{\vkappa\in X^\dagger}\spec(H_\vkappa)$.
\end{theorem}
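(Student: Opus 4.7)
The plan is to check that $R(z)=(H-z)^{-1}\in\re$ for $z=-\lambda$ with $\lambda>\nu$. Once this is done, $R$ is a regular resolvent (the estimate $(\lambda-\nu)\|R(-\lambda)\|\leq 1$ established just before the statement yields \eqref{eq:kato}) affiliated to $\re$, so Theorem \ref{th:GIU} together with Lemma \ref{lm:assoc} and Remark \ref{re:cbu} gives $\spe(H)=\ccup_{\vkappa\in X^\dagger}\spec(H_\vkappa)$ as required. Affiliation to $\re$ amounts to verifying the two conditions of \eqref{eq:ellalg} for both $R(z)$ and $R(z)^*$.

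\textbf{First condition.} The coercivity estimate \eqref{eq:DV} shows that $\Delta+V-z:\ch^1\to\ch^{-1}$ is bijective; let $\tilde R(z):\ch^{-1}\to\ch^1$ be its bounded inverse, so that $R(z)=\tilde R(z)|_\ch$ is bounded from $\ch$ into $\ch^1$. Combined with the elementary estimate $\|(\e^{\rmi ap}-1)u\|\leq|a|\,\|u\|_1$ for $u\in\ch^1$, this yields
\[
\|(\e^{\rmi ap}-1)R(z)\|\leq|a|\,\|R(z)\|_{\ch\to\ch^1}\longrightarrow 0.
\]
The same reasoning applies to $R(z)^*$, which is the resolvent at $\bar z$ of the operator $H^*$ associated with $\Delta+V^*$; the coercivity \eqref{eq:DV} is invariant under $V\mapsto V^*$ since $\Re V^*=\Re V$.

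\textbf{Commutator condition.} The rigged-space commutator identity
\[
[\e^{\rmi aq},\tilde R(z)]=\tilde R(z)\,[\Delta+V,\e^{\rmi aq}]\,\tilde R(z),
\]
valid as an equality of bounded maps $\ch^{-1}\to\ch^1$, gives the bound
\[
\|[\e^{\rmi aq},R(z)]\|_{\ch\to\ch}
\leq\|\tilde R(z)\|_{\ch^{-1}\to\ch^1}\,
\|[\Delta+V,\e^{\rmi aq}]\|_{\ch^1\to\ch^{-1}}\,
\|\tilde R(z)\|_{\ch\to\ch^1}.
\]
A direct computation gives $[\Delta,\e^{\rmi aq}]u=\e^{\rmi aq}(2\rmi a\cdot\nabla u-|a|^2 u)$, whose norm as a map $\ch^1\to\ch^{-1}$ is $O(|a|)$; combined with the hypothesis $\|[\e^{\rmi aq},V]\|_{\ch^1\to\ch^{-1}}\to 0$, this forces $\|[\e^{\rmi aq},R(z)]\|\to 0$. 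For the adjoint, one uses that $\|[\e^{\rmi aq},V^*]\|_{\ch^1\to\ch^{-1}}=\|[\e^{-\rmi aq},V]\|_{\ch^1\to\ch^{-1}}$ also tends to zero, which by the same argument gives $\|[\e^{\rmi aq},R(z)^*]\|\to 0$.

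\textbf{Main obstacle.} The only subtlety is the correct interpretation of the commutator identity at the level of the rigged triple $\ch^1\subset\ch\subset\ch^{-1}$. This, however, is the natural framework: the hypothesis on $V$ is stated precisely at this level, and working with $\tilde R(z):\ch^{-1}\to\ch^1$ (rather than directly with $R(z)$) reduces all manipulations to bounded-operator algebra between Sobolev spaces.
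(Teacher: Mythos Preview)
Your proof is correct and follows essentially the same route as the paper's: verify the two conditions of \eqref{eq:ellalg} for $R(-\lambda)$ (and its adjoint) by using that $R(-\lambda)$ maps $\ch$ into $\ch^1$ for the first, and the resolvent--commutator identity in the rigged triple $\ch^1\subset\ch\subset\ch^{-1}$ for the second. One small point worth noting: in the commutator step the paper writes $[\e^{\rmi aq},R(\lambda)]=R(\lambda)[V,\e^{\rmi aq}]R(\lambda)$, silently dropping the $[\Delta,\e^{\rmi aq}]$ contribution; you correctly keep $[\Delta+V,\e^{\rmi aq}]$ and dispose of the $\Delta$-part by the explicit $O(|a|)$ estimate, so your version is in fact the cleaner one. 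Also, you need not verify the second condition of \eqref{eq:ellalg} separately for $R(z)^*$, since $\|[\e^{\rmi aq},A^*]\|=\|[\e^{-\rmi aq},A]\|$ makes it automatic once it holds for $A$; only the first condition carries the $A^{(*)}$ requirement.
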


\begin{proof}
  We are here in the setting of \S\ref{sss:regular} and the formula
  for the essential spectrum has to be interpreted as in
  \eqref{eq:GIUH}. We just have to prove that $H$ is affiliated to
  $\re$ and for this it suffices to show that for some
  $\lambda>\nu$ 
\begin{equation}\label{eq:GIHV}
  \lim_{a\to0}\|(\e^{\rmi ap}-1)R(\lambda)\|=0 \text{ and } 
  \lim_{a\to0}\|[R(\lambda),\e^{\rmi aq}]\|=0. 
\end{equation}
According to \eqref{eq:ellalg} we should also prove that the first
condition above is satisfied with $R(\lambda)$ replaced by
$R(\lambda)^*$, but this is obvious here because $R(\lambda)^*$ is the
resolvent of the operator $H^*$ which involves $V^*$ and $V^*$
satisfies the same conditions as $V$. The first condition in
\eqref{eq:GIHV} is satisfied because the range of $R(\lambda)$ is
included in $\ch^1$. For the second one we note that $\ch^1$ is stable
under $\e^{\rmi aq}$ so if we denote $\|\cdot\|_{B}$ the norm in
$B(\ch^{-1},\ch^1)$ then
\[
\|[\e^{\rmi aq},R(\lambda)]\|_B=
\|R(\lambda) [V,\e^{\rmi aq}] R(\lambda)\|_B\leq
\|R(\lambda)\|_B^2 \cdot \|[V,\e^{\rmi aq}]\|_{\ch^1\to\ch^{-1}} .
\] 
Thus $\|[\e^{\rmi aq},R(\lambda)]\|_B\to0$ which is more than needed.
\end{proof}

Under stronger conditions on $V$ we have a more explicit description of the
operators $H_\vkappa$. We will say that a function $V:X\to\C$ is
\emph{$\Delta$-small} if $V$ is locally integrable and for each
$\mu>0$ there is $\nu$ such that $|V|\leq\mu\Delta+\nu$. Also, a
symmetric operator $S:\ch^1\to\ch^{-1}$ is $\Delta$-small if for each
$\mu>0$ there is $\nu$ such that $\pm S\leq\mu\Delta+\nu$.

\begin{theorem}\label{th:nsad2}
  Let $V:X\to\C$ be $\Delta$-small.  Then
  $\lim_{x\to\vkappa}V(x+q)\doteq V_\vkappa$ exists in the strong
  topology in $B(\ch^1,\ch^{-1})$ for each $\vkappa\in X^\dagger$, the
  operators $\Re V_\vkappa:\ch^1\to\ch^{-1}$ are $\Delta$-small, and
  $H_\vkappa$ is the operator in $\ch$ associated to
  $\Delta+V_\vkappa:\ch^1\to\ch^{-1}$.
\end{theorem}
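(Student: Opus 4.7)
The plan is to construct $V_\vkappa\in B(\ch^1,\ch^{-1})$ as a limit of $V(q+x)$ along $\vkappa$, verify that $\Re V_\vkappa$ is $\Delta$-small, and identify $H_\vkappa$ with the operator associated to $\Delta+V_\vkappa$ using the resolvent identity together with the already known convergence $R(\lambda)_x\to R_\vkappa(\lambda)$ in $\re_\loc$ supplied by Theorem \ref{th:GIold}.

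First I would observe that $\Delta$-smallness together with the translation invariance of $\Delta$ yields $\sup_{x\in X}\|V(q+x)\|_{B(\ch^1,\ch^{-1})}<\infty$. Since $B(\ch^1,\ch^{-1})$ is a dual Banach space with $\ch^1$ separable, bounded balls are weak$^*$ metrisable and compact, so along $\vkappa$ one extracts a weak limit $V_\vkappa$ characterised by $\lag v,V_\vkappa u\rag=\lim_{x\to\vkappa}\lag v,V(q+x)u\rag$ for all $u,v\in\ch^1$. To upgrade this to strong operator convergence, fix $u\in C_\rmc^\infty(X)$ with $\supp u\subset K$; both $V(q+x)u$ and $V_\vkappa u$ are then supported in $K$, and the uniform bound $\||V(\cdot+x)|^{1/2}u\|_{L^2}\le C\|u\|_1$ supplied by $\Delta$-smallness, combined with Rellich's compact embedding of fixed-support $L^2$-bounded families into $\ch^{-1}$, shows that $\{V(q+x)u\}_x$ is relatively compact in $\ch^{-1}$. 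Relative compactness plus weak convergence forces norm convergence, and a density argument using the uniform operator bound extends this to strong operator convergence on all of $\ch^1$.

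The $\Delta$-smallness of $\Re V_\vkappa$ follows by passing to the weak limit in the form inequality $\pm\lag u,\Re V(q+x)u\rag\le\mu\lag u,\Delta u\rag+\nu(\mu)\|u\|^2$, which gives $\pm\Re V_\vkappa\le\mu\Delta+\nu(\mu)$ for each $\mu>0$. The construction preceding Theorem \ref{th:nsad} then defines a closed operator $H'_\vkappa$ in $\ch$ associated to $\Delta+V_\vkappa:\ch^1\to\ch^{-1}$, affiliated to $\re$. To identify $H_\vkappa=H'_\vkappa$, I apply the resolvent identity
\[
R(\lambda)_x-(H'_\vkappa-\lambda)^{-1}=-R(\lambda)_x\bigl(V(q+x)-V_\vkappa\bigr)(H'_\vkappa-\lambda)^{-1}
\]
for $\lambda$ sufficiently negative. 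For each $g\in\ch$, set $u:=(H'_\vkappa-\lambda)^{-1}g\in\ch^1$; the strong convergence $V(q+x)u\to V_\vkappa u$ in $\ch^{-1}$, combined with the uniform bound $\sup_x\|R(\lambda)_x\|_{\ch^{-1}\to\ch^1}<\infty$, gives $R(\lambda)_xg\to(H'_\vkappa-\lambda)^{-1}g$ in $\ch$. Testing against compactly supported $g$ and comparing with the $\re_\loc$-convergence $R(\lambda)_x\to R_\vkappa(\lambda)$ forces $R_\vkappa(\lambda)=(H'_\vkappa-\lambda)^{-1}$ and hence $H_\vkappa=H'_\vkappa$.

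The principal obstacle is the upgrade from weak to strong operator convergence of $V(q+x)$. The $\Delta$-smallness gives only $L^1_\loc$ control on $V$, so in dimensions $d\ge 2$ the Rellich compactness $L^2_K\hookrightarrow\ch^{-1}$ cannot be applied naively to $V(q+x)u$; one must instead exploit the factorisation $V(\cdot+x)=|V(\cdot+x)|^{1/2}\cdot\mathrm{sign}(V(\cdot+x))\cdot|V(\cdot+x)|^{1/2}$ with the uniform bound $|V|^{1/2}\in B(\ch^1,L^2)$, or alternatively transfer compactness information from the already known $\re_\loc$-convergence of the resolvents $R(\lambda)_x$.
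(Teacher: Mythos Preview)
Your overall architecture matches the paper's proof: both pass the form inequality $\pm\Re V(q+x)\le\mu\Delta+\nu$ to the limit to get $\Delta$-smallness of $\Re V_\vkappa$, and both identify $H_\vkappa$ with the operator associated to $\Delta+V_\vkappa$ via exactly the resolvent identity you wrote, comparing the resulting strong limit of $R(\lambda)_x$ with the already known local-norm limit $R_\vkappa(\lambda)$ from Theorem~\ref{th:GIold}. That part is correct and essentially verbatim what the paper does.

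The divergence, and the genuine gap, is in establishing the strong convergence $V(q+x)\to V_\vkappa$ in $B(\ch^1,\ch^{-1})$. The paper does not attempt a self-contained compactness argument here: it simply applies Proposition~6.33 of \cite{GI4} to $\Re V$ and $\Im V$ separately (each being real and $\Delta$-small) and takes the strong limit as given. Your direct attack via Rellich runs into precisely the obstacle you flag in your final paragraph. $\Delta$-smallness yields $|V|^{1/2}\in B(\ch^1,L^2)$ uniformly, but gives no $L^2$ bound on $V(q+x)u$ itself, so the compact embedding $L^2_K\hookrightarrow\ch^{-1}$ cannot be invoked. The factorisation $|V|^{1/2}\cdot\mathrm{sign}\cdot|V|^{1/2}$ does not rescue this: the inner factor lands in $L^2$, but the outer $|V|^{1/2}$ only maps $L^2\to\ch^{-1}$ boundedly (by duality), which recovers boundedness in $\ch^{-1}$ and nothing more. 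Your second suggestion, transferring compactness from the $\re_\loc$-convergence of the resolvents, is more promising in spirit but you have not supplied a mechanism; one would need, for instance, to express $V(q+x)(\Delta+\lambda)^{-1}\theta(q)$ for $\theta\in C_\rmc(X)$ in terms of $R(\lambda)_x$ and operators whose convergence is already controlled.

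In short: the identification step is fine and matches the paper; the existence of the strong limit is a gap you have correctly located but not closed, and the paper closes it by citation rather than by a direct argument.
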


\begin{proof}
  If $W$ is the real or imaginary part of $V$ then $W$ is
  $\Delta$-small hence by Proposition 6.33 from \cite{GI4} the
  translates $W(x+q)$ of the operator $W(q)\in B(\ch^1,\ch^{-1})$
  converge strongly to some $W_\vkappa \in B(\ch^1,\ch^{-1})$ hence
  the same holds for the translates $V(x+q)$.  Obviously an estimate
  like $\pm\Re V(x+q)\leq\mu\Delta+\nu$ remains valid in the limit
  $x\to\vkappa$, hence $\Re V_\vkappa:\ch^1\to\ch^{-1}$ is
  $\Delta$-small. If $K_\vkappa$ is the operator in $\ch$ associated
  to $\Delta+V_\vkappa$, it remains to prove that
  $K_\vkappa=H_\vkappa$. Fix $\mu<1$ and let $\nu$ such that
  $|V|\leq\mu\Delta+\nu$. Clearly, if $\lambda>\nu$ then $-\lambda$ is
  in the resolvent set of $K_\vkappa$ and of
  $H_x=\e^{\rmi xp}H\e^{-\rmi xp}=\Delta+V_x$ for any $x\in X$, where
  $V_x=V(x+q)$. Then
\[
(\Delta+V_x+\lambda)^{-1}-(\Delta+V_\vkappa+\lambda)^{-1}=
(\Delta+V_x+\lambda)^{-1}(V_\vkappa-V_x)(\Delta+V_\vkappa+\lambda)^{-1}
\]
holds in $B(\ch^{-1},\ch^1)$, hence for $u\in\ch^{-1}$ we have
\begin{align*}
& \|\big((\Delta+V_x+\lambda)^{-1}
-(\Delta+V_\vkappa+\lambda)^{-1}\big)u\|_1 \\
& \leq \|(\Delta+V+\lambda)^{-1}\|_B
\|(V_\vkappa-V_x)(\Delta+V_\vkappa+\lambda)^{-1}u\|_{-1}
\end{align*}
where $\|\cdot\|_{B}$ is the norm in $B(\ch^{-1},\ch^1)$. The last
factor above converges to zero as $x\to\vkappa$ hence
$(\Delta+V_x+\lambda)^{-1}\to(\Delta+V_\vkappa+\lambda)^{-1}$ strongly
in $B(\ch^{-1},\ch^1)$, which clearly implies
$(H_x+\lambda)^{-1}\to (K_\vkappa+\lambda)^{-1}$ strongly in
$B(\ch)$. But
$\lim_{x\to\vkappa}\e^{\rmi xp}R(-\lambda)\e^{-\rmi xp}=
R_\vkappa(-\lambda)$ locally in norm, hence
$R_\vkappa(-\lambda)=(K_\vkappa+\lambda)^{-1}$. Thus
$H_\vkappa=K_\vkappa$. 
\end{proof}

\subsubsection{Affiliation criteria}

We discuss here general conditions which ensure the affiliation to
$\re$ of self-adjoint operators \cite[\S4.2]{GI4} and in the next
paragraphs give some concrete examples.  In each of these cases one
may use Theorem \ref{th:GIH}.

Recall that a self-adjoint operator $H$ on $\ch\equiv L^2(X)$ is
affiliated to $\re$ if and only if for some $z\nin\spec(H)$ the
operator $R(z)=(H-z)^{-1}$ satisfies
 \begin{equation}\label{eq:GIR}
   \lim_{a\to0}\|(\e^{\rmi ap}-1)R(z)\|=0
   \text{ and } \lim_{a\to0}\|\e^{-\rmi aq}R(z)\e^{\rmi aq}-R(z)\|=0. 
 \end{equation}
 The first condition above is equivalent to the existence of a
 continuous function $\phi:X\to\R$ with
 $\lim_{x\to\infty}\phi(x)=+\infty$ such that $D(H)\subset D(\phi(p))$.
 For example, it suffices to have $D(H)\subset\ch^s$ for some $s>0$.
 The second condition in \eqref{eq:GIR} is a sort of regularity
 condition on the dependence on $p$ of $H$. One could check it by
 justifying the writing
 \begin{align*}
 \e^{-\rmi aq}R(z)\e^{\rmi aq}-R(z) 
 &=(\e^{-\rmi aq}H\e^{\rmi aq}-z)^{-1}-(H-z)^{-1} \\
 &=-(\e^{-\rmi aq}H\e^{\rmi aq}-z)^{-1}(\e^{-\rmi aq}H\e^{\rmi aq}-H)(H-z)^{-1}
 \end{align*}
 and imposing a condition on $\e^{-\rmi aq}H\e^{\rmi aq}-H$. In the
 simplest case $H=h(p)+v(q)$ with some real function $h$ we have
 $\e^{-\rmi aq}H\e^{\rmi aq}-H= h(p+a)-h(p)$ so one is forced to
 impose a smoothness condition on the function $h$.

From now on in this subsection we use the same notation for a function
on $X$ and for the operator of multiplication by this function in
function spaces on $X$, e.g.\ $V\equiv V(q)$. 

Let $h:X\to\R$ with $\lim_{k\to\infty}h(k)=+\infty$.  Let $m>0$ an
integer and assume that $h$ is of class $C^m$, its derivatives of
order $m$ are bounded, and $|h^{(\alpha)}(k)|\leq C(1+|h(k)|)$ if
$|\alpha|\leq m$. For example, $h$ could be a hypoelliptic polynomial,
or $h(k)=\sqrt{k^2+1}$, etc.  

Then $h(p)$ is a self-adjoint operator on $\ch$. Denote
$\cg=D(|h(p)|^{1/2})$ its form domain equipped with the graph topology
and $\cg^*$ the space adjoint to $\cg$. As usual
$\cg\subset\ch\subset\cg^*$.

Clearly $\e^{\rmi aq}\cg=\cg$ and $\e^{\rmi aq}$ extends to a bounded
operator on $\cg^*$ for which we keep the notation $\e^{\rmi aq}$.
Thus $(\e^{\rmi aq})_{a\in X}$ is a $C_0$-group in each of the spaces
$\cg,\ch,\cg^*$ and the commutator $[\e^{\rmi aq},S]$ is a well
defined element of $B(\cg,\cg^*)$ if $S\in B(\cg,\cg^*)$.

Let $W:\cg\to\cg^*$ be a symmetric operator satisfying
$W\geq-\mu h(p)-\nu$ for some real numbers $\mu,\nu$ with
$\mu<1$. Then the sum $h(p)+W$ is a well defined operator
$\cg\to\cg^*$ and the operator associated to it in $\ch$ is a
self-adjoint bounded from below operator that we denote $H_0$.  Assume
$\lim_{a\to0}\|[\e^{\rmi aq},W]\|_{\cg\to\cg^*}=0$. \emph{Then $H_0$
  is affiliated to $\re$.}

Now let $V:X\to\R$ be a locally integrable function and let $V_-$ be
its negative part. Assume that there are numbers $\mu,\nu$ with
$\mu<1$ such that $V_-\leq\mu H_0+\nu$ in form sense.

\emph{Then the self-adjoint operator $H$ associated to the form sum
  $H_0+V$ is affiliated to $\re$.}

\subsubsection{Uniformly elliptic operators}

We emphasize that the conditions on the perturbation $W$ considered
above are such that $W$ may contain terms of the same order as
$h(p)$. For example, the next fact is a consequence of the preceding
statement. Recall the notations $p_j=-\rmi\partial_j$ and
$p^\alpha=p_1^{\alpha_1}\dots p^{\alpha_d}$ for $\alpha\in\N^d$.

\emph{Let $a_{\alpha\beta}\in L^\infty(X)$ such that
  $L=\sum_{|\alpha|,|\beta|\leq m} p^\alpha a_{\alpha\beta}p^\beta$ is
  uniformly elliptic on $\ch^m$, i.e.\
  $\braket{u}{Lu}\geq \mu\|u\|^2_{\ch^m} -\nu\|u\|^2$ with
  $\mu,\nu>0$. If $V\in L^1_\loc(X)$ is positive then the self-adjoint
  operator $H$ associated to the form sum $L+V$ is affiliated to
  $\re$.}

\subsubsection{Schr\"odinger operators}

We consider now Schr\"odinger operators with singular potentials.
Note that $\Delta=p^2$ is the positive Laplacian.

\emph{Let $W$ be a continuous symmetric sesquilinear form on $\ch^1$
  such that $W\geq-\mu\Delta-\nu$ with $\mu<1$ and
  $\lim_{a\to0}\|[\e^{\rmi aq},W]\|_{\ch^1\to\ch^{-1}}=0$.  Denote
  $H_0$ the self-adjoint operator associated to the form $\Delta+W$ on
  $\ch^1$.  Let $V:X\to\R$ locally integrable such that its negative
  part is form bounded with respect to $H_0$ with relative bound
  $<1$. Then the self-adjoint operator $H$ associated to the form sum
  $H_0+V$ is affiliated to $\re$.}

These conditions are satisfied if $W=0$ and $V$ is of Kato class,
hence we see that closures are not needed in the Theorems 3.12 and 4.5
in the paper \cite{LaS} of Y.~Last and B.~Simon.

\subsubsection{Magnetic fields}

The next example involves magnetic fields and a perturbation
of the Euclidean metric. Let $L$ be the form sum 
\[
L={\textstyle\sum_{ij}}(p_i-a_i)g_{ij}(p_j-a_j)+v
\]
where $g_{ij},a_i,v$ are operators of multiplication by real functions
such that: \\[1mm]
1) $g_{ij}\in L^\infty(X)$ and the matrix $G(x)=(g_{ij}(x))$ is
uniformly positive definite, i.e.\ there is a number $\varepsilon>0$ such that
$G(x)\geq\varepsilon$ for all $x$; \\[1mm]
2) $\forall\mu>0$ $\exists\nu>0$ such that
$\sum_i\|a_iu\|^2+\braket{u}{w u} \leq \mu\|u\|^2_{\ch^1}+\nu\|u\|^2$
$\forall u\in \ch^1$, where $w$ is the negative part of $v$.\\[1mm]
\emph{Then the self-adjoint operator $H$ associated to the form sum
  $L$ is affiliated to $\re$.}

\subsubsection{Dirac operators}

The theory extends easily to operators in
$\ch=L^2(X,E)=L^2(X)\otimes E$ where $E$ is a finite dimensional
Hilbert space and covers singular Dirac operators \cite[Prop.\ 1.11
and Cor.\ 4.8]{GI4}. Now by $\re$ we understand $\re\otimes B(E)$.

Let $D$ be the Dirac operator acting in $\ch$. So $D$ is a first order
symmetric differential operator with constant coefficients realized as
a self-adjoint operator in $\ch$ with domain the Sobolev space
$\ch^{1}$. Then let $V$ be a continuous symmetric form on $\ch^{1/2}$
such that $\pm V \leq \mu|D|+\nu$ with $\mu<1$ and
$\lim_{a\to0}\|[\e^{\rmi aq},V]\|_{\ch^{1/2}\to\ch^{-1/2}}=0$.
\emph{Then the operator in $\ch$ associated to the sum
  $D+V:\ch^{1/2}\to\ch^{-1/2}$ is self-adjoint and affiliated to
  $\re$.}

\subsubsection{Stark Hamiltonian}\label{sss:stark}

There are physically interesting differential operators which are not
affiliated to $\re$. Indeed, \emph{the Stark Hamiltonian $H=p^2+q$ is
  a self-adjoint operator on $L^2(\R)$ which does not satisfy any of
  the conditions \eqref{eq:GIR}}. And we have
$\slim_{|a|\to\infty}\e^{\rmi ap}H\e^{-\rmi ap}=\infty$ and
$\slim_{|a|\to\infty}\e^{\rmi aq}H\e^{-\rmi aq}=\infty$, while
$\spe(H)=\R$.

\begin{proof}
  Note first that if $\theta\in L^\infty(\R)$ is a nonzero function
  then $\|(\e^{\rmi ap}-1)\theta(q)\|$ does not tend to zero as
  $a\to0$. Indeed, by a remark made just after \eqref{eq:GIR}, in the
  contrary case there is $\psi\in C_0(\R)$ such that
  $\theta(q)=\psi(p)T$ for some bounded operator $T$. Then for any
  $\eta\in C_0(\R)$ the operator $\eta(q)\theta(q)=\eta(q)\psi(p)T$ is
  compact, which is obviously wrong. 

  Let $R=(H+\rmi)^{-1}$ and $s=p^3/3$. Since
  $H=\e^{\rmi s}q \e^{-\rmi s}$ we have
  $R=\e^{\rmi s}(q+\rmi)^{-1} \e^{-\rmi s}$ hence
  $\|(\e^{\rmi ap}-1)R\|=\|(\e^{\rmi ap}-1)(q+\rmi)^{-1}\|$ which does
  not tend to zero as $a\to0$ by the preceding remark. Then a short
  computation gives
\[
\e^{\rmi aq}R\e^{-\rmi aq}=
\e^{\rmi s}(q-2ap+a^2+\rmi)^{-1}\e^{-\rmi s} 
\]
hence
\begin{align*}
\|\e^{\rmi aq}R\e^{-\rmi aq}-R\| &=
\|(q-2ap+a^2+\rmi)^{-1}-(q+\rmi)^{-1}\| \\
&=\|(q-2ap+\rmi)^{-1}-(q+\rmi)^{-1}\| +O(a^2).
\end{align*}
To show that this does not tend to zero as $a\to0$ it suffices to use
the estimate
\[
\|\varphi(q)+\psi(p+\alpha q)\|\geq \max\big(\sup|\varphi|,\sup|\psi|\big) 
\]
which is valid and easy to prove for any $\varphi,\psi\in
C_0(\R)$. Finally, note for example that
$\slim_{|a|\to\infty}\e^{\rmi aq}H\e^{-\rmi aq}=\infty$ means 
$\slim_{|a|\to\infty}\e^{\rmi aq}R\e^{-\rmi aq}=0$ which is an
exercise. 
\end{proof}

\subsubsection{Unbounded potentials}\label{sss:unbounded}

We give two one dimensional examples of Hamiltonians $H=p^2+v$
affiliated to $\re(\R)$ with potentials $v$ not relatively bounded
with respect to $p^2$.

The first one is such that one of its localizations at infinity is not
densely defined.  For $n>0$ integer let $v(x)=n$ if $n^2-n<x<n^2$ and
$v(x)=0$ if $n^2<x<n^2+n$. Let $v$ be an arbitrary bounded positive
function on $x<0$. Denote $R=(H+1)^{-1}$ and let $\Delta_+$ be the
Dirichlet Laplacian on $(0,\infty)$.  Then
$\ulim_{n\to\infty}\tau_{n^2}(R)=R_+$ where $R_+=(\Delta_++1)^{-1}$ on
$L^2(0,\infty)$ and $R_+=0$ on $L^2(-\infty,0)$. Thus one of the
localizations at infinity of $H$ is the Dirichlet Laplacian on
$(0,\infty)$ considered as a non densely defined self-adjoint operator
on $L^2(\R)$ which ``lives'' in the subspace $L^2(0,\infty)$.

We now give a second one dimensional example where the essential
spectrum has an interesting structure. Note that if $X=\R$ and
$\vkappa$ is an ultrafilter then either $]0,+\infty[\,\in\vkappa$ or
$]-\infty,0[\,\in\vkappa$ hence there are two contributions to the
essential spectrum of $H$, one coming from the behaviour at $+\infty$,
that we denote $\mathrm{Sp^+_{ess}}(H)$, and one coming from the
behaviour at $-\infty$, that we denote $\mathrm{Sp^-_{ess}}(H)$.

Let $H=h(p)+v(q)$, where $h:\R\to\R$ is of class $C^1$, polynomially
bounded, tends to $+\infty$ if $p\rarrow\infty$, and
$|h'(p)|\leq C(1+|h(p)|)$. The function $v$ is real, continuous, and
bounded from below.  Assume that for large positive $x$ we have
$v(x)=x^a\omega(x^\theta)$ with $a\geq0$, $0<\theta<1$ and $\omega$ a
positive continuous periodic function with period 1. Moreover, assume
that $\omega$ vanishes only at the points of $\Z$ and that there are
real numbers $\lambda,\mu>0$ such that $\omega(t)\sim \lambda|t|^\mu$
when $t\rarrow0$.  Then there are three possibilities:

{\rm (1)} If $a<\mu(1-\theta$) then
$\mathrm{Sp^+_{ess}}(H)= [\inf h,+\infty)$.

{\rm (2)} If $a=\mu(1-\theta$) then
$\mathrm{Sp^+_{ess}}(H)=\spec{}(h(p) + \lambda|\theta q|^\mu)$, a
discrete not empty set.

{\rm (3)} If $a>\mu(1-\theta$) then
$\mathrm{Sp^+_{ess}}(H)= \emptyset$.

This is a consequence of Corollary \ref{co:classic} and the proof
consists in a rather straightforward computation of the localizations
at infinity of $H$. In the case (1) these are all the operators
$h(p)+c$ with $c\geq0$ real while in case (2) they are of the form
$h(P) + \lambda|\theta q+c|^\mu$ with $c\in\R$. In the third situation
all the localizations at infinity are $\infty$ (see Proposition
\ref{pr:discrete}).

\subsubsection{Discrete spectrum}

A self-adjoint operator $H$ has purely discrete spectrum if and only
if $\spe(H)=\emptyset$.  If $H$ is affiliated to $\re$, i.e. if the
conditions of \eqref{eq:sadj} are satisfied, then due to
\eqref{eq:rb-aa} this means $H_\vkappa=\infty$ for any ultrafilter
$\vkappa$. Thus Corollary \ref{co:classic} implies: 

\begin{proposition}\label{pr:discrete}
  If $H$ is a self-adjoint operator affiliated to $\re$ then $H$ has
  purely discrete spectrum if and only if
  $\wlim_{a\to\infty}\e^{\rmi ap}R(z) \e^{-\rmi ap}=0$ for some $z$ in
  the resolvent set of $H$.
\end{proposition}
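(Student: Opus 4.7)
The plan is to reduce the claim to Corollary \ref{co:classic}. That corollary, together with the observation on non--densely defined self-adjoint limits just below it, identifies $\spe(H)=\emptyset$ with the condition that $H_\vkappa=\infty$ for every $\vkappa\in X^\dagger$, i.e.\ that $R_\vkappa(z)=\ulim_{x\to\vkappa}\e^{\rmi xp}R(z)\e^{-\rmi xp}=0$ for every $\vkappa\in X^\dagger$; by the resolvent identity this independent of $z$ in the resolvent set. So the proposition reduces to showing that the vanishing of all $R_\vkappa(z)$ (a \emph{local-norm} condition along ultrafilters) is equivalent to the weak-operator convergence $\wlim_{a\to\infty}\e^{\rmi ap}R(z)\e^{-\rmi ap}=0$.

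For the forward direction I would argue that, since $\rb_\loc$ is metrizable, the standard equivalence between ordinary convergence and convergence along every refining ultrafilter (see \S\ref{s:app}) yields $R_x(z)\to 0$ in $\rb_\loc$ as $x\to\infty$, i.e.\ $\|R_x(z)\theta(q)\|\to 0$ for every $\theta\in C_0(X)$. Combined with the uniform bound $\|R_x(z)\|=\|R(z)\|$ and the density of vectors of the form $\theta(q)v$ in $L^2(X)$, this upgrades to strong, hence weak, convergence to $0$.

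For the converse, fix any $\vkappa\in X^\dagger$. Theorem \ref{th:GIold} gives $R_x(z)\to R_\vkappa(z)$ locally in norm along $\vkappa$, which by the same density/uniform-bound argument upgrades to weak convergence along $\vkappa$ to $R_\vkappa(z)$. On the other hand, the hypothesis $\wlim_{a\to\infty}R_a(z)=0$ forces weak convergence to $0$ along every $\vkappa\in X^\dagger$. Uniqueness of weak limits then yields $R_\vkappa(z)=0$ for every $\vkappa$, and Corollary \ref{co:classic} finishes the proof. There is no real obstacle here: the only point that needs (routine) care is the upgrade from local-norm to weak-operator convergence, which is immediate from the uniform resolvent bound.
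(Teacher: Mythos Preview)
Your proof is correct and follows exactly the route the paper indicates: the text immediately preceding the proposition reduces ``purely discrete spectrum'' to $H_\vkappa=\infty$ (i.e.\ $R_\vkappa(z)=0$) for all $\vkappa\in X^\dagger$ via Corollary~\ref{co:classic}, and then states the proposition without further argument. Your write-up simply fills in the passage between ``$R_\vkappa(z)=0$ for all $\vkappa$'' and ``$\wlim_{a\to\infty}\e^{\rmi ap}R(z)\e^{-\rmi ap}=0$'' that the paper leaves to the reader, and the density/uniform-bound upgrade you use is the natural way to do it.
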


The next result is a consequence of this proposition, we refer to
\cite{G0} for the proof. See \cite{bS} for results of the same nature
obtained by other techniques. Let $B_a$ be the ball $|x-a|<1$.

\begin{corollary}\label{co:discrete}
  Let $H_0$ be a bounded from below self-adjoint operator with form
  domain equal to the Sobolev space $\ch^m$ for some real $m > 0$ and
  satisfying $\lim_{a\to0}\e^{\rmi aq}H_0\e^{-\rmi aq}= H_0$ in norm
  in $B(\ch^m,\ch^{-m})$. Let $V$ be a positive locally integrable
  function such that
  $\lim_{a\to\infty}|\{x\in B_a \mid V (x) < \lambda\}| = 0$ for each
  $\lambda> 0$. Then the self-adjoint operator $H$ associated to the
  form sum $H_0 + V$ has purely discrete spectrum.
\end{corollary}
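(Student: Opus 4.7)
The plan is to apply Proposition \ref{pr:discrete}, reducing the claim to two tasks: verifying that $H$ is affiliated to $\re$, and showing $\wlim_{a\to\infty}\e^{\rmi ap}R(z)\e^{-\rmi ap}=0$ for some $z$ in the resolvent set of $H$.

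For affiliation I would check both conditions of \eqref{eq:sadj}. Since $V\geq0$, the form domain of $H$ is contained in $\ch^m$, hence $D(H)\subset\ch^m$; as noted right after \eqref{eq:GIR}, this already implies the first condition. For the second condition, $V$ commutes with $\e^{\rmi aq}$, so $\e^{\rmi aq}H\e^{-\rmi aq}-H=\e^{\rmi aq}H_0\e^{-\rmi aq}-H_0$ tends to $0$ in $B(\ch^m,\ch^{-m})$ by hypothesis; sandwiching by $R(z)$, which is bounded from $\ch^{-m}$ to $\ch^m$, gives $\|[\e^{\rmi aq},R(z)]\|\to0$.

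For the weak limit, fix $\lambda$ large enough that $H+\lambda\geq 1$ and write $H_a\doteq\e^{\rmi ap}H\e^{-\rmi ap}$. In the form sense $H_a=H_0^{(a)}+V(q+a)$, with $H_0^{(a)}\doteq\e^{\rmi ap}H_0\e^{-\rmi ap}$ uniformly bounded from below on $\ch^m$ (translation is unitary on $\ch^m$). For $u\in C_\rmc(X)$ set $v_a\doteq(H_a+\lambda)^{-1}u$. Testing $(H_a+\lambda)v_a=u$ against $v_a$ and using $V\geq 0$ yields uniform bounds
\[
\|v_a\|_{\ch^m}\leq C\|u\|\qquad\text{and}\qquad\int V(x+a)|v_a(x)|^2\,\d x\leq C\|u\|^2.
\]
Fix a ball $B_R$ and $\lambda'>0$, and write $B_R=E_a\cup(B_R\setminus E_a)$ with $E_a\doteq\{x\in B_R\mid V(x+a)<\lambda'\}$. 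The complement contributes at most $C\|u\|^2/\lambda'$ to $\|1_{B_R}v_a\|^2$. On $E_a$, Sobolev embedding $\ch^m\hookrightarrow L^p_\loc$ for some $p>2$ and Hölder's inequality yield a bound $\|v_a\|_{L^p(B_R)}^2\,|E_a|^{1-2/p}$; covering the translated ball $B_R+a$ by finitely many unit balls $B_{a_i}$ and applying the hypothesis on $V$ to each gives $|E_a|\to 0$ as $a\to\infty$. Sending $a\to\infty$ and then $\lambda'\to\infty$ yields $\|1_{B_R}v_a\|\to 0$ for every $R$, so $v_a\rightharpoonup 0$ in $L^2$. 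The uniform bound $\|(H_a+\lambda)^{-1}\|\leq 1$ together with density of $C_\rmc(X)$ upgrades this to weak operator convergence of $(H_a+\lambda)^{-1}$ to $0$, which is exactly the criterion of Proposition \ref{pr:discrete}.

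The main obstacle is the Sobolev upgrade: one must pass from the uniform $\ch^m$ bound on $v_a$ to a uniform local $L^p$ bound with $p>2$. This is straightforward when $m>d/2$ (take $p=\infty$) but for $m\leq d/2$ one has to use the subcritical Sobolev exponent, and the Hölder exponent $1-2/p$ then depends on $d$ and $m$. A secondary but routine step is the finite-covering argument translating the hypothesis on $V$, which is stated only for unit balls $B_a$, into quantitative control on balls $B_R(a)$ of arbitrary fixed radius $R$.
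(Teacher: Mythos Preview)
The paper does not give its own proof of this corollary; it merely states that the result is a consequence of Proposition~\ref{pr:discrete} and refers to \cite{G0} for the details. Your proposal follows exactly the route the paper indicates, and the argument is correct.

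A few remarks on the points you flag as obstacles. The affiliation step is clean: since $V\geq0$, the form domain of $H$ is $\ch^m\cap Q(V)\subset\ch^m$, and the coercivity $H+\lambda\geq c\,\jap{p}^{2m}$ for large $\lambda$ shows that $R(-\lambda)$ extends boundedly from $\ch^{-m}$ to $\ch^m$, which is what your sandwiching uses. The ``Sobolev upgrade'' you worry about is not a genuine obstacle: for \emph{every} $m>0$ the local embedding $\ch^m\hookrightarrow L^p(B_R)$ holds for some $p>2$ (take $p=2d/(d-2m)$ if $m<d/2$, any finite $p$ if $m=d/2$, $p=\infty$ if $m>d/2$), with a constant depending only on $R$; since your $\ch^m$ bound on $v_a$ is global and uniform in $a$, the restriction to $B_R$ is automatically controlled. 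The finite-covering step reducing balls of radius $R$ to unit balls is indeed routine. So the proof goes through in all cases without the case distinction causing any difficulty.
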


\subsection{N-body Hamiltonians}\label{ss:hom}

The $C^*$-algebra involved in the preceding subsection was the maximal
one $\ca=C_\rmb^\rmu(X)$. We will consider now an algebra $\ca=\cR(X)$
introduced in \cite{GN} (with a different notation) which has an
interesting structure and character space so its study is worthwhile
independently of the quantum mechanical applications mentioned in
\cite{GN}. Theorem \ref{th:GIA} will allow us to solve a problem left
open in \cite{GN}.

The terminology ``N-body'' is misleading in a certain measure, for
example N has no significance, it should only suggest that the
operators affiliated to the algebra $\cR(X)\rtimes X$ include the
Hamiltonians describing quantum $N$-particle systems. But in fact the
usual such Hamiltonians are affiliated to a smaller algebra studied in
\cite[\S4]{DG} (and previous articles quoted there) with \cite[Th.\
6.27]{GI4} as analog of Theorem \ref{th:GIH}; note that the $N$-body
Hamiltonians with hard-core interactions studied in \cite{BGS} are
also affiliated to this algebra.  But the $N$-body type Hamiltonians
affiliated to the algebra $\cR(X)\rtimes X$ are much more general
since the allowed interactions are only required to be asymptotically
homogeneous.

Let $E$ be a finite dimensional real vector space. The half-line
determined by a vector $a\neq0$ in $E$ is the set
$\hat{a}=\{ra \mid r>0\}$, the sphere at infinity $\mbs_E$ of $E$ is
the set of all half-lines in $E$, and as a set the \emph{spherical
  compactification} of $E$ is the disjoint union
$\bar{E}=E\cup\mbs_E$.  This space has a natural compact space
topology \cite[\S3]{GN} such that $E\hookrightarrow\bar{E}$ as an open
dense subset hence $C(\bar{E})$ can be identified with an algebra of
continuous functions on $E$. More precisely, by restricting functions
on $\bar{E}$ to $E$ one may realize $C(\bar{E})$ as a
\emph{translation invariant} $C^*$-subalgebra of $C_\rmb^\rmu(E)$
which contains $C_\infty(E)$.

The simplest example of this type is $\ca=C(\ol{\R})$ the algebra of
continuous functions on $\R$ that have limits at $\pm\infty$. Then
$\ca^\dagger=\{-\infty,+\infty\}$ consists of just two points, for any
$A\in\ra$ the limits
$A_{\pm}=\slim_{x\to\pm\infty}\e^{\rmi xp}A \e^{-\rmi xp}$
exist, and $\spe(A)=\spec(A_-)\cup\spec(A_+)$. This is easy to prove
directly, without any formalism.

The higher dimensional analog $\ca=C(\oX)$ is less trivial.  Now
$\ca^\dagger=\mbs_X$ and for any $A\in\ra$ and any
$a\in\alpha\in\mbs_X$ the limit
$A_{\alpha}=\slim_{r\to\infty}\e^{\rmi rap}A \e^{-\rmi rap}$ exists
(and is clearly independent of the choice of $a$).  It has been shown
in \cite[Cor.\ 4.5]{GN} that if $A$ is a normal operator then
$\spe(A)=\cup_{\alpha\in\mbs_X}\spec(A_\alpha)$, in particular the
union is a closed set, but the techniques used in \cite{GN} are not
applicable to not normal operators.  We shall give now a complete
treatment of an N-body type generalization of this example.

Let $X$ be a real finite dimensional vector space (it is not
convenient to identify it with $\R^d$). If $Y\subset X$ is a linear
subspace and if we denote $\pi_Y:X\to X/Y$ the canonical surjection,
then the map $\phi\mapsto\phi\circ \pi_Y$ gives a natural embedding
$C_\rmb^\rmu(X/Y)\subset C_\rmb^\rmu(X)$ so we may, and we shall,
think of $C_\rmb^\rmu(X/Y)$ as a $C^*$-subalgebra of
$C_\rmb^\rmu(X)$. On the other hand, $C(\oXY)$ is a $C^*$-subalgebra
of $C_\rmb^\rmu(X/Y)$, so we obtain a realization of $C(\oXY)$ as a
$C^*$-subalgebra of $C_\rmb^\rmu(X)$. Finally, we define the algebra
$\ca$ of interest in this example:
\begin{equation}\label{eq:crx}
\cR(X)\doteq
\begin{cases}
\text{ $C^*$-algebra generated by the subalgebras } C(\oXY) \\
\text{ when } Y \text{ runs over the set of all subspaces of } X. 
\end{cases}
\end{equation}
It is easy to see that
$C_\infty(X)\subset\cR(X)\subset C_\rmb^\rmu(X)$ and that $\cR(X)$ is
stable under translations, so we may take $\ca=\cR(X)$ in Theorem
\ref{th:GIA}. The corresponding $\ra$ is the crossed product
$\rr(X)\doteq\cR(X)\rtimes X$ identified with the closed linear
subspace of\,%
\footnote{ We refer to \cite{GN} for details. Note however that the
  $C^*$-algebras $\rb(X)$, $\rk(X)$ do not depend on the choice of a
  Lebesgue measure on $X$. We denote $X^*$ the vector space dual to
  $X$ }
$\rb(X)$ generated by the operators $\varphi(q)\psi(p)$ with
$\varphi\in\cR(X)$ and $\psi\in C_0(X^*)$.

We shall keep the notation $\e^{\rmi ap}$ for the translation operator
by $a\in X$ although we do not give a meaning to the symbol $p$. This
operator acts as before: $(\e^{\rmi ap}u)(x)=u(a+x)$. To be precise,
$r\to\infty$ means $r\to+\infty$.

\begin{theorem}\label{th:GN}
  If $A\in\rr(X)$ and $\alpha\in\mbs_X$ then
  $\ulim_{r\to\infty}\e^{\rmi rap}A \e^{-\rmi rap}\doteq A_\alpha$
  exists for each $a\in\alpha$ and is independent of $a$.  We have
  $\spe (A)= \ccup_{\alpha\in\mbs_X} \spec(A_\alpha)$.
\end{theorem}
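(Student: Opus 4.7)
The plan is to apply Theorem~\ref{th:GIA} with $\ca = \cR(X)$ and then use Remark~\ref{re:restrict} to shrink the resulting union from $\cR(X)^\dagger$ to $\mbs_X$. First I construct the characters $\chi_\alpha$ producing the $A_\alpha$: for $\alpha \in \mbs_X$ and $a \in \alpha$, the formula $\chi_\alpha(\varphi) := \lim_{r\to\infty}\varphi(ra)$ defines a character of $\cR(X)$ at infinity independent of $a$. Existence on the generating subalgebras $C(\oXY)$ is immediate: writing $\varphi = \tilde\varphi \circ \pi_Y$, either $a \in Y$ and $\varphi(ra) \equiv \tilde\varphi(0)$, or $a \notin Y$ and $r\pi_Y(a) \to \widehat{\pi_Y(a)}$ in $\oXY$. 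Since $ra \to \chi_\alpha$ in $\sigma(\cR(X))$, Theorem~\ref{th:GIold} yields $A_\alpha = \tau_{\chi_\alpha}(A) = \ulim_{r\to\infty}\e^{\rmi rap}A\e^{-\rmi rap}$, independent of the choice of $a \in \alpha$.

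\textbf{Reduction to $\mbs_X$.} Theorem~\ref{th:GIA} gives $\spe(A) = \ccup_{\vkappa\in\cR(X)^\dagger}\spec(A_\vkappa)$, so it is enough to show that for every $\vkappa \in \cR(X)^\dagger$ there is $\alpha \in \mbs_X$ with $\spec(A_\vkappa) \subset \spec(A_\alpha)$. The restriction of $\vkappa$ to $C(\oX) \subset \cR(X)$ vanishes on $C_0(X)$, hence corresponds to a point $\alpha \in \oX \setminus X = \mbs_X$. Fix $a \in \alpha$ and represent $\vkappa$ by an ultrafilter $\cu$ on $X$ (cf.\ Remark~\ref{re:cbu}); then $|x|\to\infty$ and $x/|x| \to a/|a|$ along $\cu$. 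I apply Remark~\ref{re:restrict} with $\omega = \chi_\alpha$ and $\eta$ defined as $\tau_\vkappa$ restricted to the image of $\tau_{\chi_\alpha}$, which is the $C^*$-subalgebra of $C_\rmb^\rmu(X)$ generated by those $C(\oXY)$ with $\R a \subset Y$. The required identity $\tau_\vkappa = \eta \circ \tau_{\chi_\alpha}$ is verified on each generator $\varphi \in C(\oXY)$: if $\R a \subset Y$ it is trivial since $\tau_{\chi_\alpha}(\varphi) = \varphi$; if $\R a \not\subset Y$, then $\tau_{\chi_\alpha}(\varphi)$ is the constant $\tilde\varphi(\widehat{\pi_Y(a)})$, while the directional behaviour of $\cu$ (together with $\pi_Y(a)\neq 0$) forces $\pi_Y(y) \to \widehat{\pi_Y(a)}$ in $\oXY$ along $\cu$, so $\tau_\vkappa(\varphi)(x) = \lim_\cu \tilde\varphi(\pi_Y(x)+\pi_Y(y)) = \tilde\varphi(\widehat{\pi_Y(a)})$ as well, bounded shifts having no effect on half-lines at infinity.

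\textbf{Main obstacle.} The delicate step is the factorization above. It rests on the observation that on the \emph{directionally visible} generators $C(\oXY)$ with $\pi_Y(a) \neq 0$ the character $\vkappa$ is forced to agree with $\chi_\alpha$ purely because $x/|x| \to a/|a|$ along $\cu$; only on the \emph{transverse} generators with $\R a \subset Y$ does $\vkappa$ carry residual information, and these are precisely the subalgebras on which $\tau_{\chi_\alpha}$ acts as the identity, so the residue is correctly recorded by $\eta$. Remark~\ref{re:restrict} then yields $\spec(A_\vkappa) = \spec(\eta(A_\alpha)) \subset \spec(A_\alpha)$, completing the reduction to $\mbs_X$.
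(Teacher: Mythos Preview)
Your argument is correct and follows the same overall plan as the paper: apply Theorem~\ref{th:GIA} with $\ca=\cR(X)$, associate to each $\vkappa\in\cR(X)^\dagger$ the half-line $\alpha_\vkappa$ obtained by restricting $\vkappa$ to $C(\oX)$, and then factor $\tau_\vkappa$ through $\tau_{\alpha_\vkappa}$ so that Remark~\ref{re:restrict} reduces the union to $\mbs_X$.

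The difference lies only in how the factorization $\tau_\vkappa=\eta\circ\tau_{\chi_\alpha}$ is established. The paper works intrinsically: it quotes structural results from \cite{GN} (in particular Cor.~6.8 and Lem.~6.7 there) to prove $\vkappa=\what\vkappa\,\tau_{\alpha_\vkappa}$ at the level of characters and then deduces $\tau_\vkappa=\tau_{\what\vkappa}\tau_{\alpha_\vkappa}$ with $\tau_{\what\vkappa}$ the translation morphism of the smaller algebra $\cR(X/\alpha_\vkappa)$ (Lemma~\ref{lm:vchar}). You instead choose a norm on $X$, lift $\vkappa$ to an ultrafilter $\cu$, and verify the factorization by hand on each generating subalgebra $C(\oXY)$, using the observation that $x/|x|\to a/|a|$ along $\cu$ forces $\pi_Y(x)\to\widehat{\pi_Y(a)}$ whenever $\pi_Y(a)\neq0$. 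Your route is more self-contained (it avoids the external citations to \cite{GN}) at the cost of introducing an auxiliary Euclidean structure that the paper deliberately avoids; the paper's route is coordinate-free and simultaneously yields the finer description \eqref{eq:vchar} of $\cR(X)^\dagger$, which your argument does not need and does not produce.
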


\begin{corollary}\label{co:GN}
  If $H$ is a self-adjoint operator affiliated to $\rr(X)$ then for
  any $\alpha\in\mbs_X$ the limit
  $H_\alpha\doteq\ulim_{r\to\infty}\e^{\rmi rap}H \e^{-\rmi rap}$
  exists for each $a\in\alpha$ and is independent of $a$.  We have
  $\spe (H)= \ccup_{\alpha\in\mbs_X} \spec(H_\alpha)$.
\end{corollary}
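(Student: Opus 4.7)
The corollary is the self-adjoint unbounded counterpart of Theorem \ref{th:GN}, obtained from the latter in the same way Theorem \ref{th:GIH} is obtained from Theorem \ref{th:GIA}. The strategy is to transfer every assertion about $H$ into one about its resolvent $R(z)=(H-z)^{-1}\in\rr(X)$, to invoke Theorem \ref{th:GN} for this bounded operator, and then to translate back via the observable/self-adjoint resolvent dictionary of \S\ref{ss:unbounded}.

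First I fix $z$ in the resolvent set of $H$; affiliation of $H$ to $\rr(X)$ gives $R(z)\in\rr(X)$. Applying Theorem \ref{th:GN} to $R(z)$ produces, for each $\alpha\in\mbs_X$ and each $a\in\alpha$, the local norm limit
\[
R_\alpha(z)\doteq\ulim_{r\to\infty}\e^{\rmi rap}R(z)\e^{-\rmi rap}\in\re,
\]
independent of the choice of $a\in\alpha$ (a change of representative $a\mapsto ta$ with $t>0$ merely reparametrizes $r\to\infty$). Taking the local norm limit in the first resolvent identity for $R$---legitimate because on norm-bounded subsets of $\rb$ composition and addition are jointly continuous in the local norm topology---shows that $z\mapsto R_\alpha(z)$ is a pseudo-resolvent on $L^2(X)$. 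Since $A\mapsto\e^{\rmi rap}A\e^{-\rmi rap}$ is a $*$-automorphism and $R(\bar z)=R(z)^*$, the sequence whose limit defines $R_\alpha(\bar z)$ is the adjoint of the one defining $R_\alpha(z)$, so $R_\alpha(\bar z)=R_\alpha(z)^*$ and $R_\alpha$ is a self-adjoint resolvent. Every self-adjoint resolvent is regular in the sense of \S\ref{sss:regular}, hence is the resolvent of a unique observable $H_\alpha$ on $L^2(X)$; this supplies the precise meaning of $H_\alpha=\ulim_{r\to\infty}\e^{\rmi rap}H\e^{-\rmi rap}$.

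Second, I apply the spectrum identity of Theorem \ref{th:GN} to the bounded element $R(z)\in\rr(X)$ to obtain $\spe(R(z))=\ccup_{\alpha\in\mbs_X}\spec(R_\alpha(z))$. Lemma \ref{lm:assoc} identifies $\spe(R)$ with $\spe(H)$ and each $\spec(R_\alpha)$ with $\spec(H_\alpha)$, while the spectral mapping theorem in the form \eqref{eq:smH} and \eqref{eq:smtr} converts the identity at the level of resolvents into the corresponding identity at the level of operators. This yields $\spe(H)=\ccup_{\alpha\in\mbs_X}\spec(H_\alpha)$, exactly as in the passage from Theorem \ref{th:GIU} to \eqref{eq:GIUH}.

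The main potential obstacle is only bookkeeping: checking that the local norm limit is compatible with the involution and the resolvent identity. The involution point is immediate because the two relevant sequences, for $z$ and for $\bar z$, are mutual adjoints in $\rr(X)$ and both converge by Theorem \ref{th:GN}; the resolvent identity passes to the limit since the products involved are uniformly bounded in operator norm. No genuinely new analytic input beyond Theorem \ref{th:GN} and the formalism of \S\ref{ss:unbounded} is required.
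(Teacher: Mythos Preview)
Your proposal is correct and follows exactly the route the paper intends: the corollary is not given a separate proof in the paper but is deduced from Theorem~\ref{th:GN} via the self-adjoint resolvent formalism of \S\ref{ss:unbounded}, precisely as Theorem~\ref{th:GIU} leads to \eqref{eq:GIUH}. One small refinement: rather than arguing joint continuity of composition in the local norm topology on bounded subsets of $\rb$ (which is delicate in general), it is cleaner to note, as in the proof of Theorem~\ref{th:GIU}, that $R_\alpha=\tau_\alpha\circ R$ with $\tau_\alpha$ a $C^*$-morphism, so the resolvent identity and the relation $R_\alpha(\bar z)=R_\alpha(z)^*$ are automatic.
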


The first part of the theorem is a consequence of \cite[Th.\ 6.18]{GN}
and the corollary is an improvement of \cite[Th.\ 6.21]{GN}, where the
formula for the spectrum is shown with the union replaced by the
closure of the union. This question is discussed in \cite{GN} after
Theorem 6.21 and Lemma 6.22 there gives a hint about the difficulty of
the problem. On the other hand, the question of non self-adjoint $A$
is not at all discussed in \cite{GN} because the relation (6.20) from
\cite{GN} gives nothing for such operators.

Let us mention a recent result from \cite{MNP} which is related to
Corollary \ref{co:GN} but is proved using the different ideas
introduced in \cite{NP}.  Consider a finite set $\cl$ of linear
subspaces of $X$ which is stable under intersections and contains
$\{0\}$ and $X$. Let us denote $\cR_\cl(X)$ the $C^*$-subalgebra of
$\cR(X)$ generated by the $C(\oXY)$ with $Y\in\cl$. Then $\cR_\cl(X)$
is stable under translations so the crossed product
$\rr_\cl(X)=\cR_\cl(X)\rtimes X$ is a well defined $C^*$-subalgebra of
$\rr(X)$. Then Theorem 1.1 from \cite{MNP} says that the assertion of
Corollary \ref{co:GN} holds for the self-adjoint operators $H$
affiliated to $\rr_\cl(X)$.

The rest of this subsection is devoted to the proof of the last
assertion of Theorem \ref{th:GN}. The main ingredient of the proof is
a sufficiently explicit description of the character space
$\cR(X)^\dagger=\sigma(\cR(X))\setminus X$. In \cite{GN} one may find
a quite detailed but rather involved description of $\sigma(\cR(X))$;
here we will use a simpler slightly different construction.   

The character space of the algebra $C(\oX)$ is $\oX$ and each
$\alpha\in\mbs_X$ defines a character of $C(\oX)$, that we also denote
by $\alpha$, namely $\alpha(u)=u(\alpha)=\lim_{r\to\infty}u(ra)$ if
$a\in\alpha$. In fact $\alpha$ \emph{extends naturally to a character
  of} $\cR(X)$:
\begin{equation}\label{eq:charalpha}
\alpha(u)\doteq\lim_{r\to\infty}u(ra) \quad\forall u\in\cR(X). 
\end{equation}
Indeed, it suffices to prove that the limit above exists if
$u\in C(\oXY)$ for any subspace $Y$. But this is clear: since
$u\equiv u\circ\pi_Y$, if $\alpha\subset Y$ then $u(ra)=u(0)$ and if
$\alpha\not\subset Y$ then $u(ra)=u(r\pi_Y(a))$ hence if we set
$\beta=\pi_Y(\alpha)\in\mbs_{X/Y}$ we get
$\lim_{r\to\infty}u(ra)=\beta(u)$. 

Thus we obviously get a canonical embedding
$\mbs_X\subset\cR(X)^\dagger$. Now let us compute the translation
morphism $\tau_\alpha$ associated to $\alpha\in\mbs_X$. From
\eqref{eq:mtrans} we get
\[
\tau_\alpha(u)(x)=\alpha(\tau_x(u))=\lim_{r\to\infty}\tau_x(u)(ra)
=\lim_{r\to\infty}u(x+ra)  
\]
hence we see that for each $u\in\cR(X)$ and $x\in X$ the limit
\begin{equation}\label{eq:talpha}
\tau_\alpha(u)(x)\doteq \lim_{r\to\infty}u(ra+x)
\quad\text{exists and is independent of } a
\end{equation}
which is the explicit expression for the translation morphism
associated to $\alpha$. 

To better understand the action of $\tau_\alpha$ we have to recall
more of the formalism introduced in \cite{GN}.  If $Y\subset X$ is a
linear subspace then the $C^*$-algebra $\cR(X/Y)$ associated to the
vector space $X/Y$ is well defined and, by using the embedding
$C_\rmb^\rmu(X/Y)\subset C_\rmb^\rmu(X)$, one may identify it with a
subalgebra of $\cR(X)$:
\begin{equation}\label{eq:crxy}
\cR(X/Y)=
\text{ $C^*$-algebra generated by the subalgebras } C(\oXZ)
\text{ with } Z\supset Y . 
\end{equation}
Let $[\alpha]$ be the one dimensional subspace generated by $\alpha$
and set $X/\alpha=X/[\alpha]$ to simplify the writing.  Then \emph{the
  function $\tau_\alpha(u)$ belongs to the subalgebra $\cR(X/\alpha)$
and}
\begin{equation}\label{eq:talp}
\text{\emph{the map }} \tau_\alpha:\cR(X)\to\cR(X/\alpha) 
\text{ \emph{is a surjective morphism and a projection}}.
\end{equation}
Thus the translation morphism $\tau_\alpha$ also has the property
$\tau_\alpha^2=\tau_\alpha$, more precisely $\tau_\alpha(u)=u$ if
and only if $ u\in\cR(X/\alpha)$.
Recall that the operation $\tau_\alpha(u)$ on the function $u$ should
be thought as a translation by the point at infinity
$\alpha\in\mbs_X$.  On the other hand, the operator $\tau_a$ of
translation by some $a\in X$ is an automorphism of $\cR(X)$ but never
a projection.

\begin{lemma}\label{lm:vchar}
  Let $\vkappa$ be a character in $\cR(X)^\dagger$.
\begin{compactenum}
\item There is a unique $\alpha\in\mbs_X$ such that
  $\vkappa|C(\oX)=\alpha$.  This $\alpha$ will be denoted
  $\alpha_\vkappa$.

\smallskip
\item 
The map $\cR(X)^\dagger\ni\vkappa\mapsto\alpha_\vkappa\in\mbs_X$
  is surjective.

\smallskip
\item Denote $\what\vkappa\in\sigma(\cR(X/\alpha_\vkappa))$ the
  restriction of $\vkappa$ to $\cR(X/\alpha_\vkappa)$. Then
  $\vkappa=\what\vkappa\tau_{\alpha_\vkappa}$.

\smallskip
\item Let $Y\subset X$ with
  $\alpha_\vkappa\not\subset Y$ and
  $\beta_\vkappa\doteq\pi_Y(\alpha_\vkappa)\in\mbs_{X/Y}$. Then
  $\vkappa|C(\oXY)=\beta_\vkappa$.

\smallskip
\item We have $\tau_\vkappa\tau_{\alpha_\vkappa}=\tau_\vkappa$ and
  $\tau_\vkappa=\tau_{\what\vkappa}\tau_{\alpha_\vkappa}$. 
\end{compactenum}
\end{lemma}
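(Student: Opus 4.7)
The plan is to handle the five parts in the order (i), (ii), (iv), (iii), (v), since (iii) and (v) rest on (iv) and the latter carries all the real content. For (i) and (ii) I would invoke only Gelfand duality: since $C(\oX) \subset \cR(X)$ (take $Y=\{0\}$ in \eqref{eq:crx}), the restriction $\vkappa|C(\oX)$ is a character and corresponds to a unique point of $\sigma(C(\oX)) = \oX = X \sqcup \mbs_X$; this point cannot lie in $X$ because $\vkappa$ kills $C_0(X) \subset C(\oX)$ whereas evaluation at any $x \in X$ does not. Conversely, the character of $\cR(X)$ defined by \eqref{eq:charalpha} for a given $\alpha \in \mbs_X$ vanishes on $C_0(X)$ and restricts to $\alpha$ on $C(\oX)$, which yields surjectivity.

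For (iv), with $\alpha = \alpha_\vkappa$ and $\beta = \pi_Y(\alpha)$, I would reduce to showing $\vkappa(v) = 0$ for $v = u - \beta(u)\cdot 1 \in C(\oXY)$, which vanishes at $\beta$. Since $\pi_Y(\alpha) \ne 0$, any sufficiently narrow open cone around $\alpha$ intersected with $\{|x|>R\}$ is pushed by $\pi_Y$ into an arbitrarily small neighbourhood of $\beta$ in $\oXY$, so for any $\varepsilon>0$ I can arrange $|v| < \varepsilon$ on such a region. I then pick a conic cutoff $\chi \in C(\oX)$ with $\chi(\alpha)=1$ whose support at infinity lies in the chosen cone, together with $\phi \in C_\rmc(X)$ equal to $1$ on $\{|x|\le R\}$. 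Decomposing
\[
\chi v = \chi v \, \phi + \chi v \, (1-\phi),
\]
the first summand lies in $C_\rmc(X) \subset C_0(X)$ so is annihilated by $\vkappa$, while the second has sup norm at most $\varepsilon$. Since $\vkappa(\chi v) = \vkappa(\chi)\vkappa(v) = \vkappa(v)$ (using $\vkappa(\chi)=\alpha(\chi)=1$), letting $\varepsilon \to 0$ would give (iv).

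Parts (iii) and (v) then follow formally. For (iii), I would check $\vkappa = \what\vkappa \, \tau_\alpha$ on the generators $u \in C(\oXZ)$: when $[\alpha] \subset Z$, \eqref{eq:crxy} places $u$ in $\cR(X/\alpha)$ and \eqref{eq:talpha} gives $\tau_\alpha(u) = u$, so both sides equal $\vkappa(u)$; when $[\alpha] \not\subset Z$, the same formula shows $\tau_\alpha(u) = \pi_Z(\alpha)(u)\cdot 1$, and part (iv) gives $\vkappa(u) = \pi_Z(\alpha)(u)$ as well. For (v), combining (iii) with the projection property $\tau_\alpha^2 = \tau_\alpha$ from \eqref{eq:talp} yields $\vkappa\,\tau_\alpha = \vkappa$. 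The commutation $\tau_x\,\tau_\alpha = \tau_\alpha\,\tau_x$ for $x \in X$ is immediate from \eqref{eq:talpha}, and substituting into $\tau_\vkappa(u)(x) = \vkappa(\tau_x u)$ gives both $\tau_\vkappa\,\tau_\alpha = \tau_\vkappa$ and $\tau_\vkappa = \tau_{\what\vkappa}\,\tau_\alpha$ (the latter modulo the canonical inclusion $\cR(X/\alpha) \hookrightarrow C_\rmb^\rmu(X)$ via pullback along $\pi_{[\alpha]}$).

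The main obstacle is (iv): a function in $C(\oXY)$ does not in general extend continuously to $\oX$, so there is no direct way to transport information from $\vkappa|C(\oX) = \alpha$ to $\vkappa|C(\oXY)$. The conic cutoff localises the question to a region where $\pi_Y$ behaves well, at the price of introducing a compactly supported remainder that is harmless because $\vkappa \in \cR(X)^\dagger$ kills $C_0(X)$.
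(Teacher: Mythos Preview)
Your argument is correct. Parts (i), (ii) and (v) match the paper's proof closely, but for (iii) and (iv) you take a genuinely different route: the paper does not argue directly but quotes \cite[Lem.~6.7]{GN} for (iv) and \cite[Cor.~6.8]{GN} for (iii), the latter being a uniqueness statement to the effect that a character of $\cR(X)$ is determined by its restrictions to $C(\oX)$ and to $\cR(X/\alpha)$. Your treatment of (iv) via a conic cutoff $\chi\in C(\oX)$ together with a compactly supported $\phi$ is a clean, self-contained substitute for the cited lemma; the only point that deserves a word of care is that ``support at infinity in the cone'' should be read as $\mathrm{supp}(\chi|_X)\subset C_\delta$ (e.g.\ take $\chi(x)=\psi(x/|x|)\rho(|x|)$ with $\psi$ a bump near $a/|a|$ and $\rho$ vanishing near $0$), so that $\chi v(1-\phi)$ is genuinely supported where $|v|<\varepsilon$. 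Your proof of (iii) by checking the two characters on each generating subalgebra $C(\oXZ)$---splitting into $[\alpha]\subset Z$ and $[\alpha]\not\subset Z$ and invoking (iv) in the second case---is more elementary than the paper's citation and makes transparent why (iv) is the load-bearing step. The trade-off is length: the paper's proof is shorter by outsourcing to \cite{GN}, while yours is self-contained and clarifies exactly where the geometry of the spherical compactification enters.
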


\begin{proof}
  To prove (1), note that the restriction of $\vkappa$ to $C(\oX)$ is
  a character of $C(\oX)$ hence it must be the evaluation at a
  uniquely determined point of $\oX$. If this point is some $a\in X$,
  and since $C_0(X)$ is an ideal in $\cR(X)$, then we will have
  $\vkappa=\chi_a$ which is not possible because $\vkappa\nin X$.
  Hence the point belongs to $\mbs_X$. On the other hand, any point of
  $\mbs_X$ can be obtained in this way because each character of
  $C(\oX)$ is the restriction of a character of $\cR(X)$; this proves
  (2). Now let us prove (3). Note first that for any $\alpha\in\mbs_X$
  the restriction $\tau_\alpha|C(\oX)$ is just the character $\alpha$
  of $C(\oX)$. Then $\vkappa$ and $\what\vkappa\tau_{\alpha_\vkappa}$
  are characters of $\cR(X)$ whose restrictions to $C(\oX)$ and
  $\cR(X/\alpha)$ are $\alpha$ and $\what\vkappa$, so are equal, hence
  $\vkappa=\what\vkappa\tau_{\alpha_\vkappa}$ as a consequence of
  \cite[Cor.\ 6.8]{GN}.  The assertion (4) follows immediately from
  \cite[Lem.\ 6.7]{GN}.

  Finally, we prove (5). To simplify notations we write
  $\alpha\equiv\alpha_\vkappa$ and first prove
  $\tau_\vkappa\tau_{\alpha}=\tau_\vkappa$. The equality holds on
  $\cR(X/\alpha)$ because $\tau_\alpha$ is the identity on this
  subalgebra. Thus it remains to show that $\tau_\vkappa\tau_{\alpha}$
  and $\tau_\vkappa$ are equal on any $C(\oXY)$ if
  $\alpha\not\subset Y$. We saw before that
  $\tau_\alpha(u)=u(\pi_Y(\alpha))$ if $u\in C(\oXY)$ hence
  $\tau_\alpha|C(\oXY)$ is the character $\beta$ associated to the
  point $\beta=\pi_Y(\alpha)\in\mbs_{X/Y}$. Since $\tau_\vkappa$ is a
  unital morphism we see that
  $\tau_\vkappa\tau_{\alpha}|C(\oXY)=\beta$. It remains to show that
  we also have $\tau_\vkappa|C(\oXY)=\beta$. But the definition from
  \S\ref{ss:alg} gives
  $\tau_\vkappa(u)(x)=\vkappa(\tau_x(u))=(\tau_x(u))(\beta) =u(\beta)$
  where we also used the point (4) above; this proves the assertion.

  It remains to prove the relation
  $\tau_\vkappa=\tau_{\what\vkappa}\tau_{\alpha}$.  Here
  $\tau_{\what\vkappa}$ is the endomorphism of the algebra
  $\cR(X/\alpha)$ associated to the character $\what\vkappa$ by the
  rule $\tau_{\what\vkappa}(v)(y)=\what\vkappa(\tau_y(v))$ for
  $v\in\cR(X/\alpha)$ and $y\in X/\alpha$, cf.\
  \S\ref{ss:alg}. Since $\what\vkappa$ is a restriction of
  $\vkappa$ we get $\tau_{\what\vkappa}(v)(y)=\vkappa(\tau_y(v))$. Let
  us denote $\pi_\alpha$ the canonical surjection $X\to X/\alpha$ and
  recall that we decided to identify a function $w$ on $X/\alpha$ with
  the function $w\circ\pi_\alpha$ on $X$. Then the preceding identity
  can be written $\tau_{\what\vkappa}(v)(x)=\vkappa(\tau_x(v))$ for
  all $x\in X$ (in fact $y=\pi_\alpha(x)$ and $\tau_y(v)=\tau_x(v)$).
  If $u\in\cR(X)$ then $\tau_{\alpha}(u)\in\cR(X/\alpha)$ hence we
  obtain
  $\tau_{\what\vkappa}(\tau_{\alpha}(u))(x)=\vkappa(\tau_x(\tau_{\alpha}(u)))$
  for all $x\in X$.  But
  $\vkappa(\tau_x(\tau_{\alpha}(u)))=\tau_\vkappa(\tau_{\alpha}(u))(x)$
  so we get
  $\tau_{\what\vkappa}(\tau_{\alpha}(u))(x)=\tau_\vkappa(\tau_{\alpha}(u))(x)$
  for all $x\in X$ and $u\in\cR(X)$ so that
  $\tau_{\what\vkappa}\tau_{\alpha}=\tau_\vkappa\tau_{\alpha}=\tau_\vkappa$.
\end{proof}

\begin{corollary}\label{co:vchar}
  The map $\vkappa\mapsto(\alpha_\vkappa,\what\vkappa)$ induces a
  bijection of $\cR(X)^\dagger$ onto the disjoint union of the
  character spaces $\sigma(\cR(X/\alpha))$, where $\alpha$ runs over
  $\mbs_X$. The inverse map is $(\alpha,\chi)\mapsto\chi\tau_\alpha$.
  Thus we may identify
\begin{equation}\label{eq:vchar}
\cR(X)^\dagger \simeq\amalg_{\alpha\in\mbs_X}\sigma(\cR(X/\alpha)).
\end{equation} 
\end{corollary}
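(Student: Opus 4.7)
The plan is to establish the bijection by writing down the inverse map explicitly, checking it lands in $\cR(X)^\dagger$, and then verifying that the two compositions are identities, invoking Lemma \ref{lm:vchar} heavily.

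First I would verify that the proposed inverse map is well defined: given $\alpha\in\mbs_X$ and $\chi\in\sigma(\cR(X/\alpha))$, the composition $\chi\tau_\alpha:\cR(X)\to\C$ is a morphism since $\tau_\alpha$ is a morphism \eqref{eq:talp} and $\chi$ is a character, and it is nonzero because $\tau_\alpha$ is surjective onto $\cR(X/\alpha)$. To see that $\chi\tau_\alpha$ lies in the boundary $\cR(X)^\dagger$ rather than corresponding to a point of $X$, I would use the characterization \eqref{eq:cadag}: for $\varphi\in C_0(X)$ and any $a\in\alpha$, the formula \eqref{eq:talpha} gives $\tau_\alpha(\varphi)(x)=\lim_{r\to\infty}\varphi(ra+x)=0$, hence $\chi\tau_\alpha$ annihilates $C_0(X)$.

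Next I would check one composition. Starting from $\vkappa\in\cR(X)^\dagger$ and passing through $(\alpha_\vkappa,\what\vkappa)$, Lemma \ref{lm:vchar}(3) tells us exactly that $\vkappa=\what\vkappa\,\tau_{\alpha_\vkappa}$, which is the inverse map applied to the pair. So one composition is the identity by direct citation.

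For the other composition, I would start with a pair $(\alpha,\chi)$, form $\vkappa=\chi\tau_\alpha$, and show $\alpha_\vkappa=\alpha$ and $\what\vkappa=\chi$. For the first equality, I restrict $\vkappa$ to $C(\oX)$: if $u\in C(\oX)$ then $\tau_\alpha(u)(x)=\lim_{r\to\infty}u(ra+x)=u(\alpha)$ is the constant function with value $u(\alpha)$, so $\vkappa(u)=\chi(u(\alpha)\cdot 1)=u(\alpha)$, i.e.\ $\vkappa|C(\oX)$ is the character $\alpha$, giving $\alpha_\vkappa=\alpha$ by uniqueness in Lemma \ref{lm:vchar}(1). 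For the second equality, I restrict $\vkappa$ to $\cR(X/\alpha)$: by the projection property in \eqref{eq:talp}, $\tau_\alpha(v)=v$ for $v\in\cR(X/\alpha)$, so $\vkappa(v)=\chi(v)$, yielding $\what\vkappa=\chi$.

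The main obstacle I anticipate is very mild: it lies in the first composition, which rests entirely on Lemma \ref{lm:vchar}(3). That lemma already contains the deep content (uniqueness of the extension of a pair of compatible characters to $\cR(X)$, via \cite[Cor.\ 6.8]{GN}); beyond invoking it, the rest of the corollary is a formal unpacking of definitions. The only subtlety to watch is that $\sigma(\cR(X/\alpha))$ should be understood as including point-evaluations in $X/\alpha$ as well as characters in the boundary $\cR(X/\alpha)^\dagger$, so the disjoint union on the right of \eqref{eq:vchar} is genuinely parameterizing all of $\cR(X)^\dagger$ (including those $\vkappa$ whose $\what\vkappa$ is concentrated at a finite point of $X/\alpha$).
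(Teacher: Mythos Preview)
Your proposal is correct and is precisely the argument the paper has in mind: the paper gives no separate proof of the corollary, treating it as an immediate consequence of Lemma~\ref{lm:vchar}, and your write-up simply unpacks that implication, using part~(3) for one composition and the projection property~\eqref{eq:talp} together with the computation of $\tau_\alpha|C(\oX)$ for the other.
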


From Theorem \ref{th:GIA}, relation \eqref{eq:vchar}, and (5) of Lemma
\ref{lm:vchar}, we obtain for any $A\in\rr(X)$:
\begin{equation}\label{eq:hnb}
\spe (A)= \ccup_{\vkappa\in\cR(X)^\dagger} \spec(\tau_\vkappa(A))
=\ccup_{\vkappa\in\cR(X)^\dagger} 
\spec(\tau_{\what\vkappa}\tau_{\alpha_\vkappa}(A)) .
\end{equation} 
The maps $\tau_{\what\vkappa}:\rr(X)\to\rr(X/\alpha_\vkappa)$ are
morphisms, so
$\spec(\tau_{\what\vkappa}\tau_{\alpha_\vkappa}(A))\subset
\spec(\tau_{\alpha_\vkappa}(A))$.
Note that we have equality here if $\vkappa$ is such that
$\what\vkappa\in X/\alpha_\vkappa$. Then by using (2) of Lemma
\ref{lm:vchar} we obtain
\begin{equation}\label{eq:hnbb}
\spe (A)= \ccup_{\alpha\in\mbs_X} \spec(\tau_\alpha(A)) .
\end{equation} 
It remains to give a convenient expression to $\tau_\alpha(A)$.
Consider first the translation morphism
$\tau_\alpha:\cR(X)\to\cR(X/\alpha)$ associated to $\alpha$. If we
embed $\cR(X)\subset\rb(X)$ as a an algebra of multiplication
operators, then the definition \eqref{eq:talpha} implies 
\begin{equation}\label{eq:talphaop}
\tau_\alpha(u(q))=\slim_{r\to\infty} \e^{\rmi rap}u(q)\e^{-\rmi rap}.
\end{equation}
This  clearly gives for the induced morphism
$\tau_\alpha:\rr(X)\to\rr(X/\alpha)$ the formula
\begin{equation}\label{eq:talphaOP}
\tau_\alpha(A)=\slim_{r\to\infty} \e^{\rmi rap}A\e^{-\rmi rap} \quad
\forall A\in\rr(X).
\end{equation}
Finally, by using \eqref{eq:hnbb} and \eqref{eq:talphaOP} we get the
last assertion of Theorem \ref{th:GN}.

\section{Appendix}\label{s:app}
\protect\setcounter{equation}{0}

A filter on $X$ is a set $\vkappa$ of subsets which does not contain
the empty set, is stable under intersections, and satisfies
$T\supset S\in\vkappa\Rightarrow T\in\vkappa$.  If $f:X\to Y$ where
$Y$ is a topological space and if $y\in Y$ then
$\lim_{x\to\vkappa}f(x)=y$ means that $f^{-1}(V)\in\vkappa$ for any
neighborhood $V$ of $y$. A filter is finer than the Fr\'echet filter
if it contains the complements of bounded subsets.  Un ultrafilter is
a filter which is not included in any other filter.

The character space of $C_\rmb^\rmu(X)$ is called \emph{uniform
  compactification} of $X$ and is a quotient of the
\emph{Stone-\v{C}ech compactification} $\beta X$ of $X$, which is the
character space of the $C^*$-algebra $C_\rmb(X)$ of bounded continuous
functions on $X$. In turn, $\beta X$ is a quotient of the
Stone-\v{C}ech compactification $\gamma X$ of the \emph{discrete}
space $X$, which is the character space of the $C^*$-algebra of all
bounded functions on $X$, and its elements are the ultrafilters on
$X$. The analogue of $\ca^\dagger$ in this context is the set of
ultrafilters $\vkappa$ finer than the Fr\'echet filter. We denote
$ X^\dagger$ the set of all such ultrafilters.

Now we restate Theorem \ref{th:rb} without using the notion of
ultrafilters and thus make the connection with the results from
\cite{RRS0,RRS,LaS,LS} and references there.  Following
\cite{RRS0,RRS}, we use the \emph{operator spectrum of $A$}:
\begin{equation}\label{eq:opspec}
\sigma_\op(A)=\{B\in\rb \mid B=\ulim_{n\to\infty} \tau_{x_n}(A)
  \text{ for a sequence } \{x_n\} \text{ with } |x_n|\to\infty\}.  
\end{equation}

\begin{theorem}\label{th:classic}
  If $A\in\re$ then the set $\{\tau_x(A)\mid x\in X\}$ is relatively
  compact in $\re_\loc$ and $\sigma_{\op}(A)$ is a compact subset of
  $\re_\loc$. We have
  \begin{equation}\label{eq:classic}
    \spe(A)=\ccup_{B\in\sigma_{\op}(A)} \spec(B).
  \end{equation}
\end{theorem}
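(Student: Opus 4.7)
The plan is to reduce Theorem \ref{th:classic} directly to Theorem \ref{th:rb} by identifying the operator spectrum $\sigma_{\op}(A)$ with the set $\{A_\chi\mid\chi\in\cx\}$, where $\rx=\sigma(C_\rmb^\rmu(X))$ is the uniform compactification and $\cx=\rx\setminus X=\ce^\dagger$ is its boundary. By Theorem \ref{th:GIold} applied to $\ca=C_\rmb^\rmu(X)$, the map $\rx\ni\chi\mapsto A_\chi\in\re_\loc$ is continuous, so the image of the compact space $\rx$ is compact in $\re_\loc$ and contains $\{\tau_x(A)\mid x\in X\}$. This immediately gives the first assertion (relative compactness of $\{\tau_x(A)\mid x\in X\}$).

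Next I would prove $\sigma_{\op}(A)=\{A_\chi\mid\chi\in\cx\}$. For the inclusion $\subseteq$: given $B=\ulim_n\tau_{x_n}(A)$ with $|x_n|\to\infty$, compactness of $\rx$ gives a subnet (or subsequence, see below) with $x_{n_k}\to\chi$ in $\rx$, and $\chi\in\cx$ because $\varphi(x_n)\to 0$ for every $\varphi\in C_0(X)$, forcing $\chi(\varphi)=0$; continuity then yields $B=A_\chi$. The nontrivial inclusion $\supseteq$ is the main obstacle, since $\rx$ is typically not first countable and I must recover \emph{sequences}. Here I would exploit the crucial fact that the local norm topology restricted to bounded subsets of $\re$ is metrizable: fixing a strictly positive $\theta\in C_0(X)$, the metric $d(S,T)=\|(S-T)\theta(q)\|$ induces this topology on $\{S\in\re\mid\|S\|\leq\|A\|\}$, and $\|\tau_x(A)\|=\|A\|$. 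Given $\chi\in\cx$, the preimages $U_\veps=\{\chi'\in\rx\mid d(A_{\chi'},A_\chi)<\veps\}$ are open neighborhoods of $\chi$, and I would observe that every such neighborhood meets $\{x\in X\mid |x|>n\}$ for every $n$: if $B\subset X$ is bounded, picking $\varphi\in C_0(X)$ with $\varphi|_B=1$ gives $\chi(\varphi)=1$, so $\chi\notin\overline{B}^{\,\rx}$. Hence I can inductively select $x_n\in X$ with $|x_n|>n$ and $d(A_{x_n},A_\chi)<1/n$, and then $\tau_{x_n}(A)\to A_\chi$ locally in norm, so $A_\chi\in\sigma_{\op}(A)$.

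With this identification in hand, compactness of $\sigma_{\op}(A)$ follows: it is the continuous image of the compact set $\cx$ under $\chi\mapsto A_\chi$ in $\re_\loc$. For the spectral formula, I invoke Theorem \ref{th:rb}, which gives $\spe(A)=\bigcup_{\vkappa\in X^\dagger}\spec(A_\vkappa)$. By Remark \ref{re:cbu}, for every $\chi\in\cx$ there exists $\vkappa\in X^\dagger$ with $\chi(\varphi)=\lim_{x\to\vkappa}\varphi(x)$ on all of $C_\rmb^\rmu(X)$, and conversely every ultrafilter $\vkappa\in X^\dagger$ defines such a character $\chi\in\cx$ (the limit along $\vkappa$ annihilates $C_0(X)$ because $\vkappa$ is finer than the Fr\'echet filter). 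Therefore $\{A_\vkappa\mid\vkappa\in X^\dagger\}=\{A_\chi\mid\chi\in\cx\}=\sigma_{\op}(A)$, and \eqref{eq:classic} follows from \eqref{eq:rb}. The substantive work is thus concentrated in the sequence-extraction step, for which the metrizability of $\re_\loc$ on bounded sets is indispensable.
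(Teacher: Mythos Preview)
Your proof is correct and follows essentially the same route as the paper's: both reduce everything to the identification $\sigma_{\op}(A)=\{A_\vkappa\mid\vkappa\in X^\dagger\}$ (you phrase it via characters $\chi\in\cx$, which is equivalent by Remark~\ref{re:cbu}), use metrizability of the local norm topology to produce the required sequences for the $\supseteq$ inclusion, and then invoke Theorem~\ref{th:rb} and Lemma~\ref{lm:topology}/Theorem~\ref{th:GIold} for compactness and the spectral formula. One small slip in your $\supseteq$ step: you write ``gives $\chi(\varphi)=1$'', but $\chi(\varphi)=0$ since $\chi\in\cx$; what you mean is that every $\chi'\in\overline{B}^{\,\rx}$ satisfies $\chi'(\varphi)=1$, whence $\chi\notin\overline{B}^{\,\rx}$---the conclusion is correct.
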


\begin{proof}
  It suffices to show that
  $\sigma_\op(A)=\{A_\vkappa\mid\vkappa\in X^\dagger\}$, cf.\ Theorem
  \ref{th:GIold} and Lemma \ref{lm:topology}. If
  $\vkappa\in X^\dagger$ then $A_\vkappa=\ulim_{x\to\vkappa}\tau_x(A)$
  by \eqref{eq:tchar}.  Since $\rb_\loc$ is metrizable, there is a
  metric $d$ which defines its topology and then for each integer
  $n>0$ there is $x_n$ with $|x_n|>n$ and
  $d(\tau_{x_n}(A),A_\vkappa)<1/n$, hence
  $A_\vkappa\in\sigma_\op(A)$. Reciprocally, if $B\in\sigma_\op(A)$
  then there is  a sequence $\{x_n\}$  with $|x_n|\to\infty$ such that
  $B=\ulim_{n\to\infty} \tau_{x_n}(A)$. Let $\vkappa$ be the set of
  subsets $F$ of $X$ with the property: there is $N$ such that $x_n\in
  F$ for all $n>N$. Clearly $\vkappa$ is an ultrafilter finer than the
  Fr\'echet filter and $B=\ulim_{x\to\vkappa}\tau_x(A)=A_\vkappa$.
\end{proof}

\newpage

\end{document}